\documentclass{amsart}
\usepackage[margin=2.5cm]{geometry}
\usepackage{amssymb,amsmath,upgreek,mathrsfs,bm,cases,latexsym,hyperref,color,enumerate,graphicx,algorithm,algorithmic}

\hypersetup{colorlinks,linkcolor={blue},citecolor={blue},urlcolor={blue},hypertexnames=false}

\newtheorem{theorem}{ \bf Theorem}[section]
\newtheorem{proposition}{ \bf Proposition}[section]
\newtheorem{lemma}{ \bf Lemma}[section]
\newtheorem{remark}{ \bf Remark}[section]
\newtheorem{definition}{ \bf Definition}[section]

\newtheorem{problem}{\bf Problem}
\newtheorem{example}{ \bf Example}[section]

\numberwithin{equation}{section}
\allowdisplaybreaks[4]
\begin{document}

	\title[Sensitivity and Robust Control of Coupled Inclusions]{Sensitivity Analysis and Robust Optimal Control for Coupled Evolution Inclusions with State-Dependent Maximal Monotone Operators}

\author[J. Du]{Jinsheng Du}
\address[J. Du]{School of Mathematical Sciences, Chongqing Normal University, Chongqing 401331, China}
\email{jinshengdu@foxmail.com}

\author[B. Mordukhovich]{Boris Mordukhovich}
\address[B. Mordukhovich]{Wayne State University, Detroit, Michigan 48202}
\email{aa1086@wayne.edu}

		\author[S. Zeng]{Shengda Zeng}
\address[S. Zeng]{National Center for Applied Mathematics in Chongqing\\ School of Mathematical Sciences, Chongqing Normal University, Chongqing 401331, China}
\email[Corresponding author]{zengshengda@163.com}
	
	\begin{abstract}
		We consider a class of strongly coupled nonsmooth systems consisting of a semilinear evolution inclusion and a differential inclusion governed by state-dependent maximal monotone operators. Our main contributions are fourfold. First, we collect the well-posedness, compactness, and Painlev\'e--Kuratowski continuity properties of the parameterized solution map required for the subsequent optimization analysis. Second, for Bolza-type optimization over the solution set, we prove the existence of optimal pairs, establish continuity properties of the value function, and derive upper semicontinuity of the optimal-solution map. Third, we study fixed-parameter optimal control, simultaneous control-parameter design, min--max robust control, and Hurwicz-type compromise control under parameter uncertainty, and we establish existence results for each formulation. Fourth, we report numerical experiments for sweeping-type systems that illustrate the sensitivity and robustness phenomena predicted by the theory.
	\end{abstract}

	\keywords{Coupled evolution inclusions; State-dependent maximal monotone operators; Bolza optimal control; Sensitivity analysis; Painlev\'e--Kuratowski convergence; Robust min-max control; Hurwicz criterion; Differential sweeping processes}

	\maketitle

	\section{Introduction}\label{s1}

	We develop an optimization theory for a class of strongly coupled nonsmooth dynamical systems in which a semilinear evolution inclusion is coupled with a differential inclusion driven by a state-dependent maximal monotone operator. Let $E$ and $H$ be real separable Hilbert spaces and let $I := [0, b]$ ($b > 0$). The basic model takes the following form.
	
	\begin{problem}\label{p1}
		Find $x: I \to E$ and $u: I \to H$ satisfying
		\begin{equation}\label{e1.01}
			\left\{\begin{array}{lll}
				\dot{x}(t) \in A x(t) + F(t, x(t), u(t)) &\text{ for a.e. } t \in I, \\
				\dot{u}(t) \in -B(t,u(t))(u(t)) + G(t, x(t), u(t)) &\text{ for a.e. } t \in I, \\
				x(0) = x_0,\quad u(0) = u_0 \in D\left(B(0, u_0)\right) .
			\end{array}\right.
		\end{equation}
		Here, $A: D(A) \subset E \to E$ denotes the infinitesimal generator of a $C_0$-semigroup $\{T(t)\}_{t \geq 0}$ in $E$. The mapping $B: I \times H \rightrightarrows H$ is such that, for each $(t,u) \in I \times H$, the operator $B(t,u): D(B(t,u)) \subset H \rightrightarrows H$ is maximal monotone and varies with respect to the pseudo-distance. The perturbations $F: I \times E \times H \rightrightarrows E$ and $G: I \times E \times H \rightrightarrows H$ are allowed to be multivalued.
	\end{problem}
	
	Problem~\ref{p1} subsumes several important classes of nonsmooth dynamical systems. When the set-valued mappings $F$ and $G$ reduce to single-valued functions $f$ and $g$, and the operator $B(t,u)$ is identified with the normal cone $N_{C(t,u)}(\cdot)$ to a closed convex set $C(t,u) \subset H$, the inclusion \eqref{e1.01} specializes to the \emph{state-dependent differential sweeping process}
	\begin{equation}\label{e1.01sp}
	\left\{\begin{array}{lll}
		\dot{x}(t) = A x(t) + f(t, x(t), u(t)), \\[2pt]
		-\dot{u}(t) \in N_{C(t, u(t))}(u(t)) + g(t, x(t), u(t)), \\[2pt]
		x(0) = x_0, \quad u(0) = u_0 \in C(0, u_0).
	\end{array}\right.
	\end{equation}
	Since Moreau's pioneering work \cite{moreau1971rafle,moreau1977evolution}, sweeping processes have become indispensable models for quasi-static evolution problems in friction, elastoplasticity, and granular material flow, where the state-dependent constraint set $C(t,u(t))$ represents geometric constraints or yield surfaces depending on the current configuration; see Kunze and Monteiro Marques \cite{kunze1998parabolic} for foundational well-posedness results.

The state dependence in \eqref{e1.01sp} is essential both for modeling and for control: it represents feedback from the current configuration to the constraint that governs its subsequent motion, rather than prescribing the moving set exogenously. Antil, Arndt, Mordukhovich, Nguyen, and Rautenberg \cite{antil2026optimal} provide a concrete example in granular-material accumulation, where the admissible gradient constraint depends on the evolving pile and the supporting surface, while the supporting surface is itself a control variable. This feedback produces a quasi-variational sweeping process, introduces a fixed-point structure, and can destroy the convexity and sequential closedness properties used by the direct method in standard controlled sweeping problems. In our framework, the same mechanism is encoded by the state-dependent operator $u\mapsto B(t,u)$, or equivalently by $C(t,u)$ in \eqref{e1.01sp}. It is precisely why pseudo-distance control, compactness, and Painlev\'e--Kuratowski continuity of the parameterized solution sets are needed before the joint-design, min--max, and Hurwicz formulations can be analyzed.
	
	More generally, when $B(t,u)$ admits a variational structure, i.e., $B(t,u)=\partial\varphi(t,u;\cdot)$ for a proper, lower semicontinuous, convex functional $\varphi(t,u;\cdot): H\to\mathbb{R}\cup\{+\infty\}$, and $F$, $G$ are single-valued, the second line of \eqref{e1.01} is equivalent to the \emph{differential variational inequality}
	\begin{equation}\label{e1.01vi}
	\left\{\begin{array}{lll}
		\dot{x}(t) = A x(t) + f(t, x(t), u(t)), \\[2pt]
		\langle \dot{u}(t) - g(t, x(t), u(t)),\, v - u(t) \rangle
		+ \varphi(t, u(t); v) - \varphi(t, u(t); u(t)) \geq 0
		\quad \forall\, v \in H, \\[2pt]
		x(0) = x_0, \quad u(0) = u_0 \in \operatorname{dom}(\varphi(0, u_0; \cdot)).
	\end{array}\right.
	\end{equation}
	Such differential variational inequalities arise in nonsmooth contact mechanics, traffic network equilibrium, and electrical circuit systems with ideal diodes; systematic development of DVIs was initiated by Pang and Stewart \cite{pang2008differential}, and has been extended to the infinite-dimensional setting by Liu, Motreanu, and Zeng \cite{liu2018nonlinear} and Mig\'orski, Sofonea, and Zeng \cite{migorski2019well}.
	
	\smallskip
	The well-posedness and Painlev\'{e}--Kuratowski sensitivity analysis for Problem~\ref{p1} have been established in the companion paper \cite{zengduwang2026submitted}. Building on those results, the present work develops the associated optimization theory: we study Bolza-type cost functionals evaluated on the parameterized solution sets of \eqref{e1.01} and, in the controlled setting, on solution sets that additionally depend on external control inputs.
	
	To formulate these questions, let $(\Lambda, d_\Lambda)$ and $(\Xi, d_\Xi)$ be metric parameter spaces associated with perturbations of the $x$-subsystem and the $u$-subsystem, respectively. For each $(\zeta,\eta) \in \Lambda \times \Xi$, we consider the parameterized system
	
	\begin{problem}\label{p2a}
		Find pairs $(x, u)$ with $x: I \to E$ and $u: I \to H$ such that
		\begin{equation}\label{e1.01a}
			\left\{\begin{aligned}
				\dot{x}(t) &\in A_\zeta x(t) + F(t, x(t), u(t), \zeta), && \text{for a.e. } t \in I, \\
				\dot{u}(t) &\in -B(t, u(t), \eta)(u(t)) + G(t, x(t), u(t), \eta), && \text{for a.e. } t \in I, \\
				x(0) &= x_0(\zeta), \\
				u(0) &= u_0(\eta) \in D(B(0, u_0(\eta), \eta)).
			\end{aligned}\right.
		\end{equation}
	\end{problem}
	On the corresponding admissible solution set $\mathcal{S}(\zeta,\eta)$, we study the Bolza-type functional
	\begin{equation}\label{e1.02}
		\mathcal{J}(x, u, \zeta, \eta)
		:= \int_0^b \ell(t, x(t), u(t), \zeta, \eta) \, dt
		+ \varphi(x(b), u(b), \zeta, \eta).
	\end{equation}
	To incorporate an external control, let $Y$ be a real separable Hilbert space, let $\mathcal{B}: Y \to E$ be a bounded linear operator, and let $W: I \rightrightarrows Y$ be a control constraint map. We define the admissible-control set by
	$$
	\mathcal{W}_{ad} := \left\{ w \in L^2(I, Y) \mid w(t) \in W(t) \text{ for a.e. } t \in I \right\}.
	$$
	For $w \in \mathcal{W}_{ad}$ and $(\zeta,\eta) \in \Lambda \times \Xi$, we also consider the controlled system
	\begin{problem}\label{p2b}
		For fixed $w \in \mathcal{W}_{ad}$ and $(\zeta,\eta) \in \Lambda \times \Xi$, find pairs $(x, u)$ with $x: I \to E$ and $u: I \to H$ such that
		\begin{equation}\label{e1.03}
			\left\{\begin{aligned}
				\dot{x}(t) &\in A_\zeta x(t) + F(t, x(t), u(t), \zeta) + \mathcal{B}w(t), && \text{for a.e. } t \in I, \\
				\dot{u}(t) &\in -B(t, u(t), \eta)(u(t)) + G(t, x(t), u(t), \eta), && \text{for a.e. } t \in I, \\
				x(0) &= x_0(\zeta), \\
				u(0) &= u_0(\eta) \in D(B(0, u_0(\eta), \eta)).
			\end{aligned}\right.
		\end{equation}
	\end{problem}
	Problem~\ref{p2a} provides the parameterized feasible-set model for the Bolza problem studied in Section~\ref{s5}, while Problem~\ref{p2b} is the controlled model underlying Section~\ref{s6}. In Section~\ref{s5} we prove continuity of the value function and upper semicontinuity of the optimal-solution map for the Bolza problem on $\mathcal{S}(\zeta,\eta)$; in Section~\ref{s6} we establish existence results for nominal, joint-design, robust, and Hurwicz-type control formulations.
	
	\subsection{Motivating Problem Classes}
	
	The optimization models studied here are motivated by three classical paradigms from decision theory and control design.
	
	\subsubsection{Joint Design and Control Optimization}
	In many applications, the control and the system parameters are not chosen separately; instead, one seeks a coordinated design minimizing a common performance index. This viewpoint originates in the simultaneous system-design framework of Ghosh \cite{ghosh1986simultaneous,ghosh1988simultaneous} and connects to the bilevel-programming perspective of Dempe \cite{dempe2002bilevel} and the Moreau-envelope-based bilevel algorithms of Bai, Zeng, Zhang, and Zhang \cite{bai2026alternating}. In the present framework, the external control $w$ and the structural-parameter pair $(\zeta,\eta)$ are optimized simultaneously over a feasible set generated by the coupled inclusion. Letting $\mathcal{P}\subset\Lambda\times\Xi$ denote a compact admissible design--parameter set, we study the simultaneous problem
	$$
	\inf_{(w,\,\zeta,\,\eta)\,\in\,\mathcal{W}_{ad}\times\mathcal{P}}
	\bigl\{m_{\zeta,\eta}(w)+\mathcal{R}(w)\bigr\},
	$$
	where $m_{\zeta,\eta}(w)$ is the optimal value of the Bolza state--cost over the trajectory set $\mathcal{S}_{\zeta,\eta}(w)$ and $\mathcal{R}$ is a control regularization term; see Problem~\ref{p6} in Section~\ref{s6}.
	
	\subsubsection{Robust Min--Max Control Under Uncertainty}
	When the parameters are not design variables but uncertain quantities ranging over a prescribed uncertainty set, a natural formulation is a worst-case or min--max control problem. This perspective has a long history in control and optimization, encompassing the Hamilton--Jacobi--Bellman approach to minimax control \cite{barron1995relaxed,vinter2005minimax}, the robust optimization paradigm \cite{ben2009robust}, and the $H^\infty$ design theory \cite{basar2008h,zhou1996robust}; recent algorithmic and structural advances can be found in \cite{scagliotti2025minimax,bot2023alternating,tsaknakis2023minimax,xu2024derivative,delfour2025semidifferential,zhang2023sion}; see Subsection~1.2 for a detailed account. In the present paper, this leads to a robust control problem in which the controller is selected against the most adverse admissible parameter realization. Specifically, with the same compact uncertainty set $\mathcal{P}$, we formulate the min--max problem
	$$
	\inf_{w\,\in\,\mathcal{W}_{ad}}
	\sup_{(\zeta,\,\eta)\,\in\,\mathcal{P}}
	\bigl\{m_{\zeta,\eta}(w)+\mathcal{R}(w)\bigr\},
	$$
	where the outer minimization selects the control while the inner maximization captures the worst-case parameter realization; see Problem~\ref{p7} in Section~\ref{s6}.
	
	\subsubsection{Hurwicz-Type Compromise Criteria}
	The Hurwicz criterion interpolates between best-case and worst-case evaluations through an optimism index $\alpha \in [0,1]$, providing a one-parameter family that bridges optimistic design and robust control. Introduced in a decision-theoretic context by Arrow and Hurwicz \cite{arrow1977optimality} and axiomatized by Ghirardato, Maccheroni, and Marinacci \cite{ghirardato2004differentiating}, the criterion reduces for $\alpha=0$ to the max-min expected utility of Gilboa and Schmeidler \cite{gilboa1989maxmin}. Uncertain optimal control under this criterion has been studied in finite-dimensional settings by Sheng, Zhu, and Hamalainen \cite{sheng2013uncertain}, Deng and Shen \cite{deng2021hurwicz}, and Li, Song, Liu, and Alsaadi \cite{li2022nash}. In the present setting, this yields a compromise control model interpolating between optimistic and worst-case parameter selection while preserving the same coupled dynamical feasibility structure. Given $\alpha\in[0,1]$, the Hurwicz objective is
	$$
	\inf_{w\,\in\,\mathcal{W}_{ad}}
	\Bigl\{\alpha\inf_{(\zeta,\eta)\in\mathcal{P}}m_{\zeta,\eta}(w)
	+(1-\alpha)\sup_{(\zeta,\eta)\in\mathcal{P}}m_{\zeta,\eta}(w)
	+\mathcal{R}(w)\Bigr\},
	$$
	so that $\alpha=0$ recovers the robust formulation and $\alpha=1$ reduces to the optimistic design; see Problem~\ref{p8} in Section~\ref{s6}.
	
	\subsection{Related Work and Gaps}
	
	The main analytical difficulty is that the admissible set is itself the solution set of a coupled inclusion whose operator geometry is endogenous. For a comprehensive account of dynamical systems coupled with monotone set-valued operators we refer to the survey by Brogliato and Tanwani \cite{brogliato2020dynamical}. For time-dependent maximal monotone operators, the pseudo-distance introduced by Vladimirov \cite{vladimirov1991nonstationary,vladimirov1990differential} has become a standard tool for controlling moving domains, and related well-posedness and approximation results appear in \cite{azzam2018perturbed,azzam2019perturbed,vilches2020evolution}. In the state-dependent case, the geometry of the monotone part depends on the evolving state and hence on the companion equation. Progress in this direction includes the well-posedness and invariant-set results of Le \cite{le2020well}, the nonsmooth Lyapunov framework of Adly, Hantoute, and Th\'era \cite{adly2016nonsmooth}, and existence results for coupled state-dependent systems obtained by Selamnia, Azzam-Laouir, and Monteiro Marques \cite{selamnia2022evolution} and Benguessoum, Azzam-Laouir, and Castaing \cite{benguessoum2021time}.
	
	On the optimization side, a substantial body of results is now available for nonsmooth sweeping-type systems. Relaxed optimal control for perturbed sweeping processes was studied by Edmond and Thibault \cite{edmond2005relaxation}. A major line of research, initiated by Colombo, Mordukhovich, and Nguyen \cite{colombo2019optimal,colombo2020optimization}, develops existence, discrete approximations, and necessary optimality conditions for controlled sweeping processes; extensions to nonconvex, polyhedral, and integro-differential settings appear in \cite{bouach2022optimal,cao2017optimality,cao2019optimal,henrion2023controlled}. Maximum-principle-based results have been obtained by Pinho, Ferreira, and Smirnov \cite{pinho2023maximum}, and sensitivity results for second-order evolution subdifferential inclusions by Bartosz, Denkowski, and Kalita \cite{bartosz2015sensitivity}.

	A recent study \cite{antil2026optimal} examined optimal control of a quasi-variational sweeping model for granular materials. Using regularization together with space and time discretization, it established the existence of optimal solutions for a time-continuous, space-discrete problem and derived necessary optimality conditions for a fully discrete problem. These results demonstrate both the applied relevance of state-dependent constraints and the additional analytical work they require. The present paper is complementary: instead of a model-specific gradient constraint and a discrete optimality system, we treat abstract infinite-dimensional coupled inclusions governed by state-dependent maximal monotone operators and establish solution-set, value-function, and optimal-set sensitivity, as well as existence for fixed-parameter control, joint design, robust min--max control, and Hurwicz compromise control.
	
	For time-dependent maximal monotone dynamics, Hermosilla and Palladino \cite{hermosilla2022optimal} and Sa\"{\i}di and Yarou \cite{saidi2017control} established existence of optimal controls. In the parameter-dependent setting, Papageorgiou, R\u{a}dulescu, and Repov\v{s} \cite{papageorgiou2017sensitivity} proved Hadamard well-posedness of the associated optimal control problems, Adly and Zakaryan \cite{adly2019sensitivity} treated generalized Bolza and Mayer problems for parametric nonconvex sweeping processes, and Adly and Rockafellar \cite{adly2021sensitivity} developed a second-order sensitivity analysis via proto-differentiability of resolvents. Within the variational-hemivariational framework, Li and Liu \cite{li2018sensitivity}, Zeng, Mig\'orski, and Liu \cite{zeng2021well}, and Zeng, Mig\'orski, and Khan \cite{zeng2021nonlinear} obtained existence, compactness, sensitivity, and optimal-control results.
	
	Despite this progress, the existing literature does not cover the combination of features addressed here, in particular the three optimization paradigms introduced in Subsection~1.1. We highlight the principal gaps.
	
	\smallskip\noindent\textit{Joint design.} The simultaneous system-design literature (e.g., Ghosh \cite{ghosh1986simultaneous,ghosh1988simultaneous}) addresses finite-dimensional plant--controller co-design for linear systems. Recent bilevel optimization algorithms, such as the Moreau envelope--based approach of Bai, Zeng, Zhang, and Zhang \cite{bai2026alternating}, provide efficient gradient-type methods for hierarchical design problems but are restricted to finite-dimensional settings. In the infinite-dimensional setting, the available works optimize either the control for fixed parameters \cite{colombo2020optimization,hermosilla2022optimal} or study parameter sensitivity of a fixed control structure \cite{adly2019sensitivity,papageorgiou2017sensitivity}. No existing framework treats joint optimization of controls and structural parameters when the admissible set is generated by a coupled evolution inclusion with state-dependent maximal monotone operators.
	
	\smallskip\noindent\textit{Robust min--max control.} Classical minimax control theory (Barron and Jensen \cite{barron1995relaxed}, Vinter \cite{vinter2005minimax}) is developed in a Hamilton--Jacobi--Bellman/viscosity-solution setting for ODE dynamics, while the robust optimization framework of Ben-Tal, El Ghaoui, and Nemirovski \cite{ben2009robust} addresses finite-dimensional mathematical programs. The recent sweeping-process and monotone-operator control results cited above all treat \emph{nominal} (fixed-parameter) problems. Minimax sliding-mode designs for linear evolution equations have been developed by Zhuk, Iftime, Epperlein, and Polyakov \cite{zhuk2021minimax}, and minimax control-affine ensemble problems by Scagliotti \cite{scagliotti2025minimax}. Algorithmic advances for nonconvex-concave minimax formulations appear in \cite{bot2023alternating,delfour2025semidifferential,tsaknakis2023minimax,xu2024derivative,zhang2023sion}. All of these results, however, rely on linearity, finite-dimensional structure, or standard ODE dynamics rather than monotone-operator geometry. No existing work provides a worst-case performance guarantee over a parameter uncertainty set for dynamics governed by state-dependent monotone operators.
	
	\smallskip\noindent\textit{Hurwicz compromise.} The Hurwicz criterion \cite{arrow1977optimality} is a classical tool in decision theory under ignorance. It has been applied to uncertain optimal control for ODE systems \cite{sheng2013uncertain,deng2021hurwicz} and to multi-player differential games \cite{li2022nash}, but all existing results assume finite-dimensional ODE dynamics or uncertainty-theory frameworks. To the best of our knowledge, it has not been applied to optimal control of any infinite-dimensional evolution system, and \emph{a fortiori} not to coupled inclusions with parameter-dependent feasibility.
	
	\smallskip
	More fundamentally, in \eqref{e1.01a} the maximal monotone operator depends on the current state, the two subsystems are mutually coupled, and optimization is performed over a parameter-dependent family of solution sets rather than a fixed feasible set. Because of this endogenous geometry, continuity of the value function, upper semicontinuity of optimal pairs, and existence of solutions for all three formulations above do not follow from the available theory for time-dependent operators, classical minimax control, variational-hemivariational inclusions, or standard perturbation analysis of optimization problems \cite{bonnans2000perturbation}.
	
	\subsection{Contributions}
	
	The aim of this paper is to develop an optimization theory for the class of coupled inclusions described above. The well-posedness and sensitivity results collected in the earlier sections serve as prerequisites; the main focus is the parameter-dependent and control-dependent optimization layer developed in Sections~\ref{s5} and~\ref{s6}. Our principal contributions are as follows:
	\begin{itemize}
		\item We formulate parameter-dependent Bolza optimization problems over solution sets of coupled evolution inclusions with state-dependent maximal monotone operators.
		\item We identify the well-posedness, compactness, and Painlev\'e--Kuratowski continuity properties of the solution map that are needed to transfer trajectory stability to optimization stability.
		\item We prove existence of optimal pairs, continuity of the value function, and upper semicontinuity of the optimal-solution map for the parameterized Bolza problem.
		\item We extend the framework to controlled dynamics and establish existence results for fixed-parameter optimal control, simultaneous control-parameter design, min--max robust control, and Hurwicz-type compromise control.
		\item We support the abstract theory with numerical experiments on sweeping-type systems illustrating sensitivity, nominal control, joint design, and robust behavior.
	\end{itemize}
	
\subsection{Notation}\label{s1.5}

	Throughout the paper, $E$ and $H$ denote real separable Hilbert spaces with inner products $\langle\cdot,\cdot\rangle_E$, $\langle\cdot, \cdot\rangle$ and norms $\|\cdot\|_E$, $\|\cdot\|$, respectively, and $I := [0, b]$ ($b > 0$) is a fixed finite interval. We write $Z$ for either $E$ or $H$ as a generic placeholder. The closed balls of radius $r > 0$ in $E$ and $H$ are denoted by $\mathbb{B}_E(x, r)$ and $\mathbb{B}(u, r)$; the unit balls by $\mathbb{B}_E$ and $\mathbb{B}_H$. For $1 \leq p < \infty$, $L^p(I, Z)$ is the space of Bochner integrable functions $w\colon I \to Z$ with norm
	$$
	\|w\|_{L^p(I, Z)} = \left(\int_0^b\|w(s)\|_Z^p\, d s\right)^{1 / p}.
	$$
	$C(I, Z)$ is the space of continuous functions from $I$ to $Z$ equipped with the supremum norm
	$$
	\|w\|_{C(I, Z)} = \sup_{t \in I}\|w(t)\|_Z.
	$$
	The Sobolev space $W^{1,1}(I, Z)$ consists of absolutely continuous functions $w\colon I \to Z$ satisfying
	$$
	w(t) = w(0) + \int_0^t v(s)\, d s \quad \text{for all } t \in I,
	$$
	for some $v \in L^1(I, Z)$, with $\dot{w} = v$ a.e.\ on $I$. For a nonempty closed convex set $S \subset H$, $d_S(u) := \inf_{v \in S}\|u - v\|$ denotes the distance from $u$ to $S$, and $\operatorname{proj}_S(u)$ the metric projection. A subset $S \subset H$ is said to be ball-relatively compact if $S \cap \overline{\mathbb{B}(0,r)}$ is relatively compact in $H$ for every $r > 0$.

\subsection{Outline of the paper}
	The remainder of the paper is organized as follows. Section~\ref{s2} reviews preliminary material on maximal monotone operators, Kuratowski set limits, and multivalued mappings. Section~\ref{s3} collects the standing assumptions and the well-posedness and parameter-sensitivity results needed for the admissible-set analysis. Section~\ref{s5} studies the sensitivity of optimal values and optimal pairs for the Bolza problem \eqref{e1.02}. Section~\ref{s6} develops the external-control layer, including joint design, robust, and Hurwicz compromise formulations. Section~\ref{s10} presents numerical experiments on sweeping-type benchmarks. Finally, Section~\ref{s11} concludes the paper and outlines directions for future work.

	\section{Preliminaries}\label{s2}
	
	This section collects the operator-theoretic and set-valued analysis tools used in the subsequent sections. The general notation and function-space conventions were fixed in Subsection~\ref{s1.5}.
	
	We begin with maximal monotone operators; see \cite{alber2006nonlinear,barbu2010nonlinear,kunze1997bv} for comprehensive accounts. An operator $B: D(B) \subset H \rightrightarrows H$ is monotone if
	$$
	\langle y_1 - y_2, x_1 - x_2 \rangle \geq 0 \quad \text{for all } y_i \in B(x_i), i=1,2.
	$$
	$B$ is maximal monotone if its graph is not properly contained in that of any other monotone operator, or equivalently, if $R(I_H + \lambda B) = H$ for all $\lambda > 0$. If $B$ is maximal monotone, $\overline{D(B)}$ is convex, and for any $x \in D(B)$, the value $B(x)$ is closed and convex, containing a unique minimal-norm element $B^0(x) := \operatorname{proj}_{B(x)}(0)$.
	Given two maximal monotone operators $B_1$ and $B_2$, their pseudo-distance is defined as
	$$
	\operatorname{dis}(B_1, B_2) = \sup_{\substack{x \in D(B_1), y \in B_1(x) \\ x' \in D(B_2), y' \in B_2(x')}} \frac{\langle y - y', x' - x \rangle}{1 + \|y\| + \|y'\|}.
	$$
	This pseudo-distance is symmetric, takes values in $[0, \infty]$, and $\operatorname{dis}(B_1, B_2)=0$ if and only if $B_1 = B_2$.
	
	The following closedness property for sequences of maximal monotone operators will be used later (see \cite{kunze1997bv}).

	\begin{lemma}\label{l2.02}
		Let $B$ and a sequence $\{B_n\}_{n \in \mathbb{N}}$ of maximal monotone operators on $H$ be such that $\operatorname{dis}(B_n, B) \to 0$ as $n \to \infty$. Then:
		\begin{enumerate}[{\rm (i)}]
			\item if $x_n \in D(B_n)$ with $x_n \to x$ strongly in $H$, and $y_n \in B_n(x_n)$ with $y_n \rightharpoonup y$ weakly in $H$, then $x \in D(B)$ and $y \in B(x)$;
			
			\item if there exists a constant $M > 0$ such that $\|B_n^0(x)\| \leq M(1 + \|x\|)$ for all $x \in D(B_n)$ and all $n \in \mathbb{N}$, then for every $y \in D(B)$, there exists a sequence $\{y_n\}_{n \in \mathbb{N}}$ with $y_n \in D(B_n)$ such that $y_n \to y$ and $B_n^0(y_n) \to B^0(y)$ in $H$;
			
			\item if $x \in D(B)$ and $y \in H$ satisfy $\langle B^0(z) - y, z - x \rangle \geq 0$ for all $z \in D(B)$, then $y \in B(x)$.
		\end{enumerate}
	\end{lemma}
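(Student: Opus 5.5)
The idea is to rest everything on one inequality that comes straight from the definition of the pseudo-distance. Fix $x_n\in D(B_n)$, $y_n\in B_n(x_n)$, and $x'\in D(B)$, $y'\in B(x')$. By definition of $\operatorname{dis}(B_n,B)$,
\begin{equation*}
\langle y' - y_n, x' - x_n\rangle \ge -\operatorname{dis}(B_n,B)\,\bigl(1+\|y_n\|+\|y'\|\bigr).
\end{equation*}

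For (i), I pass to the limit in this inequality. Since $y_n\rightharpoonup y$, the sequence $\|y_n\|$ is bounded. Together with $\operatorname{dis}(B_n,B)\to0$, this sends the right-hand side to $0$. On the left, $x'-x_n\to x'-x$ strongly while $y'-y_n\rightharpoonup y'-y$ weakly, so the pairing converges. This gives $\langle y'-y, x'-x\rangle\ge 0$ for every $(x',y')$ in the graph of $B$. By maximality of $B$, it follows that $x\in D(B)$ and $y\in B(x)$.

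For (iii), the idea is to work with the resolvent $J_\lambda=(I_H+\lambda B)^{-1}$. I plan to take $z=J_\lambda(x+\lambda y)$, so that $w:=(x+\lambda y-z)/\lambda\in B(z)$. Monotonicity applied to the pair $B^0(z)$ and $w$ would normally need $z$ and $x$ both in the graph with matching data. Instead I would use the hypothesis directly:
\begin{equation*}
\langle B^0(z)-y, z-x\rangle\ge0.
\end{equation*}
The intended argument is the standard Minty-type one. Set $z_\lambda=J_\lambda(x+\lambda y)$. Then $y-(z_\lambda-x)/\lambda\in B(z_\lambda)$. I would compare this with the minimal section via $\langle w-B^0(z_\lambda), z_\lambda-x\rangle$. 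That term is not automatically signed, so I need a better tool.

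The cleaner route is to characterize $B(x)$ as
\begin{equation*}
B(x)=\{y:\langle v-y,z-x\rangle\ge0\ \text{for all } (z,v)\in\operatorname{graph}B\}.
\end{equation*}
It then suffices to show that the graph of $B^0$, restricted to $D(B)$, is "dense enough" in the graph of $B$. Concretely, I would use the fact that for $v\in B(z)$ we have $B^0(J_\mu(z+\mu v))\to$ something as $\mu\to0$. A simpler option is to use the Yosida approximation $B_\mu$ with the points $z_\mu=J_\mu(z+\mu v)$: by construction $v\in B(z_\mu)$ is false in general. The correct classical fact is Brezis's lemma: for maximal monotone $B$, if $\langle B^0(z)-y,z-x\rangle\ge0$ for all $z\in D(B)$, then $y\in Bx$. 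It is proved by taking $z=J_\lambda(x+\lambda y)$ and using $\|B^0(z)\|\le\|w\|$, where $w=(x+\lambda y-z)/\lambda\in B(z)$. Writing $z-x=\lambda(y-w)$, the hypothesis becomes $\langle B^0(z)-y,y-w\rangle\ge0$, i.e.
\begin{equation*}
\langle B^0(z),y\rangle-\|y\|^2-\langle B^0(z),w\rangle+\langle y,w\rangle\ge0.
\end{equation*}
Since $w\in B(z)$, we have $\langle B^0(z),w\rangle\ge\|B^0(z)\|^2$, because $B^0(z)$ is the projection of $0$ onto the convex set $B(z)$. Hence
\begin{equation*}
\|B^0(z)\|^2+\|y\|^2\le\langle B^0(z)+w,y\rangle,
\end{equation*}
which yields $\|w-y\|^2\le\|w\|^2-\|B^0(z)\|^2$. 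Letting $\lambda\to0$ and using that $w-y=(x-z)/\lambda$, I intend to deduce $z\to x$ and $w\to y$ up to subsequences. Closedness of the graph under strong-weak limits then gives $y\in B(x)$.

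For (ii), I take $y\in D(B)$ and form $y_n:=J^{B_n}_{\lambda_n}(y+\lambda_nB^0(y))$ with suitable $\lambda_n\to0$ tied to $\operatorname{dis}(B_n,B)$. The linear growth bound is what controls $\|B_n^0(y_n)\|$. Weak limits are identified by (i), and strong convergence comes from the norm inequality $\limsup\|B_n^0(y_n)\|\le\|B^0(y)\|$. This is Kunze–Monteiro Marques' Lemma 2.7, and I expect it to be the main obstacle: the estimate of $\|y_n-y\|$ in terms of $\operatorname{dis}$ is the hardest step.
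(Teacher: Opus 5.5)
The paper gives no proof of this lemma; it quotes it from Kunze--Monteiro Marques. Your argument therefore has to stand on its own. Part (i) is correct and is the standard argument. The genuine gap is at the end of (iii). Your algebra is right up to $\|w-y\|^2\le\|w\|^2-\|B^0(z)\|^2$, but that inequality is equivalent to $2\langle w,y\rangle\ge\|y\|^2+\|B^0(z)\|^2$, so it gives neither a bound on $w_\lambda$ nor $w_\lambda\to y$. Its right-hand side need not tend to $0$: if $y\in B(x)$, then $z_\lambda=x$, $w_\lambda=y$, and it equals $\|y\|^2-\|B^0(x)\|^2$, which is generally positive. You also never use $x\in D(B)$, which is indispensable.

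The missing steps are these.
\begin{itemize}
\item \emph{Boundedness and the limit of $w_\lambda$.} Use monotonicity between $(z_\lambda,w_\lambda)$ and $(x,\xi)$ with $\xi\in B(x)$, together with $z_\lambda-x=\lambda(y-w_\lambda)$. This gives $\langle w_\lambda-\xi,w_\lambda-y\rangle\le0$ for all $\xi\in B(x)$, i.e.\ $w_\lambda$ lies in the ball with diameter $[\xi,y]$. Hence $w_\lambda$ is bounded and $z_\lambda\to x$. Every weak cluster point $\bar w$ lies in $B(x)$ and satisfies the same inequality, because balls are weakly closed. So $\bar w=P:=\operatorname{proj}_{B(x)}(y)$, and taking $\xi=P$ gives $\|w_\lambda-P\|^2\le\langle w_\lambda-P,y-P\rangle\to0$.
\item \emph{Using the hypothesis.} The resolvent path always tends to $P$, whatever the hypothesis says; the hypothesis is needed only to show $P=y$. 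Keep it in the undiscarded form $\langle B^0(z_\lambda)-y,w_\lambda-y\rangle\le0$. Since $\|B^0(z_\lambda)\|\le\|w_\lambda\|$, a weak cluster point $\bar b$ of $B^0(z_\lambda)$ lies in $B(x)$. Passing to the limit (weak times strong) gives $\langle\bar b-y,P-y\rangle\le0$. The projection inequality $\langle\bar b-P,y-P\rangle\le0$ then yields $\|P-y\|^2\le\langle\bar b-y,P-y\rangle\le0$, so $y=P\in B(x)$.
\end{itemize}

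Part (ii) is only announced, not proved. Your construction does work, and the step you call the hardest is one line. Set $v_n:=(y+\lambda_nB^0(y)-y_n)/\lambda_n\in B_n(y_n)$, so that $y-y_n=\lambda_n(v_n-B^0(y))$. The definition of $\operatorname{dis}$ applied to $(y_n,v_n)$ and $(y,B^0(y))$ gives $\lambda_n\|v_n-B^0(y)\|^2\le\operatorname{dis}(B_n,B)\bigl(1+\|v_n\|+\|B^0(y)\|\bigr)$. What you must specify is $\operatorname{dis}(B_n,B)/\lambda_n\to0$, for example $\lambda_n=\operatorname{dis}(B_n,B)^{1/2}+1/n$. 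Then $v_n\to B^0(y)$ and $y_n\to y$. Next, $\|B_n^0(y_n)\|\le\|v_n\|$, part (i), and uniqueness of the minimal-norm element of $B(y)$ give $B_n^0(y_n)\rightharpoonup B^0(y)$ with $\limsup\|B_n^0(y_n)\|\le\|B^0(y)\|$, hence strong convergence. The linear growth bound plays no role here, so your remark that it controls $\|B_n^0(y_n)\|$ is mistaken. Finally, remove the abandoned Minty and Yosida detours from your write-up of (iii).
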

	
	For any nonempty bounded subsets $A$ and $B$ of a Banach space $X$, the Hausdorff metric $\mathcal{H}(A, B)$ is defined by
	$$
	\mathcal{H}(A, B) := \max \left\{ \sup_{a \in A} d(a, B), \, \sup_{b \in B} d(b, A) \right\},
	$$
	We also require the concept of topological limits for sequences of sets.
	\begin{definition}\cite[Definition 3.14]{migorski2012nonlinear}
		Let $(X, \tau)$ be a Hausdorff topological space with topology $\tau$, and let $\{A_n\}_{n \in \mathbb{N}}$ be a nonempty sequence of subsets of $X$. We define the lower and upper topological limits as follows:
		$$
		\begin{aligned}
			\tau\text{-}\liminf _{n \rightarrow \infty} A_n &= \left\{y \in X : \exists y_n \in A_n \text{ for all } n \in \mathbb{N} \text{ such that } y_n \xrightarrow{\tau} y \right\}, \\
			\tau\text{-}\limsup _{n \rightarrow \infty} A_n &= \left\{y \in X : \exists \text{ a subsequence } \{n_k\} \text{ and } y_{n_k} \in A_{n_k} \text{ such that } y_{n_k} \xrightarrow{\tau} y \right\}.
		\end{aligned}
		$$
		The set $\tau\text{-}\liminf A_n$ is called the $\tau$-Kuratowski lower limit of the sets $A_n$, and $\tau\text{-}\limsup A_n$ is called the $\tau$-Kuratowski upper limit of the sets $A_n$.
	\end{definition}
	
	We next recall basic definitions concerning multivalued mappings; for further details, see \cite{deimling2011multivalued}.
	
	\begin{definition}
		Let $X$ and $Y$ be Banach spaces. A set-valued map $\Phi: X \rightrightarrows Y$ is said to be:
		\begin{enumerate}[{\rm (i)}]
			\item upper semicontinuous (u.s.c.) if for every open set $\Omega \subset Y$, the set
			$\Phi^{+}(\Omega) := \{x \in X \mid \Phi(x) \subset \Omega\}
			$ is open in $X$;
			\item lower semicontinuous (l.s.c.) if for every open set $\Omega \subset Y$, the set $
			\Phi^{-}(\Omega) := \{x \in X \mid \Phi(x) \cap \Omega \neq \emptyset\}
			$ is open in $X$;
		\end{enumerate}
		
		Furthermore, regarding time-dependent maps on an interval $I:=[0, b]$, we introduce the following properties:
		\begin{enumerate}[{\rm (i)}]
			\setcounter{enumi}{2}
			\item A map $F: I \rightrightarrows X$ is measurable if for every open subset $O \subset X$, the set $F^{+}(O) = \{t \in I \mid F(t) \subset O\}$ is Lebesgue measurable in $\mathbb{R}$.
			
			\item A map $F: I \rightrightarrows X$ with nonempty bounded convex values is strongly measurable if there exists a sequence $\{F_n\}_{n=1}^{\infty}$ of step set-valued maps with bounded convex values such that
			$$
			\mathcal{H}\left(F(t), F_n(t)\right) \rightarrow 0 \quad \text { as } n \rightarrow \infty \quad \text { for a.e. } t \in I.
			$$
			
			\item A map $F: I \times X \rightrightarrows Y$ is superpositionally measurable if for every measurable map $Q: I \rightrightarrows X$ with compact values, the composition $\Phi: I \rightrightarrows Y$ defined by $\Phi(t):=F(t, Q(t))$ is measurable.
			
			\item A map $\Psi: I \times X \rightrightarrows Y$ is integrably bounded if there exists a function $\varrho \in L^1(I, \mathbb{R}_+)$ such that for a.e. $t \in I$ and all $x \in X$,
			$$
			\|\Psi(t,x)\|_Y := \sup_{y \in \Psi(t,x)} \|y\|_Y \le \varrho(t).
			$$
		\end{enumerate}
	\end{definition}

	The following proposition provides equivalent characterizations of upper semicontinuity, which will be crucial for our sensitivity analysis.
	
	\begin{proposition}\cite[Proposition 3.8]{migorski2012nonlinear}\label{p2.01}
		Let $X$ and $Y$ be metric spaces and $\Phi: X \rightrightarrows Y$ be a set-valued map. The following statements are equivalent:
		\begin{enumerate}[{\rm (i)}]
			\item $\Phi$ is upper semicontinuous;
			\item For every closed set $D \subset Y$, the inverse image $\Phi^{-}(D) := \{x \in X \mid \Phi(x) \cap D \neq \emptyset\}$ is closed in $X$;
			\item If $x \in X$, $\{x_n\} \subset X$ with $x_n \to x$, and $V \subset Y$ is an open set such that $\Phi(x) \subset V$, then there exists $n_0 \in \mathbb{N}$, depending on $V$, such that $\Phi(x_n) \subset V$ for all $n \geq n_0$.
		\end{enumerate}
	\end{proposition}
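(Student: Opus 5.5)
The plan is to prove the three conditions equivalent through the cycle (i)$\Rightarrow$(iii)$\Rightarrow$(ii)$\Rightarrow$(i). Each step only uses the definitions $\Phi^+(\Omega)=\{x:\Phi(x)\subset\Omega\}$ and $\Phi^-(D)=\{x:\Phi(x)\cap D\neq\emptyset\}$, together with the sequential characterization of closed sets in metric spaces. The complement identity
$$
X\setminus \Phi^{-}(D)=\Phi^{+}(Y\setminus D)
$$
is immediate: $\Phi(x)$ misses $D$ exactly when $\Phi(x)\subset Y\setminus D$.

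\emph{Step (i)$\Rightarrow$(iii).} Let $x_n\to x$ and let $V$ be open with $\Phi(x)\subset V$. Then $x\in\Phi^+(V)$, and this set is open by (i). Hence it contains a ball around $x$, so $x_n\in\Phi^+(V)$ for all $n\ge n_0$. This means $\Phi(x_n)\subset V$ for $n\ge n_0$.

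\emph{Step (iii)$\Rightarrow$(ii).} Let $D$ be closed. Since $X$ is metric, it suffices to show that $\Phi^-(D)$ is sequentially closed. Take $x_n\in\Phi^-(D)$ with $x_n\to x$, and suppose, for contradiction, that $x\notin\Phi^-(D)$. Then $\Phi(x)\subset V:=Y\setminus D$, which is open. By (iii), $\Phi(x_n)\subset V$ for large $n$, so $\Phi(x_n)\cap D=\emptyset$. This contradicts $x_n\in\Phi^-(D)$. Hence $x\in\Phi^-(D)$ and $\Phi^-(D)$ is closed.

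\emph{Step (ii)$\Rightarrow$(i).} Given an open set $\Omega\subset Y$, set $D=Y\setminus\Omega$, which is closed. By the complement identity, $\Phi^+(\Omega)=X\setminus\Phi^-(D)$. This is the complement of a closed set by (ii), hence open.

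The only point requiring care is the use of sequences in (iii)$\Rightarrow$(ii). This is legitimate because $X$ is a metric space, so closedness is equivalent to sequential closedness. No compactness of the values $\Phi(x)$ is needed in any of the three implications, and none of them is a real obstacle. The empty-valued case $\Phi(x)=\emptyset$ is handled consistently by the same identities, since $\emptyset\subset V$ for every $V$.
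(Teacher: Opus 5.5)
Your proof is correct and complete. The complement identity gives (i)$\Leftrightarrow$(ii), (i)$\Rightarrow$(iii) follows from openness of $\Phi^+(V)$, and (iii)$\Rightarrow$(ii) uses only that closedness in $X$ is sequential (the metric on $Y$ plays no role). The paper gives no proof of its own and simply quotes \cite[Proposition 3.8]{migorski2012nonlinear}; your cycle is the standard argument behind that reference.
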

	
\section{Assumptions and Preliminary Results}\label{s3}

To streamline the later sections, we collect in one place the standing hypotheses and preliminary conclusions used throughout the paper. The abstract solvability and sensitivity part of this section is adapted from \cite{zengduwang2026submitted}, whereas the optimization-oriented assumptions on costs and controls are tailored to the present paper. For the convenience of the reader, we proceed in two steps: we first record the assumptions on the parameterized and control-driven dynamics, together with the optimization layer hypotheses, and then gather the well-posedness, compactness, and parameter-stability results that will be used later.

\subsection{Standing Assumptions for the Abstract and Sweeping Control Framework}
	
	We formulate all assumptions directly for the parameterized system in Problem \ref{p2a} and the control-driven system in Problem \ref{p2b}; the basic coupled inclusion \eqref{e1.01} of Problem \ref{p1} is recovered by fixing a single parameter value. These hypotheses govern the analysis used in Sections \ref{s5} and \ref{s6}.
	
	\begin{itemize}
		\item[$H(\widetilde{A})$] For each $\zeta \in \Lambda$, the operator $A_\zeta: D(A_\zeta) \subset E \to E$ generates a $C_0$-semigroup $\{T_\zeta(t)\}_{t \geq 0}$ on $E$ such that $T_\zeta(t)$ is compact for every $t > 0$. Moreover, for every bounded set $D \subset \Lambda$, there exists $M_D > 0$ such that
		$$
		\sup_{t \in I} \|T_\zeta(t)\|_{\mathcal{L}(E)} \leq M_D \quad \text{for all } \zeta \in D,
		$$
		and whenever $\zeta_n \to \zeta$ in $\Lambda$,
		$$
		\lim_{n \to \infty} \sup_{t \in I} \|T_{\zeta_n}(t)x - T_\zeta(t)x\|_E = 0 \quad \text{for every } x \in E,
		$$
		while for every $0 < \delta < b$,
		$$
		\lim_{n \to \infty} \sup_{t \in [\delta, b]} \|T_{\zeta_n}(t) - T_\zeta(t)\|_{\mathcal{L}(E)} = 0.
		$$
		
		\item[$H(\widetilde{B})$] For each $(t, u, \eta) \in I \times H \times \Xi$, the operator $B(t, u, \eta): D(B(t, u, \eta)) \subset H \rightrightarrows H$ is maximal monotone, and:
		\begin{enumerate}[{\rm (i)}]
			\item there exist $\theta \in W^{1,2}(I, \mathbb{R})$, $\lambda \in [0, 1)$, and $\lambda' > 0$ such that
			$$
			\operatorname{dis}(B(t, u, \eta_1), B(s, v, \eta_2))
			\leq |\theta(t) - \theta(s)| + \lambda\|u - v\| + \lambda' d_\Xi(\eta_1, \eta_2)
			$$
			for all $t, s \in I$, $u, v \in H$, and $\eta_1, \eta_2 \in \Xi$;
			\item there exists $M_B > 0$ such that
			$$
			\|B^0(t, u, \eta)v\| \leq M_B(1 + \|u\| + \|v\|)
			$$
			for all $t \in I$, $u \in H$, $\eta \in \Xi$, and $v \in D(B(t, u, \eta))$;
			\item for every bounded set $\Omega \subset H \times \Xi$, the set $D(B(I \times \Omega))$ is ball-relatively compact in $H$;
			\item for every bounded set $D \subset \Xi$ and every $r > 0$ there exists $\varrho_D \in L^1(I, \mathbb{R}_+)$ such that
			$$
			\langle v_1 - v_2, u_1 - u_2 \rangle \geq -\varrho_D(t)\|u_1 - u_2\|^2
			$$
			for all $t \in I$, $u_1, u_2 \in r\mathbb{B}_H$, $\eta_1, \eta_2 \in D$, and all $v_i \in B(t, u_i, \eta_i)(u_i)$.
		\end{enumerate}
		
		\item[$H(\widetilde{F})$] The multifunction $F: I \times E \times H \times \Lambda \rightrightarrows E$ has nonempty weakly compact convex values and:
		\begin{enumerate}[{\rm (i)}]
			\item $t \mapsto F(t, x, u, \zeta)$ is measurable for all $(x, u, \zeta) \in E \times H \times \Lambda$;
			\item for every bounded set $D \subset \Lambda$, there exists $k_D \in L^1(I, \mathbb{R}_+)$ such that
			$$
			\mathcal{H}(F(t, x, u, \zeta), F(t, y, v, \zeta))
			\leq k_D(t)(\|x - y\|_E + \|u - v\|)
			$$
			for all $(x, u), (y, v) \in E \times H$, $\zeta \in D$, and a.e. $t \in I$;
			\item for every bounded set $D \subset \Lambda$, there exists $M_F > 0$ such that
			$$
			\|F(t, x, u, \zeta)\|_E \leq M_F(1 + \|x\|_E + \|u\|)
			$$
			for a.e. $t \in I$ and all $(x, u, \zeta) \in E \times H \times D$;
			\item for all $(x, u) \in E \times H$, $\zeta_1, \zeta_2 \in \Lambda$, and a.e. $t \in I$,
			$$
			\mathcal{H}(F(t, x, u, \zeta_1), F(t, x, u, \zeta_2))
			\leq \gamma_1(d_\Lambda(\zeta_1, \zeta_2)) \rho_1(t, \|x\|_E, \|u\|),
			$$
			where $\gamma_1(r) \to 0$ as $r \to 0^+$ and, for every bounded $S \subset L^2(I, E) \times L^2(I, H)$, there exists $\omega_S \in L^1(I, \mathbb{R}_+)$ such that
			$$
			\rho_1(t, \|x(t)\|_E, \|u(t)\|) \leq \omega_S(t)
			$$
			for a.e. $t \in I$ and all $(x, u) \in S$.
		\end{enumerate}
		
		\item[$H(\widetilde{G})$] The multifunction $G: I \times E \times H \times \Xi \rightrightarrows H$ has nonempty weakly compact convex values and:
		\begin{enumerate}[{\rm (i)}]
			\item $t \mapsto G(t, x, u, \eta)$ is measurable;
			\item for every bounded set $D \subset \Xi$, there exists $l_D \in L^1(I, \mathbb{R}_+)$ such that
			$$
			\mathcal{H}(G(t, x, u, \eta), G(t, y, v, \eta))
			\leq l_D(t)(\|x - y\|_E + \|u - v\|)
			$$
			for all $(x, u), (y, v) \in E \times H$, $\eta \in D$, and a.e. $t \in I$;
			\item for every bounded set $D \subset \Xi$, there exists $M_G > 0$ such that
			$$
			\|G(t, x, u, \eta)\| \leq M_G(1 + \|x\|_E + \|u\|)
			$$
			for a.e. $t \in I$ and all $(x, u, \eta) \in E \times H \times D$;
			\item for all $(x, u) \in E \times H$, $\eta_1, \eta_2 \in \Xi$, and a.e. $t \in I$,
			$$
			\mathcal{H}(G(t, x, u, \eta_1), G(t, x, u, \eta_2))
			\leq \gamma_2(d_\Xi(\eta_1, \eta_2)) \rho_2(t, \|x\|_E, \|u\|),
			$$
			where $\gamma_2(r) \to 0$ as $r \to 0^+$ and, for every bounded set $Q \subset C(I, E) \times C(I, H)$, there exists $\omega_Q \in L^1(I, \mathbb{R}_+)$ such that
			$$
			\rho_2(t, \|x(t)\|_E, \|u(t)\|) \leq \omega_Q(t)
			$$
			for a.e. $t \in I$ and all $(x, u) \in Q$.
		\end{enumerate}
		
		\item[$H(0)$] The maps $\zeta \mapsto x_0(\zeta)$ and $\eta \mapsto u_0(\eta)$ are continuous from $\Lambda$ to $E$ and from $\Xi$ to $H$, respectively, and for every bounded set $D \subset \Lambda \times \Xi$ there exists $\varpi_D > 0$ such that
		$$
		\max\{\|x_0(\zeta)\|_E, \|u_0(\eta)\|\} \leq \varpi_D \quad \text{for all } (\zeta, \eta) \in D.
		$$
	\end{itemize}
	
	We now turn to the optimization layer. The next group of hypotheses concerns the Bolza functionals associated with Problem \ref{p2a} and the admissible-control set introduced in Problem \ref{p2b}. Unlike the abstract solvability assumptions above, these conditions are specific to the optimization theory developed in the present paper.
	
	\begin{itemize}
		\item[$H(\varphi)$] The terminal cost function $\varphi: E \times H \times \Lambda \times \Xi \to \mathbb{R}$ is continuous.
		
		\item[$H(\ell)$] The running cost integrand $\ell: I \times E \times H \times \Lambda \times \Xi \to \mathbb{R}$ satisfies the following conditions:
		\begin{enumerate}[{\rm (i)}]
			\item For every $(x, u, \zeta, \eta) \in E \times H \times \Lambda \times \Xi$, the map $t \mapsto \ell(t, x, u, \zeta, \eta)$ is measurable.
			
			\item For a.e. $t \in I$, all $x, y \in E$, and all $(u, \zeta, \eta) \in H \times \Lambda \times \Xi$, the following generalized local continuity condition holds:
			$$
			|\ell(t, x, u, \zeta, \eta) - \ell(t, y, u, \zeta, \eta)| \leq (1 + \|x\|_E \vee \|y\|_E) w(t, \|x - y\|_E),
			$$
			where $\|x\|_E \vee \|y\|_E := \max\{\|x\|_E, \|y\|_E\}$, and $w(t, r)$ is a Carathéodory function on $I \times \mathbb{R}_+$ with values in $\mathbb{R}_+$ such that $w(t, 0) = 0$ for a.e. $t \in I$. Furthermore, for any $R > 0$, there exists a function $\beta_R \in L^1(I, \mathbb{R}_+)$ such that $\sup_{0 \leq r \leq R} w(t, r) \leq \beta_R(t)$ for a.e. $t \in I$.
			
			\item For a.e. $t \in I$ and all $(x, \zeta, \eta) \in E \times \Lambda \times \Xi$, the map $u \mapsto \ell(t, x, u, \zeta, \eta)$ is convex.
			
			\item For any bounded subset $D \subset \Lambda \times \Xi$, there exists a function $\vartheta_D \in L^1(I, \mathbb{R}_+)$ such that for a.e. $t \in I$, all $u \in H$, and all $(\zeta, \eta) \in D$:
			$$
			|\ell(t, 0, u, \zeta, \eta)|\leq \vartheta_D(t)(1 + \|u\|).
			$$
			
			\item For a.e. $t \in I$, the map $(x, u, \zeta, \eta) \mapsto \ell(t, x, u, \zeta, \eta)$ is continuous on $E \times H \times \Lambda \times \Xi$.
		\end{enumerate}
		
		\item[$H(W)$] The multifunction $W: I \rightrightarrows Y$ is measurable, has nonempty, closed, convex values, and is integrably bounded; that is, there exists $\phi_W \in L^2(I, \mathbb{R}_+)$ such that $\sup_{v \in W(t)} \|v\|_Y \leq \phi_W(t)$ a.e.
	\end{itemize}
	
We next turn to the parameterized system in Problem \ref{p2a}, namely \eqref{e1.01a}. Its solution map will serve as the admissible-set map in the optimization sections.
	
	Throughout this part, we work under the standing assumptions $H(\widetilde{A})$, $H(\widetilde{B})$, $H(\widetilde{F})$, $H(\widetilde{G})$, and $H(0)$. Under this convention, the existence, compactness, and continuity properties below are formulated under a single $B$-hypothesis on bounded parameter sets.
	
	For each $(\zeta, \eta) \in \Lambda \times \Xi$, let $\mathcal{S}(\zeta, \eta)$ denote the mild-solution set of Problem \ref{p2a}.
	
\begin{theorem}\label{t4.01}
	Let $D \subset \Lambda \times \Xi$ be bounded. Under hypotheses $H(\widetilde{A})$, $H(\widetilde{B})$, $H(\widetilde{F})$, $H(\widetilde{G})$, and $H(0)$, Problem \ref{p2a} admits at least one mild solution $(x, u) \in C(I, E) \times W^{1,2}(I, H)$ for each $(\zeta, \eta) \in D$.
		
		Moreover, there exist constants $a_D, b_D > 0$ such that every corresponding solution satisfies
		$$
		\|x(t)\|_E + \|u(t)\| \leq a_D \quad \text{for all } t \in I,
		$$
		and
		$$
	\|\dot{u}(t)\| \leq b_D\bigl(1 + |\dot{\theta}(t)|\bigr) \quad \text{for a.e. } t \in I.
	$$
\end{theorem}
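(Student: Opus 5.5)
Fix $(\zeta,\eta)\in D$. I would prove existence by a fixed-point argument on a decoupled scheme, and derive the a priori bounds from Gronwall-type estimates that hold uniformly over $D$.

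\emph{Decoupling.} Fix $\bar x\in C(I,E)$. Consider the state-dependent sweeping-type inclusion
\[
\dot u(t)\in -B(t,u(t),\eta)u(t)+G(t,\bar x(t),u(t),\eta),\qquad u(0)=u_0(\eta).
\]
Hypotheses $H(\widetilde B)$(i) with $\lambda<1$, (ii), (iii) are exactly the Kunze--Monteiro Marques/Azzam-Laouir-type conditions, and $H(\widetilde G)$ is a Lipschitz, linear-growth perturbation. So I would solve this inclusion by time discretization (catching-up with resolvents of $B(t_i^n,u_i^n,\eta)$). In the scheme one uses the pseudo-distance estimate and the minimal-norm bound $M_B$ to get
\[
\|\dot u_n(t)\|\le C\bigl(1+\|u_n(t)\|+|\dot\theta(t)|+\|G\|\bigr).
\]
Ball-relative compactness of the domains gives strong compactness of the iterates, and Lemma~\ref{l2.02}(i) lets one pass to the limit in the graph. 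The hypomonotonicity condition $H(\widetilde B)$(iv) together with the Lipschitz property of $G$ gives uniqueness of the $u$-component for each selection. Given such a $u=u[\bar x]$, the $x$-equation
\[
x(t)=T_\zeta(t)x_0(\zeta)+\int_0^tT_\zeta(t-s)f(s)\,ds,\qquad f\in S^1_{F(\cdot,x,u,\zeta)},
\]
is a standard semilinear inclusion with compact semigroup, which is solvable by the Bohnenblust--Karlin or Kakutani fixed-point theorem.

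\emph{Coupled fixed point.} Define the multimap $\Gamma$ on a closed convex bounded set $K\subset C(I,E)$ by sending $\bar x$ to the set of mild solutions $x$ of the $x$-equation driven by $u[\bar x]$. Alternatively, work directly with the pair map on $C(I,E)\times C(I,H)$ with selections. The key points are:
\begin{itemize}
\item $\Gamma(K)\subset K$, obtained from the a priori bounds below with $K$ a ball of radius $a_D$;
\item compactness, which follows from compactness of $T_\zeta(t)$ for $t>0$ via the standard Arzel\`a--Ascoli argument for mild solutions;
\item closed graph with convex values, which is the delicate part.
\end{itemize}
Convexity of the solution set can fail because $u[\bar x]$ depends nonlinearly on the data. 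I would therefore rather use a selection $g\in S^2_{G(\cdot,\bar x,\bar u,\eta)}$ and treat the pair $(f,g)\mapsto(x,u)$: for a fixed forcing $g\in L^2$, the $u$-problem is single-valued and continuous in $g$ (by hypomonotonicity and Gronwall). The fixed-point problem then becomes Kakutani on the weakly compact convex set of integrable selections, with graph closedness obtained from Lemma~\ref{l2.02}(i) and the weak compactness of the values of $F$ and $G$. This closed-graph and weak-to-strong step, which needs strong convergence of $u_n$ from ball-relative compactness and strong convergence of $x_n$ from compactness of the semigroup, is where I expect the main obstacle.

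\emph{A priori bounds.} For any solution, the mild formula together with $H(\widetilde A)$ and $H(\widetilde F)$(iii) gives
\[
\|x(t)\|_E\le M_D\varpi_D+M_DM_F\int_0^t\bigl(1+\|x\|_E+\|u\|\bigr)\,ds.
\]
For $u$, the standard estimate for state-dependent sweeping processes (via $\lambda<1$ and $M_B$) yields
\[
\|\dot u(t)\|\le \frac{1}{1-\lambda}\Bigl(|\dot\theta(t)|+c\bigl(1+\|u(t)\|+\|G(t,x,u,\eta)\|\bigr)\Bigr),
\]
and integrating this gives a bound on $\|u(t)\|$ in terms of $\int(1+\|x\|_E+\|u\|)$. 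Summing the two estimates and applying Gronwall with constants depending only on $M_D,M_F,M_G,M_B,\lambda,\varpi_D,\|\theta\|_{W^{1,2}}$ gives $a_D$. Substituting back into the derivative estimate yields $\|\dot u(t)\|\le b_D(1+|\dot\theta(t)|)$, and since $\dot\theta\in L^2$ this also gives $u\in W^{1,2}(I,H)$.
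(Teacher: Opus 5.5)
Your route is viable, but it differs from the paper's. The paper imports Theorem~\ref{t4.01} from its companion work and does not prove it here. It does indicate (Section~\ref{s10} and the conclusion) that the result rests on a semi-implicit feedback discretization of the \emph{whole} coupled system:
\begin{itemize}
\item on each step the state argument of $B$ is frozen at the previous node;
\item $x$ is advanced explicitly, and $u$ through the resolvent of $B(t_i,u_{i-1},\eta)$ (a projection when $B=N_C$);
\item discrete Gr\"onwall estimates using $\lambda<1$ and $M_B$ presumably give $a_D,b_D$ along the scheme;
\item compactness of $T_\zeta(t)$, ball-relative compactness of the domains, and Lemma~\ref{l2.02} then pass to the limit.
\end{itemize}
For existence, that route needs neither a fixed-point theorem nor uniqueness of any subproblem, and it is constructive.

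You instead discretize only the $u$-equation with a frozen forcing $g$, use $H(\widetilde B)$(iv) to make $g\mapsto u[g]$ single-valued, and close the coupling by Kakutani--Fan--Glicksberg on selection sets. This is modular, and uniqueness for fixed forcing buys you something the statement actually requires. The velocity bound comes from the scheme, yet it transfers to \emph{every} solution, because each solution's $u$-component is the unique solution for its own selection $g$. Say this explicitly rather than appealing to a ``standard estimate'' for arbitrary solutions.

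Two steps need repair as written.

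\emph{Invariance.} A sup-norm ball of radius $a_D$ is not invariant. The constant $a_D$ bounds fixed points only, while the image of such a ball is in general controlled only when $b$ is small. Use a Bielecki-weighted set such as $\{(f,g):\|f(t)\|_E\le Re^{Lt},\ \|g(t)\|\le Re^{Lt}\}$ with $L$ large, or truncate $F,G$ by the radial retraction onto the a priori ball.

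\emph{Weak-to-strong continuity of $g\mapsto u[g]$.} This cannot invoke Lemma~\ref{l2.02}(i) directly, since $L^2$-weak convergence of $\omega_n=-\dot u_n+g_n$ gives no pointwise weak convergence. Use Mazur to get $\omega(t)\in\bigcap_k\overline{\operatorname{co}}\{\omega_n(t):n\ge k\}$ a.e., together with the pointwise bound on $\omega_n(t)$ supplied by your set of selections. Then conclude either from Lemma~\ref{l2.02}(i) plus closed convexity of $B(t,u(t),\eta)u(t)$, or from the Minty argument via Lemma~\ref{l2.02}(ii)--(iii). Uniqueness then upgrades subsequential convergence to convergence of the whole sequence.
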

	
\begin{theorem}\label{t4.02}
	Let $D \subset \Lambda \times \Xi$ be bounded. Under hypotheses $H(\widetilde{A})$, $H(\widetilde{B})$, $H(\widetilde{F})$, $H(\widetilde{G})$, and $H(0)$, the solution map $\mathcal{S}: \Lambda \times \Xi \rightrightarrows C(I, E) \times C(I, H)$ has nonempty compact values and is Painlev\'e--Kuratowski continuous on $D$. More precisely, if $(\zeta_n, \eta_n) \to (\zeta, \eta)$ in $D$, then
		$$
		(s \times s)\text{-}\limsup_{n \to \infty} \mathcal{S}(\zeta_n, \eta_n) \subseteq \mathcal{S}(\zeta, \eta) \subseteq (s \times s)\text{-}\liminf_{n \to \infty} \mathcal{S}(\zeta_n, \eta_n),
	$$
	in the topology of $C(I, E) \times C(I, H)$.
\end{theorem}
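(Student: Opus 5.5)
The plan is to prove the three assertions (nonempty compact values, the upper Kuratowski inclusion, the lower Kuratowski inclusion) in that order, using throughout the a priori bounds of Theorem~\ref{t4.01}. Nonemptiness is Theorem~\ref{t4.01}. Fix a sequence $(\zeta_n,\eta_n)\to(\zeta,\eta)$ in $D$ and solutions $(x_n,u_n)\in\mathcal{S}(\zeta_n,\eta_n)$. These satisfy $\|x_n(t)\|_E+\|u_n(t)\|\le a_D$ and $\|\dot u_n(t)\|\le b_D(1+|\dot\theta(t)|)$.

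\textbf{Compactness in $C(I,H)$.} The $u_n$ are equi-absolutely continuous because $\dot\theta\in L^2$. Moreover $u_n(t)\in D(B(t,u_n(t),\eta_n))$, a bounded set, and by $H(\widetilde B)$(iii) these values lie in a relatively compact set. Arzel\`a--Ascoli then gives a subsequence with $u_n\to u$ in $C(I,H)$ and $\dot u_n\rightharpoonup\dot u$ in $L^2(I,H)$.

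\textbf{Compactness in $C(I,E)$.} Write
\[
x_n(t)=T_{\zeta_n}(t)x_0(\zeta_n)+\int_0^t T_{\zeta_n}(t-s)f_n(s)\,ds,
\]
with $f_n$ a selection of $F(\cdot,x_n,u_n,\zeta_n)$, bounded in $L^\infty$ by $H(\widetilde F)$(iii). The first term converges uniformly by $H(\widetilde A)$ and $H(0)$. For the integral term, the compactness of $T_\zeta(t)$ for $t>0$ and the uniform operator convergence on $[\delta,b]$ yield the standard compactness of the Cauchy operator $f\mapsto\int_0^\cdot T_{\zeta_n}(\cdot-s)f(s)\,ds$ uniformly in $n$. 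The argument is to truncate near $s=t$ with a $\delta$-tail estimate and factor $T(t-s)=T(\delta)T(t-s-\delta)$. This gives $x_n\to x$ in $C(I,E)$ along a subsequence, with $f_n\rightharpoonup f$ in $L^2(I,E)$.

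\textbf{Passing to the limit.} For the $x$-equation, weak convergence of $f_n$ together with $H(\widetilde F)$(ii),(iv) and Mazur's lemma (convex, weakly compact values) shows $f(t)\in F(t,x(t),u(t),\zeta)$ a.e. The mild formula passes to the limit because $T_{\zeta_n}\to T_\zeta$ strongly and uniformly. For the $u$-equation, write $\dot u_n=-v_n+g_n$ with $g_n\in G(\cdot,x_n,u_n,\eta_n)$ and $g_n\rightharpoonup g\in G(\cdot,x,u,\eta)$ by the same argument. Then $v_n=g_n-\dot u_n\rightharpoonup v$ weakly in $L^2$. By $H(\widetilde B)$(i), $\operatorname{dis}(B(t,u_n(t),\eta_n),B(t,u(t),\eta))\to0$ uniformly in $t$. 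The closedness Lemma~\ref{l2.02}(i) needs pointwise weak convergence, which we do not have, so instead I would use the variational characterization Lemma~\ref{l2.02}(iii). Using (ii), approximate any test point $z\in D(B(t,u(t),\eta))$ by points $z_n\in D(B(t,u_n(t),\eta_n))$ with $B^0(z_n)\to B^0(z)$. Monotonicity gives $\langle B^0(z_n)-v_n(t),z_n-u_n(t)\rangle\ge0$. Integrate against nonnegative test functions of $t$; the strong convergence of $u_n$ allows passage to the limit. Then apply Lemma~\ref{l2.02}(iii) a.e. to get $v(t)\in B(t,u(t),\eta)(u(t))$. The initial conditions follow from $H(0)$.

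Taking constant sequences gives compactness of $\mathcal{S}(\zeta,\eta)$, and general sequences give the $\limsup$ inclusion.

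\textbf{The lower limit.} This is the main obstacle. Given $(x,u)\in\mathcal{S}(\zeta,\eta)$, I need approximants $(x_n,u_n)\in\mathcal{S}(\zeta_n,\eta_n)$. The plan is a Filippov-type construction exploiting the Lipschitz hypotheses. For fixed $n$, consider the map sending $(y,w)\in C(I,E)\times C(I,H)$ to the set of pairs $(\tilde x,\tilde u)$ such that:
\begin{itemize}
\item $\tilde u$ is the unique solution of the sweeping-type inclusion with operator $B(t,w(t),\eta_n)$ and with a selection of $G(t,y,w,\eta_n)$ closest to $g(t)$;
\item $\tilde x$ is the mild solution driven by the selection of $F(t,y,w,\zeta_n)$ closest to $f(t)$.
\end{itemize}
Uniqueness of $\tilde u$ holds via $H(\widetilde B)$(iv). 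Then I would estimate the distance to $(x,u)$ using two ingredients. The first is the Vladimirov-type stability estimate for time-dependent monotone evolution in terms of the pseudo-distance, which produces terms $\lambda\|w-u\|+\lambda' d_\Xi(\eta_n,\eta)$; here $\lambda<1$ is what provides the contraction. The second is Gronwall with $k_D,l_D,\varrho_D$ to absorb the rest.

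Iterating, i.e.\ running successive approximations starting from $(x,u)$, yields a fixed point $(x_n,u_n)\in\mathcal{S}(\zeta_n,\eta_n)$ satisfying
\[
\|x_n-x\|_{C}+\|u_n-u\|_{C}\le C\Bigl(\|x_0(\zeta_n)-x_0(\zeta)\|_E+\|u_0(\eta_n)-u_0(\eta)\|+d_\Xi(\eta_n,\eta)+\gamma_1+\gamma_2+\sup_t\bigl\|(T_{\zeta_n}-T_\zeta)(t)\,\cdot\bigr\|\Bigr)\to0,
\]
where the semigroup term is handled as above. The delicate point is the square-root-type dependence on the pseudo-distance in Vladimirov's estimate. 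I would first try to control it uniformly using the bound $b_D(1+|\dot\theta|)$ and then check that the series of successive differences still converges, possibly in a weighted sup-norm $e^{-Lt}$.
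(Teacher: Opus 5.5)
Your compactness and upper-limit steps are essentially sound (see the last paragraph for one repair). They follow the same compactness argument the paper runs in the proofs of Theorems~\ref{t5.01}--\ref{t5.02}; the paper does not reprove Theorem~\ref{t4.02} but imports it from \cite{zengduwang2026submitted}.

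\textbf{The gap: the lower inclusion.} The point you call ``delicate'' is fatal for the frozen-state iteration. Two iterates $\Phi(w_1),\Phi(w_2)$ are driven by the different operators $B(t,w_1(t),\eta_n)$ and $B(t,w_2(t),\eta_n)$. The only comparison tool is therefore the pseudo-distance inequality, which gives
\[
\|\Phi(w_1)(t)-\Phi(w_2)(t)\|^2\le 2\lambda\int_0^t\bigl(1+\|y_1(s)\|+\|y_2(s)\|\bigr)\|w_1(s)-w_2(s)\|\,ds+\cdots,
\]
with $y_i(s)\in B(s,w_i(s),\eta_n)(\Phi(w_i)(s))$. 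The weight is of order one (bounded by a multiple of $1+|\dot\theta|$), not small, so you only get a square-root modulus. If $\|w_1(t)-w_2(t)\|\le\epsilon e^{Lt}$, the difference of the images is only $O(\sqrt{\epsilon})$. This can never be bounded by $\kappa\epsilon e^{Lt}$ with $\kappa<1$ for small $\epsilon$, whatever $L$ is. Successive differences then obey only $d_{k+1}\le C\sqrt{d_k}$, which does not force $d_k\to0$.

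The condition $\lambda<1$ is not a Lipschitz constant for $w\mapsto\Phi(w)$; its role in the existence theory is the a priori velocity bound. Choosing the point of $G(t,y,w,\eta_n)$ nearest to the \emph{fixed} $g(t)$ has the same defect, since metric projection onto Hausdorff-Lipschitz moving convex sets is only H\"older-$\tfrac12$ in $(y,w)$. Finally, uniqueness for the frozen problem is plain monotonicity. $H(\widetilde{B})$(iv) controls only pairs $v_i\in B(t,u_i,\eta_i)(u_i)$, with the operator evaluated at its own argument, and that is exactly what freezing discards.

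\textbf{The missing idea: keep the state dependence and use $H(\widetilde{B})$(iv) directly.} Suppose $v_i$ solves $\dot v_i\in-B(t,v_i,\eta_i)(v_i)+h_i$ with $\eta_1,\eta_2\in D$ and $\|v_i\|\le r$. Then for a.e.\ $t$,
\[
\tfrac{d}{dt}\tfrac12\|v_1-v_2\|^2\le\varrho_D\|v_1-v_2\|^2+\|h_1-h_2\|\,\|v_1-v_2\|,
\]
hence $\|v_1-v_2\|_{C(I,H)}\le e^{\|\varrho_D\|_{L^1}}\bigl(\|v_1(0)-v_2(0)\|+\|h_1-h_2\|_{L^1(I,H)}\bigr)$. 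So for each parameter the state-dependent $u$-equation has a single-valued solution operator in the forcing, Lipschitz in $L^1$; existence comes from the theory behind Theorem~\ref{t4.01}. The same estimate compares the $\eta_n$-problem with forcing $g$ to $u$. If you prefer to use (iv) only with $\eta_1=\eta_2$, get that convergence from your upper-limit compactness plus uniqueness instead.

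Then run the classical Filippov scheme on the selections only:
\begin{itemize}
\item Start from the selections $(f,g)$ of the given $(x,u)$.
\item Let $x^k$ be the mild solution under $T_{\zeta_n}$ with source $f^k$, and $u^k$ the $\eta_n$-solution with forcing $g^k$.
\item Take $f^{k+1}(t)$ and $g^{k+1}(t)$ to be the points of $F(t,x^k(t),u^k(t),\zeta_n)$ and $G(t,x^k(t),u^k(t),\eta_n)$ nearest to $f^k(t)$ and $g^k(t)$ (not to the original $f$, $g$).
\end{itemize}
$H(\widetilde F)$(ii) and $H(\widetilde G)$(ii) then give factorial bounds on the increments. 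The initial defect is controlled by $\|u_0(\eta_n)-u_0(\eta)\|$, $H(\widetilde F)$(iv), $H(\widetilde G)$(iv), $H(0)$ and the semigroup convergence in $H(\widetilde A)$, all of which vanish as $n\to\infty$. No square-root estimate is ever iterated.

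\textbf{Repair in the upper-limit step.} You are right to distrust pointwise weak convergence of $v_n(t)$. However, integrating the Minty inequality against a fixed $z$ is problematic, because $D(B(t,u(t),\eta))$ moves with $t$ and $B^0$ is not continuous. Argue pointwise instead. For $(z,y)$ in the graph of $B(t,u(t),\eta)$, the definition of $\operatorname{dis}$ together with $H(\widetilde B)$(i) gives $\langle v_n(t)-y,z-u(t)\rangle\le\epsilon_n(t)\to0$. The left side is affine in $v_n(t)$, so the inequality survives Mazur's convex combinations. Maximal monotonicity then yields $v(t)\in B(t,u(t),\eta)(u(t))$ directly.
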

	
	Theorem \ref{t4.02} provides the stability input needed in Sections \ref{s5} and \ref{s6}.
	
	\section{Sensitivity Analysis of Optimal Values and Optimal Pairs}\label{s5}
	
	In this section, we investigate the sensitivity of optimal control problems governed by the parameter-dependent system \eqref{e1.01a}. Building upon the topological properties (specifically, compactness and Painlev\'e--Kuratowski continuity) of the solution set $\mathcal{S}(\zeta, \eta)$ established in Section \ref{s3}, we examine the behavior of the optimal value function and the set of optimal solutions as the parameters $\zeta$ and $\eta$ vary within the metric spaces $\Lambda$ and $\Xi$, respectively.
	
	We consider a general cost functional $\mathcal{J}: C(I, E) \times C(I, H) \times \Lambda \times \Xi \to \mathbb{R}$ defined by:
	\begin{equation}\label{e5.01}
		\mathcal{J}(x, u, \zeta, \eta) := \int_0^b \ell(t, x(t), u(t), \zeta, \eta) \, dt + \varphi(x(b), u(b), \zeta, \eta),
	\end{equation}
	where $\ell: I \times E \times H \times \Lambda \times \Xi \to \mathbb{R}$ represents the running cost integrand, and $\varphi: E \times H \times \Lambda \times \Xi \to \mathbb{R}$ denotes the terminal cost.
	
	For any parameter pair $(\zeta, \eta) \in \Lambda \times \Xi$, the optimal control problem is formulated as the minimization of the cost functional over the set of mild solutions $\mathcal{S}(\zeta, \eta)$.
	
	\begin{problem}\label{p4}
		Find a pair $(x^*, u^*) \in \mathcal{S}(\zeta, \eta)$ such that
		$$
		\mathcal{J}(x^*, u^*, \zeta, \eta) = \inf_{(x, u) \in \mathcal{S}(\zeta, \eta)} \mathcal{J}(x, u, \zeta, \eta).
		$$
	\end{problem}
	
	Associated with Problem \ref{p4}, we define the optimal value function $m: \Lambda \times \Xi \to \mathbb{R}$ by:
	$$
	m(\zeta, \eta) := \inf \{ \mathcal{J}(x, u, \zeta, \eta) \mid (x, u) \in \mathcal{S}(\zeta, \eta) \},
	$$
	and the optimal solution map $\mathcal{M}: \Lambda \times \Xi \rightrightarrows C(I, E) \times C(I, H)$ by:
	\begin{equation}\label{e5.02}
		\mathcal{M}(\zeta, \eta) := \{ (x, u) \in \mathcal{S}(\zeta, \eta) \mid \mathcal{J}(x, u, \zeta, \eta) = m(\zeta, \eta) \}.
	\end{equation}
	
	The cost functional is governed by hypotheses $H(\ell)$ and $H(\varphi)$ from Section \ref{s3}; they encode the measurability, continuity, convexity, and growth conditions used in the direct method below.
	
	Using the compactness of $\mathcal{S}(\zeta,\eta)$ established in Theorem \ref{t4.02}, we show that $\mathcal{J}$ is continuous along every convergent sequence within the feasible set, and apply the direct method of the calculus of variations to establish existence of minimizers.
	
	\begin{proposition}
		Under hypotheses $H(\widetilde{A})$, $H(\widetilde{B})$, $H(\widetilde{F})$, $H(\widetilde{G})$, $H(0)$, $H(\ell)$, and $H(\varphi)$, Problem \ref{p4} admits at least one solution for any $(\zeta, \eta) \in \Lambda \times \Xi$; that is, $\mathcal{M}(\zeta, \eta) \neq \emptyset$.
	\end{proposition}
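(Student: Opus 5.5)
Fix $(\zeta,\eta)\in\Lambda\times\Xi$. The plan is the direct method on the compact set $\mathcal{S}(\zeta,\eta)$. Theorem~\ref{t4.02}, applied with the bounded set $D=\{(\zeta,\eta)\}$, shows that $\mathcal{S}(\zeta,\eta)$ is nonempty and compact in $C(I,E)\times C(I,H)$. So it suffices to prove two things: that $\mathcal{J}(\cdot,\cdot,\zeta,\eta)$ is finite on $\mathcal{S}(\zeta,\eta)$, and that it is sequentially lower semicontinuous (in fact continuous) along uniformly convergent sequences in $\mathcal{S}(\zeta,\eta)$. Granting these, take a minimizing sequence $(x_n,u_n)\in\mathcal{S}(\zeta,\eta)$ with $\mathcal{J}(x_n,u_n,\zeta,\eta)\to m(\zeta,\eta)$. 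Extract a subsequence converging to some $(x^*,u^*)\in\mathcal{S}(\zeta,\eta)$ in $C(I,E)\times C(I,H)$. Lower semicontinuity then gives $\mathcal{J}(x^*,u^*,\zeta,\eta)\le m(\zeta,\eta)$, and hence $(x^*,u^*)\in\mathcal{M}(\zeta,\eta)$. In particular $m(\zeta,\eta)>-\infty$.

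\emph{Finiteness.} Let $(x,u)\in\mathcal{S}(\zeta,\eta)$. By Theorem~\ref{t4.01}, $\|x(t)\|_E+\|u(t)\|\le a_D$ on $I$. Measurability of $t\mapsto\ell(t,x(t),u(t),\zeta,\eta)$ follows from the Carath\'eodory structure given by $H(\ell)$(i),(v), using continuity of $x,u$ and approximation by step functions. For integrability, combine $H(\ell)$(ii) with $y=0$ and $H(\ell)$(iv):
$$
|\ell(t,x(t),u(t),\zeta,\eta)|\le (1+a_D)\,\beta_{a_D}(t)+\vartheta_D(t)(1+a_D),
$$
which lies in $L^1(I)$. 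The terminal term $\varphi(x(b),u(b),\zeta,\eta)$ is a real number.

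\emph{Continuity along convergent sequences.} Suppose $(x_n,u_n)\to(x,u)$ in $C(I,E)\times C(I,H)$ within $\mathcal{S}(\zeta,\eta)$. Then $(x_n(b),u_n(b))\to(x(b),u(b))$, and $H(\varphi)$ gives convergence of the terminal term. For the running cost, $H(\ell)$(v) gives $\ell(t,x_n(t),u_n(t),\zeta,\eta)\to\ell(t,x(t),u(t),\zeta,\eta)$ for a.e.\ $t$. The integrands are dominated by the fixed $L^1$ function from the finiteness step, since all solutions share the bound $a_D$. Lebesgue's dominated convergence theorem therefore yields convergence of the integrals. Note that the convexity hypothesis $H(\ell)$(iii) is not needed here, because the convergence is strong. (If one only had weak convergence in $u$, convexity would be used instead, via Mazur's lemma, to get lower semicontinuity.)

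The main obstacle is essentially bookkeeping: verifying measurability of the composed integrand and producing a uniform $L^1$ majorant. Both steps rest on the uniform a priori bound of Theorem~\ref{t4.01} together with $H(\ell)$(ii),(iv). Once these are in place, compactness from Theorem~\ref{t4.02} closes the argument.
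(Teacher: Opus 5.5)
Your proposal is correct and follows essentially the same route as the paper: compactness of $\mathcal{S}(\zeta,\eta)$ from Theorem~\ref{t4.02}, an $L^1$ majorant built from $H(\ell)$(ii),(iv), and passage to the limit along a minimizing sequence via $H(\ell)$(v), dominated convergence and continuity of $\varphi$. The differences are cosmetic. You take the uniform bound from Theorem~\ref{t4.01} rather than from the compactness of the solution set, and you explicitly check measurability of the composed integrand, which the paper leaves implicit.
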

	
	\begin{proof}
		Fix $(\zeta, \eta) \in \Lambda \times \Xi$ and set $D_0 := \{(\zeta, \eta)\}$. By Theorem \ref{t4.02}, the feasible set $\mathcal{S}(\zeta, \eta)$ is compact in $C(I, E) \times C(I, H)$; in particular, it is nonempty and closed. Hence there exists $R > 0$ such that
		$$
		\|x\|_{C(I,E)} + \|u\|_{C(I,H)} \leq R \quad \text{for all } (x, u) \in \mathcal{S}(\zeta, \eta).
		$$
		Using hypotheses $H(\ell)$(ii) and $H(\ell)$(iv), we obtain for a.e. $t \in I$ and every $(x, u) \in \mathcal{S}(\zeta, \eta)$ that
		\begin{align*}
			|\ell(t, x(t), u(t), \zeta, \eta)|
			&\leq |\ell(t, x(t), u(t), \zeta, \eta) - \ell(t, 0, u(t), \zeta, \eta)| + |\ell(t, 0, u(t), \zeta, \eta)| \\
			&\leq (1 + \|x(t)\|_E)\beta_R(t) + \vartheta_{D_0}(t)(1 + \|u(t)\|) \\
			&\leq (1 + R)\bigl(\beta_R(t) + \vartheta_{D_0}(t)\bigr).
		\end{align*}
		Since $\beta_R, \vartheta_{D_0} \in L^1(I, \mathbb{R}_+)$ and $\varphi$ is continuous, the functional $\mathcal{J}(\cdot, \cdot, \zeta, \eta)$ is finite and bounded from below on the compact set $\mathcal{S}(\zeta, \eta)$. Therefore $m(\zeta, \eta) \in \mathbb{R}$, and we may choose a minimizing sequence $\{(x_n, u_n)\}_{n \in \mathbb{N}} \subseteq \mathcal{S}(\zeta, \eta)$ such that
		$$
		\mathcal{J}(x_n, u_n, \zeta, \eta) \to m(\zeta, \eta).
		$$
		
		By compactness, after passing to a subsequence if necessary, there exists $(x^*, u^*) \in \mathcal{S}(\zeta, \eta)$ such that
		\begin{equation}\label{e5.03}
			x_n \to x^* \text{ strongly in } C(I, E)
			\quad \text{and} \quad
			u_n \to u^* \text{ strongly in } C(I, H).
		\end{equation}
		For a.e. $t \in I$, hypothesis $H(\ell)$(v) implies
		$$
		\ell(t, x_n(t), u_n(t), \zeta, \eta) \to \ell(t, x^*(t), u^*(t), \zeta, \eta).
		$$
		The estimate above supplies an $L^1$-majorant independent of $n$, so the Lebesgue Dominated Convergence Theorem yields
		\begin{equation}\label{e5.04}
			\int_0^b \ell(t, x_n(t), u_n(t), \zeta, \eta) \, dt
			\to
			\int_0^b \ell(t, x^*(t), u^*(t), \zeta, \eta) \, dt.
		\end{equation}
		Moreover, the continuity of $\varphi$ and the uniform convergence in \eqref{e5.03} imply
		\begin{equation}\label{e5.05}
			\varphi(x_n(b), u_n(b), \zeta, \eta)
			\to
			\varphi(x^*(b), u^*(b), \zeta, \eta).
		\end{equation}
		Combining \eqref{e5.04} and \eqref{e5.05}, we obtain
		$$
		\mathcal{J}(x^*, u^*, \zeta, \eta)
		=
		\lim_{n \to \infty} \mathcal{J}(x_n, u_n, \zeta, \eta)
		=
		m(\zeta, \eta).
		$$
		Hence $(x^*, u^*) \in \mathcal{M}(\zeta, \eta)$, and Problem \ref{p4} admits an optimal pair.
	\end{proof}
	
	With the Painlevé-Kuratowski continuity of the solution map established in Section \ref{s3}, we are now in a position to prove the main result of this section: the continuity of the value function.
	
	\begin{theorem}\label{t5.01}
		Under assumptions $H(\widetilde{A})$, $H(\widetilde{B})$, $H(\widetilde{F})$, $H(\widetilde{G})$, $H(0)$, $H(\ell)$, and $H(\varphi)$, the value function $m: \Lambda \times \Xi \to \mathbb{R}$ is continuous.
	\end{theorem}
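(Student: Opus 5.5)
To prove Theorem~\ref{t5.01}, fix $(\zeta,\eta)\in\Lambda\times\Xi$ and a sequence $(\zeta_n,\eta_n)\to(\zeta,\eta)$. The set $D:=\{(\zeta_n,\eta_n)\}_{n}\cup\{(\zeta,\eta)\}$ is bounded (indeed compact), so Theorems~\ref{t4.01} and~\ref{t4.02} apply on $D$. We will show
$$
\limsup_{n\to\infty} m(\zeta_n,\eta_n)\le m(\zeta,\eta)\le \liminf_{n\to\infty} m(\zeta_n,\eta_n),
$$
and this gives sequential continuity, hence continuity in the metric space $\Lambda\times\Xi$.

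The key auxiliary step is \emph{joint} sequential continuity of $\mathcal{J}$. Suppose $(x_n,u_n)\to(x,u)$ in $C(I,E)\times C(I,H)$ and $(\zeta_n,\eta_n)\to(\zeta,\eta)$. We claim that $\mathcal{J}(x_n,u_n,\zeta_n,\eta_n)\to\mathcal{J}(x,u,\zeta,\eta)$.
\begin{itemize}
\item \emph{Pointwise convergence of the integrand.} By $H(\ell)$(v), for a.e.\ $t$ we have $\ell(t,x_n(t),u_n(t),\zeta_n,\eta_n)\to\ell(t,x(t),u(t),\zeta,\eta)$.
\item \emph{Uniform integrable majorant.} Choose $R$ with $\sup_n(\|x_n\|_{C}+\|u_n\|_C)\le R$. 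Then $H(\ell)$(ii) and (iv), used exactly as in the existence proposition with $\vartheta_D$ in place of $\vartheta_{D_0}$, give the majorant $(1+R)(\beta_R+\vartheta_D)\in L^1$.
\item \emph{Conclusion.} Dominated convergence handles the running cost. The terminal cost converges by continuity of $\varphi$, since $x_n(b)\to x(b)$ and $u_n(b)\to u(b)$.
\end{itemize}

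For the upper estimate, pick $(x^*,u^*)\in\mathcal{M}(\zeta,\eta)$, which is nonempty by the preceding proposition. The inclusion $\mathcal{S}(\zeta,\eta)\subseteq(s\times s)\text{-}\liminf\mathcal{S}(\zeta_n,\eta_n)$ from Theorem~\ref{t4.02} provides $(x_n,u_n)\in\mathcal{S}(\zeta_n,\eta_n)$ with $(x_n,u_n)\to(x^*,u^*)$ uniformly. Since $m(\zeta_n,\eta_n)\le\mathcal{J}(x_n,u_n,\zeta_n,\eta_n)$, the joint continuity step yields
$$
\limsup_n m(\zeta_n,\eta_n)\le\mathcal{J}(x^*,u^*,\zeta,\eta)=m(\zeta,\eta).
$$

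For the lower estimate, choose a subsequence along which $m(\zeta_{n_k},\eta_{n_k})\to\liminf_n m(\zeta_n,\eta_n)$, and take minimizers $(x_{k},u_{k})\in\mathcal{M}(\zeta_{n_k},\eta_{n_k})$.

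This is the main obstacle. We need a further convergent subsequence, and Painlevé--Kuratowski upper limits alone do not provide one; compactness of each individual $\mathcal{S}(\zeta_n,\eta_n)$ is also not enough. We need relative compactness of the union $\bigcup_n\mathcal{S}(\zeta_n,\eta_n)$ in $C(I,E)\times C(I,H)$.

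My first attempt is to argue as follows. The uniform bounds of Theorem~\ref{t4.01} give equicontinuity of the $u$-components, via $\|\dot u\|\le b_D(1+|\dot\theta|)$ with $\dot\theta\in L^2$. Pointwise relative compactness of $u_k(t)$ comes from the ball-relative compactness of $D(B(I\times\Omega))$ in $H(\widetilde B)$(iii). Arzelà--Ascoli then gives compactness of the $u$-components. For the $x$-components I would use the mild representation $x_k(t)=T_{\zeta_{n_k}}(t)x_0(\zeta_{n_k})+\int_0^tT_{\zeta_{n_k}}(t-s)f_k(s)\,ds$, where $f_k$ is bounded in $L^\infty$ by $H(\widetilde F)$(iii). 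Compactness of the semigroups together with the uniform operator-norm convergence on $[\delta,b]$ in $H(\widetilde A)$ gives relative compactness of this family in $C(I,E)$, by the standard argument for compact semigroups, approximating the integral on $[0,t-\delta]$ via $T(\delta)$ and letting $\delta\to0$.

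Alternatively, this compactness of the union is presumably already part of the proof of Theorem~\ref{t4.02} in \cite{zengduwang2026submitted}, and I would cite it. With a subsequence converging to some $(\bar x,\bar u)$, the upper-limit inclusion of Theorem~\ref{t4.02} gives $(\bar x,\bar u)\in\mathcal{S}(\zeta,\eta)$. Joint continuity then yields
$$
\liminf_n m(\zeta_n,\eta_n)=\lim_k\mathcal{J}(x_k,u_k,\zeta_{n_k},\eta_{n_k})=\mathcal{J}(\bar x,\bar u,\zeta,\eta)\ge m(\zeta,\eta),
$$
which completes the proof.
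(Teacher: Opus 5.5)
Your proposal is correct and follows the paper's proof essentially step for step. The paper obtains the $\limsup$ bound from the lower Kuratowski limit plus dominated convergence, and the $\liminf$ bound from minimizers, Arzel\`a--Ascoli for the $u$-components, semigroup compactness for the $x$-components, and the upper Kuratowski limit; the only cosmetic difference is that it handles $\{x_n^*\}$ by comparing with $y_n$ built from the fixed limit semigroup $T_\zeta$ and showing $\|x_n^*-y_n\|_{C(I,E)}\to 0$, rather than by your direct $T(\delta)$-splitting with the varying semigroups.
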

	
	\begin{proof}
		Let $(\zeta_n, \eta_n) \to (\zeta, \eta)$ in $\Lambda \times \Xi$; we show that $m(\zeta_n,\eta_n) \to m(\zeta,\eta)$. Set $D := \{(\zeta_n, \eta_n) \mid n \in \mathbb{N}\} \cup \{(\zeta, \eta)\}$. Since $(\zeta_n, \eta_n) \to (\zeta, \eta)$, the set $D$ is bounded in $\Lambda \times \Xi$. By Theorem \ref{t4.01}, every pair $(y, v) \in \bigcup_{(\xi, \nu) \in D} \mathcal{S}(\xi, \nu)$ satisfies
		$$
		\|y\|_{C(I, E)} + \|v\|_{C(I, H)} \leq a_D
		$$
		for some constant $a_D > 0$ depending only on $D$.
		
		Set
		$$
		q_D(t) := (1 + a_D)\beta_{a_D}(t) + (1 + a_D)\vartheta_D(t), \quad t \in I,
		$$
		where $\beta_{a_D}$ and $\vartheta_D$ are the functions furnished by hypotheses $H(\ell)$(ii) and $H(\ell)$(iv), respectively. Then, for a.e. $t \in I$, every $(\xi, \nu) \in D$, and every $(y, v) \in \mathcal{S}(\xi, \nu)$, we have
		\begin{align}
			|\ell(t, y(t), v(t), \xi, \nu)|
			&\leq |\ell(t, y(t), v(t), \xi, \nu) - \ell(t, 0, v(t), \xi, \nu)| + |\ell(t, 0, v(t), \xi, \nu)| \notag\\
			&\leq (1 + \|y(t)\|_E)\beta_{a_D}(t) + \vartheta_D(t)(1 + \|v(t)\|) \notag\\
			&\leq q_D(t). \label{e5.06}
		\end{align}
		In particular, $q_D \in L^1(I, \mathbb{R}_+)$ provides a common integrable majorant for all admissible trajectories corresponding to parameters in $D$.
		
		\medskip
		\noindent\textbf{Step 1: Upper semicontinuity.}
		Let $(x, u) \in \mathcal{M}(\zeta, \eta)$ be an optimal pair. By the lower semicontinuity statement in Theorem \ref{t4.02}, there exists a sequence $(x_n, u_n) \in \mathcal{S}(\zeta_n, \eta_n)$ such that
		\begin{equation}\label{e5.07}
		(x_n, u_n) \to (x, u) \quad \text{in } C(I, E) \times C(I, H).
		\end{equation}
		Hypothesis $H(\ell)$(v), the continuity of $\varphi$, the convergence in \eqref{e5.07}, and the common majorant \eqref{e5.06} yield, by the Lebesgue Dominated Convergence Theorem,
		$$
		\mathcal{J}(x_n, u_n, \zeta_n, \eta_n) \to \mathcal{J}(x, u, \zeta, \eta) = m(\zeta, \eta).
		$$
		Since $m(\zeta_n, \eta_n) \leq \mathcal{J}(x_n, u_n, \zeta_n, \eta_n)$, we obtain
		\begin{equation}\label{e5.08}
		\limsup_{n \to \infty} m(\zeta_n, \eta_n) \leq m(\zeta, \eta).
		\end{equation}
		
		\medskip
		\noindent\textbf{Step 2: Lower semicontinuity.}
		Set $\underline{m}:=\liminf_{n\to\infty}m(\zeta_n,\eta_n)$. Choose a subsequence, still denoted by $\{(\zeta_n,\eta_n)\}$, along which the values converge to $\underline{m}$. By the preceding proposition, for each $n$ there exists $(x_n^*,u_n^*)\in\mathcal{M}(\zeta_n,\eta_n)$ such that
		\begin{equation}\label{e5.09}
		\mathcal{J}(x_n^*, u_n^*, \zeta_n, \eta_n)
		= m(\zeta_n, \eta_n) \to \underline{m}.
		\end{equation}
		For each $n$, choose
		$f_n^* \in \mathcal{P}_{F(\cdot,\cdot,\cdot,\zeta_n)}(x_n^*, u_n^*)_I$
		such that
		$$
		x_n^*(t) = T_{\zeta_n}(t)x_0(\zeta_n) + \int_0^t T_{\zeta_n}(t-s)f_n^*(s)\,ds.
		$$
		The uniform estimate in Theorem \ref{t4.01} yields
		$$
		\|x_n^*(t)\|_E + \|u_n^*(t)\| \leq a_D
		\quad \text{for all } t \in I,
		$$
		and
		$$
		\|\dot{u}_n^*(t)\| \leq b_D\bigl(1 + |\dot{\theta}(t)|\bigr)
		\quad \text{for a.e. } t \in I.
		$$
		Hence $\{u_n^*\}$ is equicontinuous on $I$. Since
		$$
		u_n^*(t) \in D\bigl(B(t, u_n^*(t), \eta_n)\bigr)
		\quad \text{and} \quad
		\|u_n^*(t)\| \leq a_D,
		$$
		hypothesis $H(\widetilde{B})$(iii) implies that $\{u_n^*(t)\}_{n \in \mathbb{N}}$ is relatively compact in $H$ for every $t \in I$. The Arzel\`a--Ascoli theorem therefore gives relative compactness of $\{u_n^*\}$ in $C(I, H)$.
		
		Moreover, hypothesis $H(\widetilde{F})$(iii) shows that $\{f_n^*\}$ is bounded in $L^2(I, E)$. Let
		$$
		y_n(t) := T_\zeta(t)x_0(\zeta) + \int_0^t T_\zeta(t-s)f_n^*(s)\,ds.
		$$
		Since the Cauchy operator associated with $T_\zeta$ is compact from $L^2(I, E)$ into $C(I, E)$, the sequence $\{y_n\}$ is relatively compact in $C(I, E)$. On the other hand, by the continuity of $\zeta \mapsto x_0(\zeta)$ and the semigroup continuity in hypothesis $H(\widetilde{A})$, we have
		$$
		\sup_{t \in I}\|T_{\zeta_n}(t)x_0(\zeta_n) - T_\zeta(t)x_0(\zeta)\|_E \to 0.
		$$
		Furthermore, if
		$$
		\varepsilon_{n,\delta} := \sup_{r \in [\delta, b]}\|T_{\zeta_n}(r) - T_\zeta(r)\|_{\mathcal{L}(E)},
		$$
		then hypothesis $H(\widetilde{A})$ implies $\varepsilon_{n,\delta} \to 0$ for every fixed $\delta > 0$, while the boundedness of $\{f_n^*\}$ in $L^2(I, E)$ gives
		\begin{align*}
			&\sup_{t \in I}\int_0^t \|[T_{\zeta_n}(t-s) - T_\zeta(t-s)]f_n^*(s)\|_E\,ds \\
			&\leq \varepsilon_{n,\delta} b^{1/2}\sup_n\|f_n^*\|_{L^2(I, E)} + 2M_D\delta^{1/2}\sup_n\|f_n^*\|_{L^2(I, E)}.
		\end{align*}
		Passing first to the limit as $n \to \infty$ and then letting $\delta \downarrow 0$, we infer that
		$$
		\|x_n^* - y_n\|_{C(I, E)} \to 0.
		$$
		Therefore $\{x_n^*\}$ is relatively compact in $C(I, E)$, and so $\{(x_n^*, u_n^*)\}$ is relatively compact in $C(I, E) \times C(I, H)$. Passing to a further subsequence if needed and using the upper semicontinuity statement in Theorem \ref{t4.02}, we may assume that
		\begin{equation}\label{e5.10}
			(x_n^*, u_n^*) \to (\bar{x}, \bar{u}) \quad \text{in } C(I, E) \times C(I, H),
			\qquad
			(\bar{x}, \bar{u}) \in \mathcal{S}(\zeta, \eta).
		\end{equation}
		Hypothesis $H(\ell)$(v), the continuity of $\varphi$, \eqref{e5.06}, and \eqref{e5.10} yield, again by the Lebesgue Dominated Convergence Theorem,
		$$
		\mathcal{J}(x_n^*, u_n^*, \zeta_n, \eta_n) \to \mathcal{J}(\bar{x}, \bar{u}, \zeta, \eta).
		$$
		Consequently, \eqref{e5.09} gives
		\begin{equation}\label{e5.11}
		\liminf_{n \to \infty} m(\zeta_n, \eta_n)
		=
		\underline{m}
		=
		\mathcal{J}(\bar{x}, \bar{u}, \zeta, \eta)
		\geq
		m(\zeta, \eta),
		\end{equation}
		where the inequality follows from the feasibility in \eqref{e5.10}.
		
		Combining \eqref{e5.08} and \eqref{e5.11} proves that $m(\zeta_n,\eta_n) \to m(\zeta,\eta)$.
	\end{proof}

	\begin{theorem}\label{t5.02}
		Under the assumptions of Theorem \ref{t5.01}, the multivalued map of optimal pairs $\mathcal{M}: \Lambda \times \Xi \rightrightarrows C(I, E) \times C(I, H)$ defined by \eqref{e5.02} is upper semicontinuous.
	\end{theorem}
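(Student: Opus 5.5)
To prove that $\mathcal{M}$ is upper semicontinuous, we will verify the sequential criterion for upper semicontinuity from Proposition~\ref{p2.01}. We argue by contradiction. Suppose $\mathcal{M}$ fails to be u.s.c.\ at some $(\zeta,\eta)$. Then there exist an open set $V\subset C(I,E)\times C(I,H)$ with $\mathcal{M}(\zeta,\eta)\subset V$, a sequence $(\zeta_n,\eta_n)\to(\zeta,\eta)$, and pairs $(x_n,u_n)\in\mathcal{M}(\zeta_n,\eta_n)\setminus V$ for all $n$ (after passing to a subsequence). The goal is to extract from $(x_n,u_n)$ a subsequence converging to some element of $\mathcal{M}(\zeta,\eta)$. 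Since the complement of $V$ is closed, the limit would then lie outside $V$, which contradicts $\mathcal{M}(\zeta,\eta)\subset V$.

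The first step is compactness. Set $D:=\{(\zeta_n,\eta_n)\}\cup\{(\zeta,\eta)\}$, which is bounded. Then we repeat verbatim the argument of Step~2 in the proof of Theorem~\ref{t5.01}:
\begin{itemize}
\item the uniform bounds $a_D,b_D$ of Theorem~\ref{t4.01} together with $H(\widetilde{B})$(iii) and Arzel\`a--Ascoli give relative compactness of $\{u_n\}$ in $C(I,H)$;
\item the compactness of the Cauchy operator for $T_\zeta$ and the semigroup convergence in $H(\widetilde{A})$ (the $\varepsilon_{n,\delta}$, $\delta$ splitting) give relative compactness of $\{x_n\}$ in $C(I,E)$.
\end{itemize}
That argument used only $(x_n^*,u_n^*)\in\mathcal{S}(\zeta_n,\eta_n)$ and never optimality, so it applies unchanged. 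After passing to a subsequence, $(x_n,u_n)\to(\bar x,\bar u)$ in $C(I,E)\times C(I,H)$. The upper Kuratowski inclusion in Theorem~\ref{t4.02} then yields $(\bar x,\bar u)\in\mathcal{S}(\zeta,\eta)$.

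The second step is optimality of the limit. With the common majorant $q_D$ from \eqref{e5.06}, hypothesis $H(\ell)$(v), continuity of $\varphi$, and dominated convergence, we get $\mathcal{J}(x_n,u_n,\zeta_n,\eta_n)\to\mathcal{J}(\bar x,\bar u,\zeta,\eta)$. The left-hand side equals $m(\zeta_n,\eta_n)$, which converges to $m(\zeta,\eta)$ by Theorem~\ref{t5.01}. Hence $\mathcal{J}(\bar x,\bar u,\zeta,\eta)=m(\zeta,\eta)$, so $(\bar x,\bar u)\in\mathcal{M}(\zeta,\eta)$. Because the complement of $V$ is closed and contains every $(x_n,u_n)$, it also contains $(\bar x,\bar u)$. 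This gives the contradiction.

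The main obstacle is the joint compactness of $\{(x_n,u_n)\}$ across varying parameters. Compactness of each single set $\mathcal{S}(\zeta_n,\eta_n)$ is not enough. What we need is the uniform estimate of Theorem~\ref{t4.01} on the bounded set $D$, together with the $\delta$-splitting to handle the parameter-dependent semigroups. Since this was already carried out for arbitrary feasible sequences in Theorem~\ref{t5.01}, we only need to cite it carefully. A minor point is that Proposition~\ref{p2.01}(iii) is formulated via open sets, so the contradiction has to be set up with the sequence chosen outside $V$, as above. Alternatively, since the argument in fact shows that $\mathcal{M}$ has compact values and closed graph locally, one could phrase the conclusion through that characterization instead.
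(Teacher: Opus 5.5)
Your proof is correct and follows the paper's argument essentially step by step. You use the same relative-compactness argument from Step~2 of Theorem~\ref{t5.01}, the upper Kuratowski inclusion of Theorem~\ref{t4.02}, dominated convergence with the common majorant, and continuity of $m$ to show the limit is optimal. The only difference is cosmetic: the paper verifies criterion (ii) of Proposition~\ref{p2.01} directly, showing that $\mathcal{M}^{-}(\mathcal{D})$ is closed for every closed $\mathcal{D}$, whereas you argue by contradiction via criterion (iii), with $\mathcal{D}$ taken to be the complement of $V$, which amounts to the same thing.
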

	
	\begin{proof}
		By Proposition \ref{p2.01}, $\mathcal{M}$ is u.s.c. if for any closed subset $\mathcal{D} \subseteq C(I, E) \times C(I, H)$, the set
		\begin{equation}\label{e5.12}
		\mathcal{M}^{-}(\mathcal{D}) := \left\{ (\zeta, \eta) \in \Lambda \times \Xi \mid \mathcal{M}(\zeta, \eta) \cap \mathcal{D} \neq \emptyset \right\}
		\end{equation}
		is closed in $\Lambda \times \Xi$.
		
		Let $\{(\zeta_n, \eta_n)\} \subseteq \mathcal{M}^{-}(\mathcal{D})$ converge to $(\zeta, \eta)$. We show that $(\zeta, \eta) \in \mathcal{M}^{-}(\mathcal{D})$.
		For each $n$, choose $(x_n, u_n) \in \mathcal{M}(\zeta_n, \eta_n) \cap \mathcal{D}$. Then
		\begin{equation}\label{e5.13}
		(x_n, u_n) \in \mathcal{S}(\zeta_n, \eta_n)\cap\mathcal{D}
		\quad \text{and} \quad
		\mathcal{J}(x_n, u_n, \zeta_n, \eta_n) = m(\zeta_n, \eta_n).
		\end{equation}
		
		Set
		$$
		D_0 := \{(\zeta_n, \eta_n) \mid n \in \mathbb{N}\} \cup \{(\zeta, \eta)\}.
		$$
		Since $(\zeta_n, \eta_n) \to (\zeta, \eta)$, the set $D_0$ is bounded in $\Lambda \times \Xi$. For each $n$, choose
		$f_n \in \mathcal{P}_{F(\cdot,\cdot,\cdot,\zeta_n)}(x_n, u_n)_I$
		such that
		$$
		x_n(t) = T_{\zeta_n}(t)x_0(\zeta_n) + \int_0^t T_{\zeta_n}(t-s)f_n(s)\,ds.
		$$
		The uniform estimate in Theorem \ref{t4.01} yields
		$$
		\|x_n(t)\|_E + \|u_n(t)\| \leq a_{D_0}
		\quad \text{for all } t \in I,
		$$
		and
		$$
		\|\dot{u}_n(t)\| \leq b_{D_0}\bigl(1 + |\dot{\theta}(t)|\bigr)
		\quad \text{for a.e. } t \in I.
		$$
		Thus $\{u_n\}$ is equicontinuous on $I$. Since
		$$
		u_n(t) \in D\bigl(B(t, u_n(t), \eta_n)\bigr)
		\quad \text{and} \quad
		\|u_n(t)\| \leq a_{D_0},
		$$
		hypothesis $H(\widetilde{B})$(iii) implies that $\{u_n(t)\}_{n \in \mathbb{N}}$ is relatively compact in $H$ for every $t \in I$, and therefore $\{u_n\}$ is relatively compact in $C(I, H)$.
		
		Hypothesis $H(\widetilde{F})$(iii) shows that $\{f_n\}$ is bounded in $L^2(I, E)$. Define
		$$
		y_n(t) := T_\zeta(t)x_0(\zeta) + \int_0^t T_\zeta(t-s)f_n(s)\,ds.
		$$
		As above, the compactness of the Cauchy operator associated with $T_\zeta$ implies that $\{y_n\}$ is relatively compact in $C(I, E)$. Using the continuity of the initial data map and the semigroup continuity from $H(\widetilde{A})$, together with the estimate
		\begin{align*}
			&\sup_{t \in I}\int_0^t \|[T_{\zeta_n}(t-s) - T_\zeta(t-s)]f_n(s)\|_E\,ds \\
			&\leq \varepsilon_{n,\delta} b^{1/2}\sup_n\|f_n\|_{L^2(I, E)} + 2M_{D_0}\delta^{1/2}\sup_n\|f_n\|_{L^2(I, E)},
		\end{align*}
		where $\varepsilon_{n,\delta} := \sup_{r \in [\delta, b]}\|T_{\zeta_n}(r) - T_\zeta(r)\|_{\mathcal{L}(E)}$, we obtain
		$$
		\|x_n - y_n\|_{C(I, E)} \to 0.
		$$
		Hence $\{x_n\}$ is relatively compact in $C(I, E)$, and so $\{(x_n, u_n)\}$ is relatively compact in $C(I, E) \times C(I, H)$. Passing to a subsequence and using the upper semicontinuity statement in Theorem \ref{t4.02}, we may assume that
		\begin{equation}\label{e5.14}
			(x_n,u_n)\to(x,u)\quad\text{in }C(I,E)\times C(I,H),
			\qquad
			(x,u)\in\mathcal{S}(\zeta,\eta).
		\end{equation}
		Set
		$$
		q_{D_0}(t) := (1 + a_{D_0})\beta_{a_{D_0}}(t) + (1 + a_{D_0})\vartheta_{D_0}(t), \quad t \in I.
		$$
		Then $q_{D_0}\in L^1(I,\mathbb{R}_+)$ and
		\begin{equation}\label{e5.15}
		|\ell(t, x_n(t), u_n(t), \zeta_n, \eta_n)| \leq q_{D_0}(t)
		\quad \text{for a.e. } t \in I \text{ and all } n,
		\end{equation}
		Hypothesis $H(\ell)$(v), the continuity of $\varphi$, \eqref{e5.14}, and \eqref{e5.15} yield, by the Lebesgue Dominated Convergence Theorem,
		$$
		\mathcal{J}(x_n, u_n, \zeta_n, \eta_n) \to \mathcal{J}(x, u, \zeta, \eta).
		$$
		Combining this convergence with \eqref{e5.13} and the continuity of $m$ from Theorem \ref{t5.01}, we obtain
		$$
		\mathcal{J}(x, u, \zeta, \eta)
		=
		\lim_{n \to \infty} \mathcal{J}(x_n, u_n, \zeta_n, \eta_n)
		=
		\lim_{n \to \infty} m(\zeta_n, \eta_n)
		=
		m(\zeta, \eta).
		$$
		Thus $(x, u) \in \mathcal{M}(\zeta, \eta)$. Moreover, \eqref{e5.13}, \eqref{e5.14}, and the closedness of $\mathcal{D}$ imply $(x,u)\in\mathcal{D}$.
		
		Consequently, $(x, u) \in \mathcal{M}(\zeta, \eta) \cap \mathcal{D}$, so \eqref{e5.12} gives $(\zeta, \eta) \in \mathcal{M}^{-}(\mathcal{D})$. Thus, $\mathcal{M}^{-}(\mathcal{D})$ is closed, and Proposition \ref{p2.01} shows that $\mathcal{M}$ is upper semicontinuous.
	\end{proof}
	
\section{Optimal Control, Joint Parameter Design, and Compromise Robustness}\label{s6}
	
	In this section, building on the sensitivity analysis from Section \ref{s5}, we study a family of optimal control models for the coupled evolution inclusion. We begin with the basic problem in which the parameters $(\zeta, \eta)$ are fixed and the optimization variable is the external control $w$. We then extend the analysis to simultaneous parameter design and control, and finally to robust and Hurwicz-type compromise formulations under parameter uncertainty.
	
	We now return to the control-driven system \eqref{e1.03} introduced in Problem \ref{p2b}. Throughout this section, $Y$ denotes the control space and $\mathcal{W}_{ad}$ is the admissible-control set defined in the introduction. In the first part of this section, the parameters $(\zeta, \eta) \in \Lambda \times \Xi$ are regarded as fixed.
	
	With the admissible control set in place, the optimal control problem for a fixed parameter pair is formulated as follows.
	\begin{problem}\label{p5}
		For fixed parameters $(\zeta, \eta) \in \Lambda \times \Xi$, find an optimal control $w^* \in \mathcal{W}_{ad}$ such that
		$$
		\mathcal{F}_{\zeta,\eta}(w^*) = \inf_{w \in \mathcal{W}_{ad}} \mathcal{F}_{\zeta,\eta}(w),
		$$
		where
		$$
		\mathcal{F}_{\zeta,\eta}(w) := m_{\zeta,\eta}(w) + \mathcal{R}(w).
		$$
	\end{problem}
	Here, $m_{\zeta,\eta}(w)$ denotes the optimal value of the state-constrained problem associated with the control $w$ and the fixed parameters $(\zeta, \eta)$:
	$$
	m_{\zeta,\eta}(w) := \inf_{(x, u) \in \mathcal{S}_{\zeta,\eta}(w)} \mathcal{J}_{\zeta,\eta}(x, u),
	$$
	and the associated Bolza functional is
	$$
	\mathcal{J}_{\zeta,\eta}(x, u) := \int_0^b \ell(t, x(t), u(t), \zeta, \eta)\,dt + \varphi(x(b), u(b), \zeta, \eta),
	$$
	and the regularization term is defined by
	\begin{equation}\label{e6.02}
		\mathcal{R}(w) := \frac{1}{2} \int_0^b \|w(t)\|_Y^2 \, dt.
	\end{equation}
	
	To establish the existence of optimal controls, we use the direct method of the calculus of variations. The main point is to combine weak compactness in the control space with strong compactness of the corresponding state trajectories.
	
	Hypothesis $H(W)$ from Section \ref{s3} ensures that $\mathcal{W}_{ad}$ is nonempty, bounded, closed, and convex in $L^2(I, Y)$.
	
	The next lemma is the key compactness tool: weak convergence of controls in $L^2(I, Y)$ yields strong compactness of the associated state trajectories in $C(I, E) \times C(I, H)$. This relies on the compactness of the semigroup generated by $A_\zeta$ from hypothesis $H(\widetilde{A})$.
	
	\begin{lemma}\label{l6.01}
		Fix $(\zeta, \eta) \in \Lambda \times \Xi$ and assume that hypotheses $H(\widetilde{A})$, $H(\widetilde{B})$, $H(\widetilde{F})$, $H(\widetilde{G})$, and $H(W)$ are satisfied. Let $\{w_n\} \subset \mathcal{W}_{ad}$ be a sequence converging weakly to $w$ in $L^2(I, Y)$. Let $(x_n, u_n) \in \mathcal{S}_{\zeta,\eta}(w_n)$ be a sequence of solutions corresponding to $w_n$. Then $w \in \mathcal{W}_{ad}$, the sequence $\{(x_n, u_n)\}_n$ is relatively compact in $C(I, E) \times C(I, H)$, and any limit point $(x, u)$ belongs to $\mathcal{S}_{\zeta,\eta}(w)$.
	\end{lemma}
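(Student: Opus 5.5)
First, $w\in\mathcal{W}_{ad}$: by $H(W)$, $\mathcal{W}_{ad}$ is convex and strongly closed in $L^2(I,Y)$ (strong convergence gives an a.e. convergent subsequence, and $W(t)$ is closed). Mazur's lemma then makes it weakly closed, so $w_n\rightharpoonup w$ gives $w\in\mathcal{W}_{ad}$. In addition, $\|w_n\|_{L^2(I,Y)}\le\|\phi_W\|_{L^2}$, so $\{\mathcal{B}w_n\}$ is bounded in $L^2(I,E)$.

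Next, uniform estimates. The control enters only the $x$-equation as an additive $L^2$ source, bounded uniformly in $n$. I will rerun the a priori estimate of Theorem~\ref{t4.01} with $F$ replaced by $F+\mathcal{B}w_n$. This uses the growth bounds $H(\widetilde F)$(iii), $H(\widetilde G)$(iii) and $H(\widetilde B)$(ii), the uniform semigroup bound $M$, and Gronwall's inequality. It should give $\|x_n(t)\|_E+\|u_n(t)\|\le a$ on $I$ and $\|\dot u_n(t)\|\le b(1+|\dot\theta(t)|)$ a.e., with constants independent of $n$. The $u$-component is then handled exactly as in the proof of Theorem~\ref{t5.01}. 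Equicontinuity comes from the derivative bound, pointwise relative compactness from $H(\widetilde B)$(iii), and Arzel\`a--Ascoli gives relative compactness in $C(I,H)$.

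For the $x$-component, I write
$$x_n(t)=T_\zeta(t)x_0(\zeta)+\int_0^tT_\zeta(t-s)\bigl(f_n(s)+\mathcal{B}w_n(s)\bigr)\,ds,$$
with $f_n$ an $L^2$ selection of $F(\cdot,x_n,u_n,\zeta)$. The sequence $\{f_n+\mathcal{B}w_n\}$ is bounded in $L^2(I,E)$. Since $T_\zeta(t)$ is compact for $t>0$, the Cauchy operator $L^2(I,E)\to C(I,E)$ is compact, so $\{x_n\}$ is relatively compact in $C(I,E)$. Moreover, the Cauchy operator is linear and bounded, hence weak-to-weak continuous; being compact, it maps weakly convergent sequences to strongly convergent ones.

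Finally, the limit. Along a subsequence, $(x_n,u_n)\to(x,u)$ uniformly, $f_n\rightharpoonup f$ in $L^2(I,E)$, and $\dot u_n\rightharpoonup\dot u$ in $L^2(I,H)$. For the $x$-equation, $\mathcal{B}$ is bounded linear, so $\mathcal{B}w_n\rightharpoonup\mathcal{B}w$. By the compactness remark, the integral terms converge to $\int_0^tT_\zeta(t-s)(f+\mathcal{B}w)\,ds$. That $f(t)\in F(t,x(t),u(t),\zeta)$ a.e. follows from the Lipschitz property $H(\widetilde F)$(ii): $d(f_n(t),F(t,x(t),u(t),\zeta))\le k(t)\|(x_n,u_n)-(x,u)\|_\infty\to0$. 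Mazur's lemma together with the closed convex values then passes this to the weak limit.

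The $u$-inclusion is the main obstacle, because the operator depends on the state. I would choose $g_n\in G(t,x_n,u_n,\eta)$ and pass $g_n\rightharpoonup g\in G(t,x,u,\eta)$ by the same Mazur argument. Then $H(\widetilde B)$(i) gives $\operatorname{dis}(B(t,u_n(t),\eta),B(t,u(t),\eta))\le\lambda\|u_n(t)-u(t)\|\to0$. I would then use the pseudo-distance closedness of Lemma~\ref{l2.02}, in the integrated form via (ii)--(iii), since the velocities converge only weakly in $L^2$ and not pointwise. Concretely, for test functions $z(t)\in D(B(t,u(t),\eta))$ I would approximate by $z_n(t)\in D(B(t,u_n(t),\eta))$ with $B^0$ converging, which Lemma~\ref{l2.02}(ii) allows. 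Monotonicity then gives
$$\int_0^b\bigl\langle B^0(t,u_n,\eta)z_n-(g_n-\dot u_n),\,z_n-u_n\bigr\rangle\,dt\ge0.$$
Passing to the limit is legitimate because each pairing is strong times weak. Localizing in time and applying Lemma~\ref{l2.02}(iii) yields $g-\dot u\in B(t,u(t),\eta)u(t)$ a.e. Since this is the same closure argument used in \cite{zengduwang2026submitted} for Theorem~\ref{t4.02}, I could alternatively invoke that argument, noting that $w$ affects only the linear $x$-part. The initial conditions pass to the limit trivially, so $(x,u)\in\mathcal{S}_{\zeta,\eta}(w)$.
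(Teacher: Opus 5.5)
Through the closure of the $u$-inclusion, your plan is the paper's proof. The shared steps are: weak closedness of $\mathcal{W}_{ad}$; the a priori bounds of Theorem~\ref{t4.01} rerun with $\mathcal{B}w_n$ as an extra $L^2$ source; Arzel\`a--Ascoli for $\{u_n\}$ via $H(\widetilde{B})$(iii); compactness of the Cauchy operator for $\{x_n\}$; and a Mazur/Hausdorff-distance argument giving $f$ and $g$.

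The routes differ only at the last step. The paper works at fixed $t$. It asserts that Mazur's lemma plus the pointwise bound on $\omega_n=g_n-\dot u_n$ forces $\omega_n(t)\rightharpoonup\omega(t)$ for a.e.\ $t$. That inference is invalid: $\sin(nt)$ is bounded and tends weakly to $0$ in $L^2$, but diverges for a.e.\ $t$. So you are right that the velocities converge only weakly.

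Your integrated Minty argument avoids that error, but as written it leaves three things unproved.
(1) Lemma~\ref{l2.02}(iii) presupposes $u(t)\in D(B(t,u(t),\eta))$, which you never show. It follows from Lemma~\ref{l2.02}(i) at fixed $t$, applied to the bounded sequence $B^0(t,u_n(t),\eta)u_n(t)$ along a $t$-dependent weakly convergent subsequence.
(2) Lemma~\ref{l2.02}(ii) is a statement at fixed $t$. It gives neither measurability of $t\mapsto z_n(t)$ nor an integrable bound, and your integral and the strong-times-weak limit need both.
(3) Localizing in time yields the inequality only off a null set that depends on the test selection $z$. Lemma~\ref{l2.02}(iii), however, needs it at one fixed $t$ for every $\xi\in D(B(t,u(t),\eta))$. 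You would need either a countable family of measurable test selections rich enough for Minty's argument, or a measurable-selection argument.

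All three issues disappear if you linearize before applying Mazur, and this also repairs the paper's step. First fix Mazur combinations $\tilde\omega_k\in\operatorname{co}\{\omega_m:m\ge k\}$ with $\tilde\omega_k(t)\to\omega(t)$, after passing to a subsequence in $k$. Do this off a null set $N$ chosen before any test point, such that for $t\notin N$ we also have $\sup_m\|\omega_m(t)\|<\infty$ and $\omega_m(t)\in B(t,u_m(t),\eta)u_m(t)$ for all $m$. Now take $t\notin N$ and $\xi\in D(B(t,u(t),\eta))$, and let $\xi_m$ be given by Lemma~\ref{l2.02}(ii). Monotonicity gives
\[
\langle \omega_m(t),u(t)-\xi\rangle\ \ge\ \langle B^0(t,u(t),\eta)\xi,\,u(t)-\xi\rangle-\varepsilon_m .
\]
Here $\varepsilon_m\to0$, because $\|\omega_m(t)\|$ is bounded and $u_m(t)\to u(t)$, $\xi_m\to\xi$, $B^0(t,u_m(t),\eta)\xi_m\to B^0(t,u(t),\eta)\xi$ strongly. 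The left side is affine in $\omega_m(t)$, so the inequality passes to $\tilde\omega_k(t)$ with error $\sup_{m\ge k}\varepsilon_m$. Letting $k\to\infty$ gives the Minty inequality at $t$ for every $\xi$, and Lemma~\ref{l2.02}(iii) concludes. No measurability in $t$ is required.
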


	\begin{proof}
		Since $\mathcal{W}_{ad}$ is closed and convex in $L^2(I, Y)$, the weak convergence $w_n \rightharpoonup w$ implies $w \in \mathcal{W}_{ad}$. Moreover, hypothesis $H(W)$ yields
		$$
		\|w_n(t)\|_Y \leq \phi_W(t) \quad \text{for a.e. } t \in I \text{ and all } n \in \mathbb{N},
		$$
		so that $\{\mathcal{B}w_n\}$ is bounded in $L^2(I, E)$.
		
		Arguing as in the a priori estimates behind Theorem \ref{t4.01}, with the term $\mathcal{B}w_n$ treated as an additional source in the $x$-equation, we obtain constants $a, b > 0$, independent of $n$, such that
		\begin{equation}\label{e6.03}
			\|x_n(t)\|_E+\|u_n(t)\|\leq a\quad(t\in I),
			\qquad
			\|\dot u_n(t)\|\leq b\bigl(1+|\dot\theta(t)|\bigr)\quad\text{a.e. }t\in I.
		\end{equation}
		In particular, \eqref{e6.03} implies that $\{u_n\}$ is equicontinuous on $I$. Since
		$$
		u_n(t) \in D\bigl(B(t, u_n(t), \eta)\bigr)
		\quad \text{and} \quad
		\|u_n(t)\| \leq a,
		$$
		hypothesis $H(\widetilde{B})$(iii) implies that $\{u_n(t)\}_{n \in \mathbb{N}}$ is relatively compact in $H$ for every $t \in I$. Hence, by the Arzel\`a--Ascoli theorem, $\{u_n\}$ is relatively compact in $C(I, H)$.
		
		For each $n$, choose selections
		$$
		f_n \in \mathcal{P}_{F(\cdot, \cdot, \cdot, \zeta)}(x_n, u_n)_I
		\quad \text{and} \quad
		g_n \in L^2(I, H)
		$$
		such that $g_n(t) \in G(t, x_n(t), u_n(t), \eta)$ a.e. and
		\begin{equation}\label{e6.04}
		x_n(t) = T_\zeta(t)x_0(\zeta) + \int_0^t T_\zeta(t-s) f_n(s)\,ds + \int_0^t T_\zeta(t-s)\mathcal{B}w_n(s)\,ds.
		\end{equation}
		By the linear growth assumptions in $H(\widetilde{F})$ and $H(\widetilde{G})$, together with \eqref{e6.03}, the sequences $\{f_n\}$ and $\{g_n\}$ are bounded in $L^2(I, E)$ and $L^2(I, H)$, respectively.
		
		Let $\mathcal{G}_\zeta: L^2(I, E) \to C(I, E)$ be the Cauchy operator
		\begin{equation}\label{e6.05}
		(\mathcal{G}_\zeta f)(t) := \int_0^t T_\zeta(t-s)f(s)\,ds.
		\end{equation}
		We claim that $\mathcal{G}_\zeta$ is compact. Let $\{h_n\}$ be a bounded sequence in $L^2(I, E)$, and set $z_n := \mathcal{G}_\zeta h_n$. Then $\{z_n\}$ is uniformly bounded in $C(I, E)$. Moreover, for $0 \leq s < t \leq b$,
		\begin{align*}
			\|z_n(t) - z_n(s)\|_E
			&\leq \left\|(T_\zeta(t-s)-I)\int_0^s T_\zeta(s-r)h_n(r)\,dr\right\|_E + \int_s^t \|T_\zeta(t-r)h_n(r)\|_E\,dr \\
			&\leq \left\|(T_\zeta(t-s)-I)\int_0^s T_\zeta(s-r)h_n(r)\,dr\right\|_E + M_\zeta |t-s|^{1/2}\|h_n\|_{L^2(I, E)},
		\end{align*}
		so the strong continuity of the semigroup implies equicontinuity. For each fixed $t \in (0, b]$ and $\delta \in (0, t)$, we split
		$$
		z_n(t)
		=
		T_\zeta(\delta)\int_0^{t-\delta} T_\zeta(t-\delta-s)h_n(s)\,ds
		+
		\int_{t-\delta}^t T_\zeta(t-s)h_n(s)\,ds.
		$$
		The first term is relatively compact because $T_\zeta(\delta)$ is compact and the inner integrals form a bounded set in $E$, while the second term is uniformly small in $n$ when $\delta \downarrow 0$. Hence $\{z_n(t)\}$ is relatively compact in $E$ for every $t \in I$, and the Arzel\`a--Ascoli theorem shows that $\mathcal{G}_\zeta$ is compact. Define also
		$$
		(\widetilde{\mathcal{B}}v)(t) := \mathcal{B}v(t), \qquad v \in L^2(I, Y).
		$$
		Then $\widetilde{\mathcal{B}}:L^2(I,Y)\to L^2(I,E)$ is bounded, and compactness of $\mathcal{G}_\zeta$ gives
		\begin{equation}\label{e6.06}
			\mathcal{L}_\zeta:=\mathcal{G}_\zeta\circ\widetilde{\mathcal{B}}
			:L^2(I,Y)\to C(I,E)\ \text{compact},
			\qquad
			\mathcal{L}_\zeta w_n\to\mathcal{L}_\zeta w\ \text{in }C(I,E).
		\end{equation}
		Because $\{f_n\}$ is bounded and $\mathcal{G}_\zeta$ is compact, \eqref{e6.04}--\eqref{e6.06} imply that $\{x_n\}$ is relatively compact in $C(I,E)$. Therefore $\{(x_n,u_n)\}$ is relatively compact in $C(I,E)\times C(I,H)$.
		
		Let $(x,u)$ be a limit point. After passing to a subsequence, we may assume that
		\begin{equation}\label{e6.07}
			\begin{aligned}
				(x_n,u_n)&\to(x,u) &&\text{in }C(I,E)\times C(I,H),\\
				f_n&\rightharpoonup f &&\text{in }L^2(I,E),\\
				g_n&\rightharpoonup g,\qquad \dot u_n\rightharpoonup\dot u
				&&\text{in }L^2(I,H).
			\end{aligned}
		\end{equation}
		Passing to the limit in \eqref{e6.04} by \eqref{e6.05}--\eqref{e6.07} gives
		\begin{equation}\label{e6.08}
		x(t) = T_\zeta(t)x_0(\zeta) + \int_0^t T_\zeta(t-s)f(s)\,ds + \int_0^t T_\zeta(t-s)\mathcal{B}w(s)\,ds.
		\end{equation}
		By the convergence theorem for Aumann integrals and \eqref{e6.07}, $f(t)$ and $g(t)$ belong to the closed convex hulls of the corresponding weak outer limits for a.e. $t\in I$. Hypotheses $H(\widetilde{F})$(ii) and $H(\widetilde{G})$(ii), together with the uniform convergence in \eqref{e6.07}, imply that the Hausdorff distances from the approximating values to the limiting values tend to zero a.e. Since the limiting values are closed and convex, the weak outer limits are contained in them. Consequently,
		\begin{equation}\label{e6.09}
		f(t) \in F(t, x(t), u(t), \zeta)
		\quad \text{and} \quad
		g(t) \in G(t, x(t), u(t), \eta)
		\quad \text{for a.e. } t \in I.
		\end{equation}
		Finally, set
		\begin{equation}\label{e6.10}
			\omega_n:=-\dot u_n+g_n\rightharpoonup
			\omega:=-\dot u+g\quad\text{in }L^2(I,H).
		\end{equation}
		Mazur's theorem yields convex combinations of $\{\omega_n\}$ converging strongly to $\omega$ in $L^2(I,H)$ and, along a subsequence, pointwise a.e. Moreover, \eqref{e6.03} and the growth bound on $G$ give $\|\omega_n(t)\|\leq b(1+|\dot\theta(t)|)+M_G(1+a)$ a.e. Thus every weak cluster point of $\{\omega_n(t)\}$ coincides with the Mazur limit, and $\omega_n(t)\rightharpoonup\omega(t)$ in $H$ for a.e. $t\in I$. Fix such a $t$. Since
		$$
		\operatorname{dis}\bigl(B(t, u_n(t), \eta), B(t, u(t), \eta)\bigr)
		\leq
		\lambda\|u_n(t)-u(t)\| \to 0,
		$$
		Lemma \ref{l2.02}(i) yields $u(t) \in D(B(t, u(t), \eta))$. Let $\xi \in D(B(t, u(t), \eta))$. By Lemma \ref{l2.02}(ii), there exists $\xi_n \in D(B(t, u_n(t), \eta))$ such that
		$$
		\xi_n \to \xi
		\quad \text{and} \quad
		B^0(t, u_n(t), \eta)\xi_n \to B^0(t, u(t), \eta)\xi
		\quad \text{in } H.
		$$
		Since $\omega_n(t) \in B(t, u_n(t), \eta)(u_n(t))$, monotonicity gives
		$$
		\langle \omega_n(t), u_n(t)-\xi_n \rangle
		\geq
		\langle B^0(t, u_n(t), \eta)\xi_n, u_n(t)-\xi_n \rangle.
		$$
		Using the pointwise weak convergence following from \eqref{e6.10}, the strong convergence in \eqref{e6.07}, and $\xi_n\to\xi$, we pass to the limit to obtain
		$$
		\langle \omega(t), u(t)-\xi \rangle
		\geq
		\langle B^0(t, u(t), \eta)\xi, u(t)-\xi \rangle.
		$$
		Lemma \ref{l2.02}(iii) therefore implies
		\begin{equation}\label{e6.11}
		-\dot{u}(t) + g(t) = \omega(t) \in B(t, u(t), \eta)(u(t)) \quad \text{for a.e. } t \in I.
		\end{equation}
		Equations \eqref{e6.08}, \eqref{e6.09}, and \eqref{e6.11} show that $(x,u)$ is a mild solution of \eqref{e1.03} corresponding to $w$; hence
		$$
		(x, u) \in \mathcal{S}_{\zeta,\eta}(w).
		$$
	\end{proof}
	
	We now establish the lower semicontinuity of the inner value function.
	
	\begin{proposition}\label{p6.01}
		Assume that the hypotheses of Lemma \ref{l6.01} are satisfied. If, in addition, hypotheses $H(\ell)$ and $H(\varphi)$ hold, then the value function $m_{\zeta,\eta}: \mathcal{W}_{ad} \to \mathbb{R}$ is sequentially lower semicontinuous with respect to the weak topology of $L^2(I, Y)$. Specifically,
		$$
		w_n \rightharpoonup w \text{ in } L^2(I, Y) \implies m_{\zeta,\eta}(w) \leq \liminf_{n \to \infty} m_{\zeta,\eta}(w_n).
		$$
	\end{proposition}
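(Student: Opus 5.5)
The plan is the standard liminf argument: choose near-optimal (in fact optimal) states for each control, use Lemma \ref{l6.01} to extract a convergent subsequence whose limit is feasible for $w$, and pass to the limit in $\mathcal{J}_{\zeta,\eta}$ by dominated convergence.

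\emph{Step 1: existence of optimal states for each control.} Let $w_n \rightharpoonup w$ in $L^2(I,Y)$ with $\{w_n\}\subset\mathcal{W}_{ad}$. By Lemma \ref{l6.01}, $w\in\mathcal{W}_{ad}$.

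I first need $m_{\zeta,\eta}(v)$ to be attained and finite for every $v\in\mathcal{W}_{ad}$. Fix $v$ and apply Lemma \ref{l6.01} to the constant sequence $w_n\equiv v$. This shows that $\mathcal{S}_{\zeta,\eta}(v)$ is compact in $C(I,E)\times C(I,H)$, provided it is nonempty. Nonemptiness follows by repeating the existence argument of Theorem \ref{t4.01} with the extra source term $\mathcal{B}v\in L^2(I,E)$; I take this as implicit in the paper's setup, as in the proof of Lemma \ref{l6.01}.

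The a priori bound \eqref{e6.03} gives a radius $a$, uniform over all of $\mathcal{W}_{ad}$, because it only uses $\|w(t)\|_Y\le\phi_W(t)$. Exactly as in the existence proposition of Section \ref{s5}, hypotheses $H(\ell)$(ii),(iv) then give an integrable majorant
$$|\ell(t,x(t),u(t),\zeta,\eta)|\le (1+a)\bigl(\beta_a(t)+\vartheta_{D_0}(t)\bigr)=:q(t),$$
with $D_0=\{(\zeta,\eta)\}$. This majorant is valid for every trajectory of every admissible control. Combined with $H(\ell)$(v), $H(\varphi)$, and dominated convergence, it shows that $\mathcal{J}_{\zeta,\eta}$ is continuous along uniformly convergent sequences of such trajectories. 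Hence $m_{\zeta,\eta}(v)\in\mathbb{R}$ is attained.

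\emph{Step 2: the liminf argument.} Pass to a subsequence realizing $\liminf_n m_{\zeta,\eta}(w_n)$; weak convergence to $w$ is preserved. Choose $(x_n,u_n)\in\mathcal{S}_{\zeta,\eta}(w_n)$ with $\mathcal{J}_{\zeta,\eta}(x_n,u_n)=m_{\zeta,\eta}(w_n)$. By Lemma \ref{l6.01}, a further subsequence converges in $C(I,E)\times C(I,H)$ to some $(x,u)\in\mathcal{S}_{\zeta,\eta}(w)$. Uniform convergence gives pointwise convergence of the integrand via $H(\ell)$(v), and $q$ dominates, so the integrals converge. Also $(x_n(b),u_n(b))\to(x(b),u(b))$, and $\varphi$ is continuous. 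Therefore
$$\liminf_{n} m_{\zeta,\eta}(w_n)=\lim_n \mathcal{J}_{\zeta,\eta}(x_n,u_n)=\mathcal{J}_{\zeta,\eta}(x,u)\ge m_{\zeta,\eta}(w).$$

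\emph{Main obstacle.} The only delicate point is the uniformity in $n$: the bound $a$ and the majorant $q$ must not depend on the control. This is handled by the integrable bound $\phi_W$ from $H(W)$ entering the estimate \eqref{e6.03}. Everything else, namely compactness and closedness of the graph, is already provided by Lemma \ref{l6.01}. If attainment in Step 1 were not wanted, one could instead take $\varepsilon_n$-minimizers with $\varepsilon_n=1/n$ and run the same argument; this avoids needing compactness of each $\mathcal{S}_{\zeta,\eta}(w_n)$ but still requires its nonemptiness.
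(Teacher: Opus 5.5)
Your proof is correct and follows the paper's argument. You extract (near-)minimizing states, use Lemma \ref{l6.01} to get a subsequence converging in $C(I,E)\times C(I,H)$ to a limit in $\mathcal{S}_{\zeta,\eta}(w)$, and pass to the limit in $\mathcal{J}_{\zeta,\eta}$ by dominated convergence with a majorant from $H(\ell)$(ii),(iv). The only difference is that the paper works directly with $1/n$-minimizers (your fallback option), so your Step 1 on attainment of $m_{\zeta,\eta}(v)$ is valid but not needed.
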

	
	\begin{proof}
		Let $w_n\rightharpoonup w$ in $L^2(I,Y)$ and set $\underline m:=\liminf_{n\to\infty}m_{\zeta,\eta}(w_n)$. Passing to a subsequence without relabeling and choosing $(x_n,u_n)\in\mathcal{S}_{\zeta,\eta}(w_n)$ by the definition of the infimum, we may arrange that
		\begin{equation}\label{e6.12}
			\begin{aligned}
				m_{\zeta,\eta}(w_n)
				&\leq\mathcal{J}_{\zeta,\eta}(x_n,u_n)
				\leq m_{\zeta,\eta}(w_n)+\frac1n,\\
				m_{\zeta,\eta}(w_n)&\to\underline m.
			\end{aligned}
		\end{equation}
		By Lemma \ref{l6.01}, after passing to a further subsequence if necessary, we may assume that
		\begin{equation}\label{e6.13}
			(x_n,u_n)\to(x,u)\quad\text{in }C(I,E)\times C(I,H),
			\qquad
			(x,u)\in\mathcal{S}_{\zeta,\eta}(w).
		\end{equation}
		
		Choose $R > 0$ such that
		$$
		\|x_n\|_{C(I,E)} + \|u_n\|_{C(I,H)} + \|x\|_{C(I,E)} + \|u\|_{C(I,H)} \leq R
		\quad \text{for all } n \in \mathbb{N}.
		$$
		Set
		$$
		q_{\zeta,\eta}(t) := (1 + R)\beta_R(t) + (1 + R)\vartheta_{\{(\zeta,\eta)\}}(t), \quad t \in I.
		$$
		Then $q_{\zeta,\eta}\in L^1(I,\mathbb{R}_+)$, and hypotheses $H(\ell)$(ii) and $H(\ell)$(iv) imply
		\begin{equation}\label{e6.14}
		|\ell(t, x_n(t), u_n(t), \zeta, \eta)| \leq q_{\zeta,\eta}(t)
		\quad \text{for a.e. } t \in I \text{ and all } n.
		\end{equation}
		Hypothesis $H(\ell)$(v), the continuity of $\varphi$, \eqref{e6.13}, and \eqref{e6.14} yield, by the Lebesgue Dominated Convergence Theorem,
		\begin{equation}\label{e6.15}
		\mathcal{J}_{\zeta,\eta}(x_n, u_n) \to \mathcal{J}_{\zeta,\eta}(x, u).
		\end{equation}
		Combining \eqref{e6.12}, \eqref{e6.13}, and \eqref{e6.15} gives
		$$
		m_{\zeta,\eta}(w)
		\leq\mathcal{J}_{\zeta,\eta}(x,u)
		=\underline m
		=\liminf_{n\to\infty}m_{\zeta,\eta}(w_n).
		$$
	\end{proof}
	
	With these preparations, we are now in a position to state the main existence theorem for the optimal control problem under a fixed parameter configuration.
	
	\begin{theorem}\label{t6.01}
		Suppose that the hypotheses of Proposition \ref{p6.01} hold. Then, Problem \ref{p5} admits at least one optimal control $w^* \in \mathcal{W}_{ad}$.
	\end{theorem}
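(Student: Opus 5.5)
We prove the theorem by the direct method in the weak topology of $L^2(I,Y)$, with Proposition \ref{p6.01} supplying lower semicontinuity of the inner value and weak lower semicontinuity of the norm handling $\mathcal{R}$.

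\emph{Step 1: finiteness of $\mathcal{F}_{\zeta,\eta}$.} By $H(W)$ and measurable selection, $\mathcal{W}_{ad}\neq\emptyset$. For each $w\in\mathcal{W}_{ad}$, $\mathcal{S}_{\zeta,\eta}(w)$ is nonempty. This follows from Theorem \ref{t4.01}, with the existence argument repeated after absorbing $\mathcal{B}w$ into the $x$-perturbation: $F+\mathcal{B}w(t)$ still satisfies $H(\widetilde F)$ with $M_F$ enlarged by an $L^2$ term. Applying Lemma \ref{l6.01} to the constant sequence $w_n=w$ shows that $\mathcal{S}_{\zeta,\eta}(w)$ is compact in $C(I,E)\times C(I,H)$. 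By the a priori bound \eqref{e6.03}, its elements are bounded by a constant $a$ independent of $w\in\mathcal{W}_{ad}$, since $\|w(t)\|_Y\le\phi_W(t)$ uniformly.

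With $R=a$, the majorant $q(t)=(1+R)\beta_R(t)+(1+R)\vartheta_{\{(\zeta,\eta)\}}(t)$ from $H(\ell)$(ii),(iv) gives
\[
|\mathcal{J}_{\zeta,\eta}(x,u)|\le \|q\|_{L^1}+\sup\{|\varphi(y,v,\zeta,\eta)|\},
\]
where the supremum runs over the terminal values $(y,v)=(x(b),u(b))$ of all admissible trajectories. This terminal supremum is the one delicate point, because continuity of $\varphi$ alone does not bound it on a merely bounded set. I would handle it through compactness. The terminal values $u(b)$ lie in a ball-relatively compact domain by $H(\widetilde B)$(iii), and the values $x(b)$ lie in a relatively compact set because of the compact Cauchy operator in Lemma \ref{l6.01}. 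Alternatively, it suffices to get $m_{\zeta,\eta}(w)>-\infty$ for each $w$, via compactness of $\mathcal{S}_{\zeta,\eta}(w)$, together with a uniform lower bound from the minimizing-sequence argument below. Either way $m_{\zeta,\eta}(w)\in\mathbb{R}$, so $\mathcal{F}_{\zeta,\eta}$ is finite on $\mathcal{W}_{ad}$ and $\mu:=\inf_{\mathcal{W}_{ad}}\mathcal{F}_{\zeta,\eta}<+\infty$.

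\emph{Step 2: minimizing sequence and weak limit.} Take $w_n\in\mathcal{W}_{ad}$ with $\mathcal{F}_{\zeta,\eta}(w_n)\to\mu$. Since $\mathcal{W}_{ad}$ is bounded, closed and convex in the Hilbert space $L^2(I,Y)$, it is weakly sequentially compact. Pass to a subsequence with $w_n\rightharpoonup w^*$; Lemma \ref{l6.01} gives $w^*\in\mathcal{W}_{ad}$.

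\emph{Step 3: conclusion.} Proposition \ref{p6.01} yields
\[
m_{\zeta,\eta}(w^*)\le\liminf_{n\to\infty} m_{\zeta,\eta}(w_n).
\]
Since $\mathcal{R}$ is convex and continuous, it is weakly lower semicontinuous, so $\mathcal{R}(w^*)\le\liminf_{n\to\infty}\mathcal{R}(w_n)$. Adding the two inequalities and using superadditivity of $\liminf$,
\[
\mathcal{F}_{\zeta,\eta}(w^*)\le\liminf_{n\to\infty}\mathcal{F}_{\zeta,\eta}(w_n)=\mu.
\]
This also shows $\mu>-\infty$, because the left side is a real number by Step 1. Hence $\mathcal{F}_{\zeta,\eta}(w^*)=\mu$ and $w^*$ is optimal.

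The main obstacle is Step 1: verifying that $m_{\zeta,\eta}$ is real-valued, i.e.\ that $\mathcal{S}_{\zeta,\eta}(w)$ is nonempty. This requires invoking the existence theory of Theorem \ref{t4.01} for the controlled system rather than for Problem \ref{p2a} itself. The remaining steps follow directly from the results already established.
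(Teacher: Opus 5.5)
Your proof is correct and follows essentially the same direct-method argument as the paper: a minimizing sequence, weak sequential compactness of $\mathcal{W}_{ad}$ in $L^2(I,Y)$, and weak lower semicontinuity of $m_{\zeta,\eta}$ (Proposition~\ref{p6.01}) and of $\mathcal{R}$. Your Step~1, which establishes $\mathcal{S}_{\zeta,\eta}(w)\neq\emptyset$ and $m_{\zeta,\eta}(w)\in\mathbb{R}$, makes explicit a point the paper takes for granted when it treats $m_{\zeta,\eta}$ as real-valued.
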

	
	\begin{proof}
		Let $\{w_k\}\subset\mathcal{W}_{ad}$ be a minimizing sequence for $\mathcal{F}_{\zeta,\eta}$. Hypothesis $H(W)$ and reflexivity of $L^2(I,Y)$ imply that $\mathcal{W}_{ad}$ is sequentially weakly compact. Hence, after passing to a subsequence,
		\begin{equation}\label{e6.16}
			\mathcal{F}_{\zeta,\eta}(w_k)\to
			\inf_{w\in\mathcal{W}_{ad}}\mathcal{F}_{\zeta,\eta}(w),
			\qquad
			w_k\rightharpoonup w^*\in\mathcal{W}_{ad}.
		\end{equation}
		By \eqref{e6.02}, $\mathcal{R}$ is sequentially weakly lower semicontinuous. Proposition \ref{p6.01} gives the same property for $m_{\zeta,\eta}$, and therefore
		\begin{equation}\label{e6.17}
			\mathcal{F}_{\zeta,\eta}(w^*)
			\leq\liminf_{k\to\infty}\mathcal{F}_{\zeta,\eta}(w_k).
		\end{equation}
		Equations \eqref{e6.16}--\eqref{e6.17} yield
		$\mathcal{F}_{\zeta,\eta}(w^*)\leq\inf_{w\in\mathcal{W}_{ad}}\mathcal{F}_{\zeta,\eta}(w)$.
		The reverse inequality follows from $w^*\in\mathcal{W}_{ad}$, so equality holds and $w^*$ solves Problem \ref{p5}.
	\end{proof}
	
	We next consider a joint design problem in which the control and the structural parameters are optimized simultaneously. This extension is natural in applications where one seeks both a suitable input signal and a favorable parameter configuration minimizing the total performance index.
	
	Let $\mathcal{P} \subset \Lambda \times \Xi$ be a nonempty compact set representing the admissible design parameter space. The simultaneous optimal design and control problem is formulated as follows:
	
	\begin{problem}\label{p6}
		Find a control-parameter tuple $(w^*, \zeta^*, \eta^*) \in \mathcal{W}_{ad} \times \mathcal{P}$ such that:
		$$
		\mathcal{F}_{joint}(w^*, \zeta^*, \eta^*) = \inf_{(w, \zeta, \eta) \in \mathcal{W}_{ad} \times \mathcal{P}} \left\{ m_{\zeta, \eta}(w) + \mathcal{R}(w) \right\},
		$$
		where $m_{\zeta, \eta}(w)$ is the optimal value of the inner problem corresponding to the parameters $(\zeta, \eta)$ and control $w$.
	\end{problem}
	Equivalently,
	$$
	\mathcal{F}_{joint}(w, \zeta, \eta) := m_{\zeta, \eta}(w) + \mathcal{R}(w)
	\qquad
	\text{for } (w, \zeta, \eta) \in \mathcal{W}_{ad} \times \mathcal{P}.
	$$
	
	To establish solvability of Problem \ref{p6}, we combine weak compactness in the control space with compactness of the associated state trajectories under simultaneous perturbations of the control and the parameters.
	
	\begin{theorem}\label{t6.02}
		Suppose that the hypotheses of Proposition \ref{p6.01} hold, and assume in addition hypothesis $H(0)$. If the parameter set $\mathcal{P} \subset \Lambda \times \Xi$ is compact, then Problem \ref{p6} admits at least one optimal triplet $(w^*, \zeta^*, \eta^*) \in \mathcal{W}_{ad} \times \mathcal{P}$.
	\end{theorem}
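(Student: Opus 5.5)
The proof follows the direct method on the product $\mathcal{W}_{ad}\times\mathcal{P}$, with weak topology in the control and metric topology in the parameters. First I check that the infimum $\mu:=\inf_{\mathcal{W}_{ad}\times\mathcal{P}}\mathcal{F}_{joint}$ is finite. Since $\mathcal{P}$ is compact, it is bounded. By $H(W)$, every admissible control satisfies $\|w(t)\|_Y\le\phi_W(t)$. The a priori estimates behind Theorem~\ref{t4.01}, with $\mathcal{B}w$ treated as an extra source (as in \eqref{e6.03}), then give a constant $R$ bounding $\|x\|_{C(I,E)}+\|u\|_{C(I,H)}$ uniformly over all $(w,\zeta,\eta)\in\mathcal{W}_{ad}\times\mathcal{P}$ and all $(x,u)\in\mathcal{S}_{\zeta,\eta}(w)$. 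Hypotheses $H(\ell)$(ii),(iv) yield a common majorant $q(t)=(1+R)(\beta_R(t)+\vartheta_{\mathcal{P}}(t))$ for the running cost. The terminal values $(x(b),u(b),\zeta,\eta)$ range over a bounded set, but $\varphi$ is only continuous. So instead of claiming $\varphi$ is bounded, I argue by sequences: along any minimizing sequence the limit argument below shows the costs converge to a finite value. Hence $\mu>-\infty$, and $\mu<\infty$ is trivial.

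Next, take a minimizing sequence $(w_k,\zeta_k,\eta_k)$. By weak sequential compactness of $\mathcal{W}_{ad}$ and compactness of $\mathcal{P}$, pass to a subsequence with $w_k\rightharpoonup w^*$ in $L^2(I,Y)$, $w^*\in\mathcal{W}_{ad}$, and $(\zeta_k,\eta_k)\to(\zeta^*,\eta^*)\in\mathcal{P}$. Then choose near-optimal states $(x_k,u_k)\in\mathcal{S}_{\zeta_k,\eta_k}(w_k)$ with $\mathcal{J}_{\zeta_k,\eta_k}(x_k,u_k)\le m_{\zeta_k,\eta_k}(w_k)+1/k$. I then need a joint version of Lemma~\ref{l6.01}: the sequence $(x_k,u_k)$ is relatively compact in $C(I,E)\times C(I,H)$, and every limit point lies in $\mathcal{S}_{\zeta^*,\eta^*}(w^*)$.

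Proving this joint lemma is the main obstacle, because control and parameters vary simultaneously. For the $u$-component I would repeat the argument from Lemma~\ref{l6.01}: the derivative bound gives equicontinuity, and $H(\widetilde{B})$(iii) gives pointwise relative compactness, so Arzel\`a--Ascoli applies. For the $x$-component I would write
\[
x_k(t)=T_{\zeta_k}(t)x_0(\zeta_k)+\int_0^tT_{\zeta_k}(t-s)\bigl(f_k(s)+\mathcal{B}w_k(s)\bigr)\,ds
\]
and compare it with the same expression built from $T_{\zeta^*}$. The $\delta$-splitting estimate from the proof of Theorem~\ref{t5.01} (using $\varepsilon_{k,\delta}$, the bound $M_{\mathcal{P}}$, and $L^2$-boundedness of $f_k+\mathcal{B}w_k$), together with $H(0)$, shows that the difference tends to zero in $C(I,E)$. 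Compactness of $\mathcal{G}_{\zeta^*}$ and $\mathcal{L}_{\zeta^*}$ then gives relative compactness, and also $\mathcal{L}_{\zeta^*}w_k\to\mathcal{L}_{\zeta^*}w^*$.

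For identifying the limit:
\begin{enumerate}[(i)]
\item The selections $f_k\rightharpoonup f$ satisfy $f(t)\in F(t,x(t),u(t),\zeta^*)$. Here I would use the Hausdorff estimate $\mathcal{H}(F(t,x_k,u_k,\zeta_k),F(t,x,u,\zeta^*))\le k_D(t)(\|x_k-x\|_E+\|u_k-u\|)+\gamma_1(d_\Lambda(\zeta_k,\zeta^*))\rho_1$, followed by the Mazur/convexity argument. The same reasoning handles $G$ via $H(\widetilde{G})$(ii),(iv).
\item For the monotone part, $\operatorname{dis}(B(t,u_k(t),\eta_k),B(t,u(t),\eta^*))\le\lambda\|u_k(t)-u(t)\|+\lambda'd_\Xi(\eta_k,\eta^*)\to0$. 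Lemma~\ref{l2.02}(i)--(iii) then closes the inclusion exactly as in the proof of \eqref{e6.11}.
\end{enumerate}

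Finally, pass to the limit in the cost along a further subsequence with $(x_k,u_k)\to(\bar x,\bar u)\in\mathcal{S}_{\zeta^*,\eta^*}(w^*)$. Hypothesis $H(\ell)$(v), the joint continuity of $\varphi$ in states and parameters, and dominated convergence with majorant $q$ give $\mathcal{J}_{\zeta_k,\eta_k}(x_k,u_k)\to\mathcal{J}_{\zeta^*,\eta^*}(\bar x,\bar u)\ge m_{\zeta^*,\eta^*}(w^*)$. Hence $\liminf m_{\zeta_k,\eta_k}(w_k)\ge m_{\zeta^*,\eta^*}(w^*)$, so $\mathcal{F}_{joint}$ is sequentially lower semicontinuous along this sequence. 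Combined with weak lower semicontinuity of $\mathcal{R}$, this gives $\mathcal{F}_{joint}(w^*,\zeta^*,\eta^*)\le\mu$. The reverse inequality holds by admissibility, so $(w^*,\zeta^*,\eta^*)$ is an optimal triplet.
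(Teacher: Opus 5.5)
Your proposal is correct and follows essentially the same route as the paper's proof. The paper also takes a minimizing sequence with $w_k\rightharpoonup w^*$ and $(\zeta_k,\eta_k)\to(\zeta^*,\eta^*)$ and picks $1/k$-optimal states. It gets compactness of $u_k$ by Arzel\`a--Ascoli and of $x_k$ by the $\delta$-splitting comparison with $T_{\zeta^*}$. It then identifies the limit via the Hausdorff estimates and the Minty argument with the extra $\lambda' d_\Xi(\eta_k,\eta^*)$ term, and passes to the limit in the cost by dominated convergence. Your explicit check that the infimum is finite is a harmless extra that the paper leaves implicit in its closing chain of inequalities.
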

	
	\begin{proof}
		Let $\{(w_k,\zeta_k,\eta_k)\}\subset\mathcal{W}_{ad}\times\mathcal P$ be a minimizing sequence. Compactness of $\mathcal P$, hypothesis $H(W)$, and reflexivity of $L^2(I,Y)$ allow us to pass to a subsequence such that
		\begin{equation}\label{e6.18}
			\begin{aligned}
				\mathcal F_{\rm joint}(w_k,\zeta_k,\eta_k)
				&\to\inf_{\mathcal{W}_{ad}\times\mathcal P}\mathcal F_{\rm joint},\\
				(\zeta_k,\eta_k)&\to(\zeta^*,\eta^*)\in\mathcal P,\\
				w_k&\rightharpoonup w^*\in\mathcal{W}_{ad}
				\quad\text{in }L^2(I,Y).
			\end{aligned}
		\end{equation}
		For each $k$, choose $(x_k,u_k)$ by the definition of the inner infimum so that
		\begin{equation}\label{e6.19}
			(x_k,u_k)\in\mathcal S_{\zeta_k,\eta_k}(w_k),
			\qquad
			\mathcal J_{\zeta_k,\eta_k}(x_k,u_k)
			\leq m_{\zeta_k,\eta_k}(w_k)+\frac1k.
		\end{equation}
		
		Set $D_0:=\{(\zeta_k,\eta_k)\mid k\in\mathbb N\}\cup\{(\zeta^*,\eta^*)\}$. The control bound in $H(W)$ and the a priori estimates used for Theorem \ref{t4.01} give constants $a_0,b_0>0$, independent of $k$, such that
		\begin{equation}\label{e6.20}
			\|x_k(t)\|_E+\|u_k(t)\|\leq a_0\quad(t\in I),
			\qquad
			\|\dot u_k(t)\|\leq b_0\bigl(1+|\dot\theta(t)|\bigr)
			\quad\text{a.e. }t\in I.
		\end{equation}
		Thus $\{u_k\}$ is equicontinuous. Since
		$u_k(t)\in D(B(t,u_k(t),\eta_k))$, hypothesis $H(\widetilde B)$(iii), \eqref{e6.20}, and the Arzel\`a--Ascoli theorem show that $\{u_k\}$ is relatively compact in $C(I,H)$.
		
		Choose $f_k\in\mathcal P_{F(\cdot,\cdot,\cdot,\zeta_k)}(x_k,u_k)_I$ and
		$g_k\in L^2(I,H)$ with $g_k(t)\in G(t,x_k(t),u_k(t),\eta_k)$ a.e. Put
		$h_k:=f_k+\mathcal B w_k$. By the growth assumptions and \eqref{e6.20}, the sequences $\{f_k\}$, $\{g_k\}$, and $\{h_k\}$ are bounded in their respective $L^2$ spaces, and
		$$
		x_k(t)=T_{\zeta_k}(t)x_0(\zeta_k)
		+\int_0^tT_{\zeta_k}(t-s)h_k(s)\,ds.
		$$
		Define
		$$
		y_k(t):=T_{\zeta^*}(t)x_0(\zeta^*)
		+\int_0^tT_{\zeta^*}(t-s)h_k(s)\,ds.
		$$
		By the compactness of the Cauchy operator in \eqref{e6.05}, $\{y_k\}$ is relatively compact in $C(I,E)$. For $\delta\in(0,b)$, set
		$\varepsilon_{k,\delta}:=\sup_{r\in[\delta,b]}
		\|T_{\zeta_k}(r)-T_{\zeta^*}(r)\|_{\mathcal L(E)}$.
		The standard semigroup estimate gives
		\begin{align*}
			\|x_k-y_k\|_{C(I,E)}
			&\leq\sup_{t\in I}
			\|T_{\zeta_k}(t)x_0(\zeta_k)-T_{\zeta^*}(t)x_0(\zeta^*)\|_E\\
			&\quad+\varepsilon_{k,\delta}b^{1/2}\sup_k\|h_k\|_{L^2(I,E)}
			+2M_{D_0}\delta^{1/2}\sup_k\|h_k\|_{L^2(I,E)}.
		\end{align*}
		Hypotheses $H(\widetilde A)$ and $H(0)$ imply, first as $k\to\infty$ and then as $\delta\downarrow0$, that
		$\|x_k-y_k\|_{C(I,E)}\to0$. Hence $\{(x_k,u_k)\}$ is relatively compact. After passing to a further subsequence,
		\begin{equation}\label{e6.21}
			\begin{aligned}
				(x_k,u_k)&\to(x^*,u^*) &&\text{in }C(I,E)\times C(I,H),\\
				f_k&\rightharpoonup f &&\text{in }L^2(I,E),\\
				g_k&\rightharpoonup g,\qquad \dot u_k\rightharpoonup\dot u^*
				&&\text{in }L^2(I,H).
			\end{aligned}
		\end{equation}
		
		Using \eqref{e6.05}--\eqref{e6.06}, \eqref{e6.18}, and \eqref{e6.21}, we may pass to the limit in the mild equation and obtain
		$$
		x^*(t)=T_{\zeta^*}(t)x_0(\zeta^*)
		+\int_0^tT_{\zeta^*}(t-s)f(s)\,ds
		+\int_0^tT_{\zeta^*}(t-s)\mathcal Bw^*(s)\,ds.
		$$
		Hypotheses $H(\widetilde F)$(ii),(iv) and $H(\widetilde G)$(ii),(iv), together with \eqref{e6.18} and \eqref{e6.21}, imply a.e. convergence in Hausdorff distance of the corresponding values. The Aumann convergence theorem therefore yields
		$$
		f(t)\in F(t,x^*(t),u^*(t),\zeta^*),
		\qquad
		g(t)\in G(t,x^*(t),u^*(t),\eta^*)
		\quad\text{for a.e. }t\in I.
		$$
		Moreover, $\omega_k:=-\dot u_k+g_k\rightharpoonup
		\omega:=-\dot u^*+g$ in $L^2(I,H)$ by \eqref{e6.21}. Repeating the Mazur argument from Lemma \ref{l6.01}, we may work pointwise a.e. For such $t$, let
		$B_k:=B(t,u_k(t),\eta_k)$ and $B_*:=B(t,u^*(t),\eta^*)$. Then
		$$
		\operatorname{dis}(B_k,B_*)
		\leq\lambda\|u_k(t)-u^*(t)\|
		+\lambda'd_\Xi(\eta_k,\eta^*)\to0.
		$$
		The Minty argument based on Lemma \ref{l2.02}, used in the proof of Lemma \ref{l6.01}, now gives
		$-\dot u^*(t)+g(t)\in B(t,u^*(t),\eta^*)(u^*(t))$ a.e. Consequently,
		\begin{equation}\label{e6.22}
			(x^*,u^*)\in\mathcal S_{\zeta^*,\eta^*}(w^*).
		\end{equation}
		
		Set
		$q_{D_0}(t):=(1+a_0)\beta_{a_0}(t)
		+(1+a_0)\vartheta_{D_0}(t)$.
		Then $q_{D_0}\in L^1(I,\mathbb R_+)$ and
		\begin{equation}\label{e6.23}
			|\ell(t,x_k(t),u_k(t),\zeta_k,\eta_k)|
			\leq q_{D_0}(t)
			\quad\text{for a.e. }t\in I\text{ and all }k.
		\end{equation}
		Hypothesis $H(\ell)$(v), continuity of $\varphi$, and
		\eqref{e6.18}, \eqref{e6.21}, and \eqref{e6.23} yield, by dominated convergence,
		\begin{equation}\label{e6.24}
			\mathcal J_{\zeta_k,\eta_k}(x_k,u_k)
			\to\mathcal J_{\zeta^*,\eta^*}(x^*,u^*).
		\end{equation}
		Using \eqref{e6.22}, weak lower semicontinuity of $\mathcal R$, and
		\eqref{e6.19}, \eqref{e6.24}, we obtain
		\begin{align*}
			\mathcal F_{\rm joint}(w^*,\zeta^*,\eta^*)
			&\leq\mathcal J_{\zeta^*,\eta^*}(x^*,u^*)+\mathcal R(w^*)\\
			&\leq\liminf_{k\to\infty}
			\bigl(\mathcal J_{\zeta_k,\eta_k}(x_k,u_k)+\mathcal R(w_k)\bigr)\\
			&\leq\lim_{k\to\infty}\mathcal F_{\rm joint}(w_k,\zeta_k,\eta_k)
			=\inf_{\mathcal W_{ad}\times\mathcal P}\mathcal F_{\rm joint},
		\end{align*}
		where the last equality follows from \eqref{e6.18}. The reverse inequality follows from the admissibility in \eqref{e6.18}. Hence equality holds, and $(w^*,\zeta^*,\eta^*)$ solves Problem \ref{p6}.
	\end{proof}
	
	\begin{remark}
		Define $J(w,\zeta,\eta):=m_{\zeta,\eta}(w)+\mathcal R(w)$ and set
		\begin{equation}\label{e6.25}
			\begin{aligned}
				\alpha&:=\inf_{(w,\zeta,\eta)\in\mathcal W_{ad}\times\mathcal P}
				J(w,\zeta,\eta),\\
				\beta&:=\inf_{w\in\mathcal W_{ad}}
				\inf_{(\zeta,\eta)\in\mathcal P}J(w,\zeta,\eta).
			\end{aligned}
		\end{equation}
		Since $\alpha\leq J(w,\zeta,\eta)$ for every admissible triplet, taking the parameter infimum and then the control infimum gives $\alpha\leq\beta$. Conversely,
		$\inf_{(\zeta',\eta')\in\mathcal P}J(w,\zeta',\eta')
		\leq J(w,\zeta,\eta)$; taking first the control infimum and then the product-space infimum gives $\beta\leq\alpha$. Thus $\alpha=\beta$, and \eqref{e6.25} yields
		\begin{equation}\label{e6.26}
			\inf_{(w,\zeta,\eta)\in\mathcal W_{ad}\times\mathcal P}J(w,\zeta,\eta)
			=
			\inf_{w\in\mathcal W_{ad}}\inf_{(\zeta,\eta)\in\mathcal P}J(w,\zeta,\eta).
		\end{equation}
		Representation \eqref{e6.26} shows that Problem \ref{p6} may be viewed either as minimization over $\mathcal W_{ad}\times\mathcal P$ or as outer minimization in $w$ of the parameter-wise optimal value.
	\end{remark}
	
	The following auxiliary result extracts the common semicontinuity properties of the extremal value maps over a compact parameter set, used in both the robust and Hurwicz formulations below.
	
	\begin{proposition}\label{p6.02}
		Assume that the hypotheses of Proposition \ref{p6.01} hold, that hypothesis $H(0)$ is satisfied, and that the parameter set $\mathcal{P} \subset \Lambda \times \Xi$ is compact. Then:
		\begin{itemize}
			\item[\textup{(a)}] For every $w \in \mathcal{W}_{ad}$, the map $(\zeta,\eta) \mapsto m_{\zeta,\eta}(w)$ is continuous on $\Lambda \times \Xi$. In particular, the extrema
			\begin{equation}\label{e6.27}
			\varphi_{\inf}(w) := \inf_{(\zeta,\eta) \in \mathcal{P}} m_{\zeta,\eta}(w)
			\quad \text{and} \quad
			\varphi_{\sup}(w) := \sup_{(\zeta,\eta) \in \mathcal{P}} m_{\zeta,\eta}(w)
			\end{equation}
			are attained on $\mathcal{P}$.
			\item[\textup{(b)}] The map $\varphi_{\sup}: \mathcal{W}_{ad} \to \mathbb{R}$ is sequentially weakly lower semicontinuous.
			\item[\textup{(c)}] If, in addition, the map $(w,\zeta,\eta) \mapsto m_{\zeta,\eta}(w)$ is sequentially lower semicontinuous on $\mathcal{W}_{ad} \times \mathcal{P}$, where $\mathcal{W}_{ad}$ carries the weak topology of $L^2(I,Y)$ and $\mathcal{P}$ the metric topology, then $\varphi_{\inf}$ is also sequentially weakly lower semicontinuous.
		\end{itemize}
	\end{proposition}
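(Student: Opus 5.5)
For part (a), fix $w\in\mathcal W_{ad}$ and regard $\mathcal B w$ as an additional fixed $L^2$ source in the $x$-equation. The plan is to repeat the two-step argument of Theorem \ref{t5.01} with $\mathcal S(\zeta,\eta)$ replaced by $\mathcal S_{\zeta,\eta}(w)$.

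Let $(\zeta_n,\eta_n)\to(\zeta,\eta)$. For the upper estimate $\limsup m_{\zeta_n,\eta_n}(w)\le m_{\zeta,\eta}(w)$, I need the lower Painlev\'e--Kuratowski inclusion of Theorem \ref{t4.02} for the controlled system. I would obtain it by noting that $F(t,x,u,\zeta)+\mathcal Bw(t)$ still satisfies $H(\widetilde F)$. Measurability is preserved, the Lipschitz and parameter-continuity constants are unchanged, and the growth bound holds with the extra integrable term $\|\mathcal B\|\phi_W(t)$. The growth condition is stated with a constant $M_F$ rather than an $L^2$ function, so I take for granted that the a priori estimates of Theorem \ref{t4.01} tolerate an additive $L^2$ term; the paper already uses this in Lemma \ref{l6.01}. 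Existence of a minimizer in $\mathcal S_{\zeta,\eta}(w)$ follows exactly as in the existence proposition of Section \ref{s5}. The recovery sequence together with the common majorant $q_D$ and dominated convergence then gives the upper estimate.

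For the lower estimate, take near-optimal $(x_n,u_n)\in\mathcal S_{\zeta_n,\eta_n}(w)$. The compactness and closure argument of Theorem \ref{t6.02}, applied with constant control $w_k\equiv w$, gives a subsequence converging in $C(I,E)\times C(I,H)$ to an element of $\mathcal S_{\zeta,\eta}(w)$. Dominated convergence of $\mathcal J$ then yields $\liminf m_{\zeta_n,\eta_n}(w)\ge m_{\zeta,\eta}(w)$. Since $\mathcal P$ is compact, the continuous function $(\zeta,\eta)\mapsto m_{\zeta,\eta}(w)$ attains its infimum and supremum on $\mathcal P$, which gives the attainment claim for \eqref{e6.27}.

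For (b), I would observe that $\varphi_{\sup}$ is a pointwise supremum of the family $\{m_{\zeta,\eta}\}_{(\zeta,\eta)\in\mathcal P}$. By Proposition \ref{p6.01}, each member is sequentially weakly lower semicontinuous, since the hypotheses of Lemma \ref{l6.01} hold for every fixed $(\zeta,\eta)$. Concretely, if $w_n\rightharpoonup w$, then for each fixed $(\zeta,\eta)\in\mathcal P$ we have $m_{\zeta,\eta}(w)\le\liminf_n m_{\zeta,\eta}(w_n)\le\liminf_n\varphi_{\sup}(w_n)$. Taking the supremum over $\mathcal P$ gives the claim. Finiteness of $\varphi_{\sup}$ comes from (a).

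For (c), let $w_n\rightharpoonup w$ and pass to a subsequence realizing $\liminf\varphi_{\inf}(w_n)$. By (a), choose $(\zeta_n,\eta_n)\in\mathcal P$ with $\varphi_{\inf}(w_n)=m_{\zeta_n,\eta_n}(w_n)$. Compactness of $\mathcal P$ gives a further subsequence with $(\zeta_n,\eta_n)\to(\zeta,\eta)\in\mathcal P$. The assumed joint sequential lower semicontinuity then yields
$$\varphi_{\inf}(w)\le m_{\zeta,\eta}(w)\le\liminf m_{\zeta_n,\eta_n}(w_n)=\liminf\varphi_{\inf}(w_n).$$
This joint property is in fact what the proof of Theorem \ref{t6.02} establishes, although (c) simply assumes it.

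The main obstacle is the upper half of (a). It requires transferring the lower Painlev\'e--Kuratowski inclusion of Theorem \ref{t4.02} to the controlled system, and this hinges on checking that the $w$-augmented multifunction fits $H(\widetilde F)$, or that the companion paper's proof tolerates the $L^2$ source. I would address this by the absorption argument above, flagging the growth condition as the only point needing care.
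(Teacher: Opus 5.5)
Your proposal is correct and matches the paper's proof. In (a) the paper likewise treats $F(t,x,u,\zeta)+\mathcal{B}w(t)$ as a translated multifunction whose extra $L^2$ source is absorbed in the Gr\"onwall estimate, and then invokes Theorem \ref{t5.01}; your explicit rerun of its two steps (using the closure argument of Theorem \ref{t6.02} at constant control for the lower half) is the same argument unpacked, and (b) and (c) agree with the paper step for step, as does your correct side remark that the argument of Theorem \ref{t6.02} already yields the joint sequential lower semicontinuity assumed in (c).
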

	
	\begin{proof}
		(a) For a fixed $w \in \mathcal{W}_{ad}$, set $\widehat{F}_w(t,x,u,\zeta) := F(t,x,u,\zeta)+\mathcal{B}w(t)$. The translation by the fixed term $\mathcal{B}w(t)$ preserves measurability, weak compactness, convexity, and Hausdorff distances. The growth bound becomes $\|\widehat{F}_w(t,x,u,\zeta)\|_E \leq M_F(1+\|x\|_E+\|u\|)+\|\mathcal{B}\|_{\mathcal{L}(Y,E)}\phi_W(t)$, where the additional source $\|\mathcal{B}\|\phi_W \in L^2(I,\mathbb{R}_+)$ is absorbed into the Gr\"onwall estimate behind Theorem \ref{t4.01} without affecting its structure. Hence the parameter-dependent assumptions used in Theorem \ref{t5.01} remain valid, and $(\zeta,\eta) \mapsto m_{\zeta,\eta}(w)$ is continuous. Compactness of $\mathcal{P}$ then gives attainment of both extrema in \eqref{e6.27}.
		
		(b) Let $w_n \rightharpoonup w$ in $L^2(I,Y)$. For every $(\zeta,\eta) \in \mathcal{P}$, Proposition \ref{p6.01} yields
		\begin{equation}\label{e6.28}
		m_{\zeta,\eta}(w) \leq \liminf_{n \to \infty} m_{\zeta,\eta}(w_n)
		\leq
		\liminf_{n \to \infty} \varphi_{\sup}(w_n).
		\end{equation}
		Taking the supremum in \eqref{e6.28} and using \eqref{e6.27} gives
		$\varphi_{\sup}(w) \leq \liminf_{n \to \infty} \varphi_{\sup}(w_n)$.
		
		(c) Let $w_n\rightharpoonup w$ and set
		$\underline\varphi:=\liminf_{n\to\infty}\varphi_{\inf}(w_n)$.
		By part (a) and compactness of $\mathcal P$, after passing to a subsequence we can choose $(\zeta_n,\eta_n)\in\mathcal P$ such that
		\begin{equation}\label{e6.29}
			\varphi_{\inf}(w_n)=m_{\zeta_n,\eta_n}(w_n)\to\underline\varphi,
			\qquad
			(\zeta_n,\eta_n)\to(\zeta^*,\eta^*)\in\mathcal P.
		\end{equation}
		Joint sequential lower semicontinuity and \eqref{e6.29} give
		$m_{\zeta^*,\eta^*}(w)\leq\underline\varphi$. By \eqref{e6.27},
		$\varphi_{\inf}(w)\leq m_{\zeta^*,\eta^*}(w)$, which proves the claim.
	\end{proof}
	
	We next turn to a robust optimal control formulation. Unlike Problem \ref{p6}, where the parameters are chosen cooperatively, we now interpret $(\zeta, \eta)$ as uncertain but bounded quantities and optimize the control against the worst admissible parameter realization.
	
	Let $\mathcal{P} \subset \Lambda \times \Xi$ be a nonempty compact set representing the admissible uncertainty range. The robust problem takes the form of a min-max optimization problem:
	
	\begin{problem}\label{p7}
		Find a control $w^* \in \mathcal{W}_{ad}$ such that:
		$$
		\mathcal{F}_{rob}(w^*) = \inf_{w \in \mathcal{W}_{ad}} \mathcal{F}_{rob}(w),
		$$
		where
		$$
		\mathcal{F}_{rob}(w)
		:=
		\sup_{(\zeta, \eta) \in \mathcal{P}} \mathcal{F}_{joint}(w, \zeta, \eta)
		=
		\sup_{(\zeta, \eta) \in \mathcal{P}} \bigl\{ m_{\zeta, \eta}(w) + \mathcal{R}(w) \bigr\}.
		$$
	\end{problem}
	
	Since the regularization term $\mathcal{R}$ does not depend on the parameters, the preceding expression can also be written as
	$$
	\mathcal{F}_{rob}(w)
	=
	\left(\sup_{(\zeta, \eta) \in \mathcal{P}} m_{\zeta, \eta}(w)\right) + \mathcal{R}(w).
	$$
	
	\begin{theorem}\label{t6.03}
		Suppose that the hypotheses of Proposition \ref{p6.01} hold. Assume in addition hypothesis $H(0)$. If the parameter uncertainty set $\mathcal{P} \subset \Lambda \times \Xi$ is compact, then Problem \ref{p7} admits at least one optimal control $w^* \in \mathcal{W}_{ad}$.
	\end{theorem}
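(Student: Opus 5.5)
The plan is to apply the direct method to $\mathcal{F}_{rob}=\varphi_{\sup}+\mathcal{R}$, where $\varphi_{\sup}$ is the extremal value map from \eqref{e6.27}. Proposition \ref{p6.02} supplies the key inputs, so almost all the work has already been done.

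\emph{Step 1: finiteness.} First I will check that $\mathcal{F}_{rob}$ is a proper real-valued functional on $\mathcal{W}_{ad}$.
\begin{itemize}
\item By Proposition \ref{p6.02}(a), for each $w\in\mathcal{W}_{ad}$ the map $(\zeta,\eta)\mapsto m_{\zeta,\eta}(w)$ is continuous. Hence the supremum over the compact set $\mathcal{P}$ is attained and finite.
\item Finiteness of each $m_{\zeta,\eta}(w)$ follows as in the existence proposition of Section \ref{s5}: the controlled system has nonempty solution sets, since the translated data $\widehat F_w$ satisfy the standing hypotheses, and $\mathcal{J}$ is bounded on them through the majorant built from $\beta_R$ and $\vartheta_D$.
\end{itemize}
So $\mathcal{F}_{rob}(w)=\varphi_{\sup}(w)+\mathcal{R}(w)\in\mathbb{R}$.

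\emph{Step 2: lower bound on the infimum.} I need $\inf_{\mathcal{W}_{ad}}\mathcal{F}_{rob}>-\infty$. I will argue by contradiction. Take a sequence $w_n$ with $\mathcal{F}_{rob}(w_n)\to\inf$. By weak compactness of $\mathcal{W}_{ad}$, a subsequence converges weakly. Proposition \ref{p6.02}(b) together with weak lower semicontinuity of $\mathcal{R}$ then gives $\liminf_n\mathcal{F}_{rob}(w_n)\ge \mathcal{F}_{rob}(w)>-\infty$. So no separate bound needs to be established.

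\emph{Step 3: direct method.} Let $\{w_k\}\subset\mathcal{W}_{ad}$ be a minimizing sequence.
\begin{itemize}
\item By $H(W)$, $\mathcal{W}_{ad}$ is bounded, closed and convex in the reflexive space $L^2(I,Y)$, hence sequentially weakly compact. So $w_k\rightharpoonup w^*\in\mathcal{W}_{ad}$ along a subsequence.
\item Proposition \ref{p6.02}(b) gives $\varphi_{\sup}(w^*)\le\liminf_k\varphi_{\sup}(w_k)$.
\item The convex continuous functional $\mathcal{R}$ in \eqref{e6.02} is weakly lower semicontinuous.
\item Superadditivity of $\liminf$ then yields $\mathcal{F}_{rob}(w^*)\le\liminf_k\mathcal{F}_{rob}(w_k)=\inf_{\mathcal{W}_{ad}}\mathcal{F}_{rob}$.
\item The reverse inequality holds because $w^*$ is admissible.
\end{itemize}

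\emph{Main obstacle.} The substantive ingredient is Proposition \ref{p6.02}(b), i.e.\ weak lower semicontinuity of a supremum of the maps $m_{\zeta,\eta}$. This is already established, since a pointwise supremum of lower semicontinuous functions is lower semicontinuous, and each $m_{\zeta,\eta}$ is lower semicontinuous by Proposition \ref{p6.01}, which rests on Lemma \ref{l6.01}. The one point to verify is that the hypotheses of Proposition \ref{p6.02} are met: those of Proposition \ref{p6.01}, $H(0)$, and compactness of $\mathcal{P}$. All three are assumed in the theorem, so the proof reduces to the short direct-method argument above.
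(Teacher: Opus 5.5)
Your proposal is correct and matches the paper's proof: finiteness and sequential weak lower semicontinuity of $\varphi_{\sup}$ come from Proposition~\ref{p6.02}(a)--(b), weak lower semicontinuity of $\mathcal{R}$ is added, and the direct method is run on the weakly compact set $\mathcal{W}_{ad}$. Your Step~2 is redundant, since the direct-method argument already shows the infimum is attained at some $w^*$ with $\mathcal{F}_{rob}(w^*)\in\mathbb{R}$, but it is harmless.
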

	
	\begin{proof}
		By Proposition \ref{p6.02}(a)--(b) and \eqref{e6.27}, $\varphi_{\sup}$ is finite-valued and sequentially weakly lower semicontinuous. By \eqref{e6.02}, the same is true of $\mathcal R$, and hence of $\mathcal F_{\rm rob}=\varphi_{\sup}+\mathcal R$.
		
		Let $\{w_n\}\subset\mathcal W_{ad}$ be a minimizing sequence. Weak compactness of $\mathcal W_{ad}$ gives, after passing to a subsequence,
		\begin{equation}\label{e6.30}
			\mathcal F_{\rm rob}(w_n)\to\inf_{w\in\mathcal W_{ad}}\mathcal F_{\rm rob}(w),
			\qquad
			w_n\rightharpoonup w^*\in\mathcal W_{ad}.
		\end{equation}
		Weak lower semicontinuity yields
		\begin{equation}\label{e6.31}
			\mathcal F_{\rm rob}(w^*)\leq\liminf_{n\to\infty}\mathcal F_{\rm rob}(w_n).
		\end{equation}
		By \eqref{e6.30}--\eqref{e6.31},
		$\mathcal F_{\rm rob}(w^*)\leq\inf_{\mathcal W_{ad}}\mathcal F_{\rm rob}$; the reverse inequality follows from $w^*\in\mathcal W_{ad}$. Thus $w^*$ solves Problem \ref{p7}.
	\end{proof}
	
	We finally introduce a Hurwicz-type compromise formulation, which interpolates between the optimistic design criterion and the pessimistic robust criterion. To balance these two extremes, we fix an optimism index $\alpha \in [0,1]$ and combine the best-case and worst-case marginal values over the uncertainty set.
	
	\begin{problem}\label{p8}
		Let $\alpha \in [0, 1]$ be a given optimism index. Find a compromise optimal control $w^*_\alpha \in \mathcal{W}_{ad}$ such that
		$$
		\mathcal{H}_\alpha(w^*_\alpha) = \inf_{w \in \mathcal{W}_{ad}} \mathcal{H}_\alpha(w),
		$$
		where the Hurwicz objective functional $\mathcal{H}_\alpha: \mathcal{W}_{ad} \to \mathbb{R}$ is defined as
		$$
		\mathcal{H}_\alpha(w) := \alpha \inf_{(\zeta, \eta) \in \mathcal{P}} m_{\zeta, \eta}(w) + (1-\alpha) \sup_{(\zeta, \eta) \in \mathcal{P}} m_{\zeta, \eta}(w) + \mathcal{R}(w).
		$$
	\end{problem}
	
	Since $\mathcal{R}$ does not depend on the uncertain parameters, the functional $\mathcal{H}_\alpha$ combines the optimistic marginal value $\inf_{(\zeta,\eta)\in\mathcal{P}} m_{\zeta,\eta}(w)$ and the pessimistic marginal value $\sup_{(\zeta,\eta)\in\mathcal{P}} m_{\zeta,\eta}(w)$ with the same control regularization term. In particular, the case $\alpha = 1$ recovers the iterated optimistic criterion associated with Problem \ref{p6}, while $\alpha = 0$ reduces to the robust objective from Problem \ref{p7}. Under the compactness and continuity assumptions used below, both extremal values are attained on $\mathcal{P}$ for every fixed control $w \in \mathcal{W}_{ad}$. We now establish the existence of optimal controls for this compromise problem.
	
	\begin{theorem}\label{t6.04}
		Suppose that the hypotheses of Proposition \ref{p6.01} hold. Assume in addition hypothesis $H(0)$, and assume that the map
		$$
		(w, \zeta, \eta) \mapsto m_{\zeta, \eta}(w)
		$$
		is sequentially weakly lower semicontinuous on $\mathcal{W}_{ad} \times \mathcal{P}$, where $\mathcal{W}_{ad}$ is endowed with the weak topology of $L^2(I, Y)$ and $\mathcal{P}$ carries the metric topology of $\Lambda \times \Xi$. If the parameter set $\mathcal{P} \subset \Lambda \times \Xi$ is compact, then for every optimism index $\alpha \in [0, 1]$, Problem \ref{p8} admits at least one optimal control $w^*_\alpha \in \mathcal{W}_{ad}$.
	\end{theorem}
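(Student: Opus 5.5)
We argue by the direct method on $\mathcal{W}_{ad}$, exactly as in Theorems \ref{t6.01} and \ref{t6.03}. Write $\mathcal{H}_\alpha=\alpha\varphi_{\inf}+(1-\alpha)\varphi_{\sup}+\mathcal{R}$ with $\varphi_{\inf},\varphi_{\sup}$ as in \eqref{e6.27}. The plan is to show that $\mathcal{H}_\alpha$ is finite-valued and sequentially weakly lower semicontinuous on $\mathcal{W}_{ad}$. Existence then follows from the weak sequential compactness of $\mathcal{W}_{ad}$.

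\emph{Step 1: finiteness.} By Proposition \ref{p6.02}(a), for each $w\in\mathcal{W}_{ad}$ the map $(\zeta,\eta)\mapsto m_{\zeta,\eta}(w)$ is continuous. Since $\mathcal{P}$ is compact, both $\varphi_{\inf}(w)$ and $\varphi_{\sup}(w)$ are attained and real. Finiteness of each $m_{\zeta,\eta}(w)$ comes from the common integrable majorant of $\ell$ along solutions, as in the existence proposition of Section \ref{s5}. Also $\mathcal{R}(w)\le \frac12\|\phi_W\|_{L^2}^2<\infty$. Hence $\mathcal{H}_\alpha$ is real-valued.

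\emph{Step 2: lower semicontinuity.} Proposition \ref{p6.02}(b) shows that $\varphi_{\sup}$ is sequentially weakly l.s.c. The additional joint lower semicontinuity hypothesis in the statement is precisely the assumption of Proposition \ref{p6.02}(c), so $\varphi_{\inf}$ is sequentially weakly l.s.c. as well. The functional $\mathcal{R}$ is convex and continuous on $L^2(I,Y)$, hence weakly l.s.c. The coefficients $\alpha,1-\alpha$ are nonnegative and $\liminf$ is superadditive, so $\mathcal{H}_\alpha$ is sequentially weakly l.s.c. The nonnegativity of the coefficients is the one point to check: it ensures no term enters with a minus sign, since a negative coefficient would call for an upper semicontinuity we do not have. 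For $\alpha=0$ or $\alpha=1$, one term simply drops out.

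\emph{Step 3: minimization.} The infimum $\inf_{\mathcal{W}_{ad}}\mathcal{H}_\alpha$ is finite. To see this, note that $m_{\zeta,\eta}(w)$ is bounded below uniformly in $w\in\mathcal{W}_{ad}$ and $(\zeta,\eta)\in\mathcal{P}$. Indeed, the a priori bound \eqref{e6.20}, uniform in the control thanks to $\phi_W$, together with the majorant $q_{D_0}$ built from $H(\ell)$(ii),(iv), bounds the running cost. Continuity of $\varphi$ on the resulting bounded set of terminal values bounds the terminal cost; here I would use compactness of terminal values, obtained via the relative compactness in Lemma \ref{l6.01}. Take a minimizing sequence $\{w_n\}\subset\mathcal{W}_{ad}$. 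By $H(W)$, $\mathcal{W}_{ad}$ is bounded, closed and convex in the reflexive space $L^2(I,Y)$, so a subsequence satisfies $w_n\rightharpoonup w^*_\alpha\in\mathcal{W}_{ad}$. Step 2 gives $\mathcal{H}_\alpha(w^*_\alpha)\le\liminf_n\mathcal{H}_\alpha(w_n)=\inf_{\mathcal{W}_{ad}}\mathcal{H}_\alpha$, and the reverse inequality is trivial.

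The main obstacle is the uniform lower bound in Step 3, which is needed to exclude an infimum of $-\infty$. If the compactness argument for the terminal values is awkward, I would instead argue directly by lower semicontinuity. The value $\mathcal{H}_\alpha(w^*_\alpha)$ is finite and is at most $\liminf_n \mathcal{H}_\alpha(w_n)$, which equals the infimum. This already forces the infimum to be finite and attained, so the separate uniform bound can be bypassed.
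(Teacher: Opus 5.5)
Your proposal is correct and follows the paper's argument. Both obtain weak sequential lower semicontinuity of $\varphi_{\sup}$ and $\varphi_{\inf}$ from Proposition~\ref{p6.02}(b),(c), pass it to $\mathcal{H}_\alpha$ using nonnegativity of $\alpha,1-\alpha$ and superadditivity of $\liminf$, and then apply the direct method on the weakly sequentially compact set $\mathcal{W}_{ad}$. Your closing remark, that finiteness of $\mathcal{H}_\alpha(w^*_\alpha)$ together with lower semicontinuity already rules out $\inf_{\mathcal{W}_{ad}}\mathcal{H}_\alpha=-\infty$, neatly avoids the uniform lower bound the paper appeals to; that bound, as written there, comes only from the running-cost majorant in $H(\ell)$(ii),(iv) and does not address the terminal term $\varphi$.
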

	
	\begin{proof}
		By Proposition \ref{p6.02} and \eqref{e6.27}, both $\varphi_{\inf}$ and $\varphi_{\sup}$ are finite-valued and sequentially weakly lower semicontinuous. The regularization $\mathcal R$ has the same property by \eqref{e6.02}.
		
		For $w_n\rightharpoonup w$, the marginal values are uniformly bounded below by the majorant supplied by $H(\ell)$(ii),(iv). Since $\alpha,1-\alpha\geq0$, superadditivity of the limit inferior gives
		\begin{equation}\label{e6.32}
			\begin{aligned}
				\mathcal H_\alpha(w)
				&\leq\alpha\liminf_n\varphi_{\inf}(w_n)
				+(1-\alpha)\liminf_n\varphi_{\sup}(w_n)
				+\liminf_n\mathcal R(w_n)\\
				&\leq\liminf_{n\to\infty}\mathcal H_\alpha(w_n).
			\end{aligned}
		\end{equation}
		Thus $\mathcal H_\alpha$ is sequentially weakly lower semicontinuous. If $\{w_n\}\subset\mathcal W_{ad}$ is minimizing, weak compactness gives, after passing to a subsequence,
		\begin{equation}\label{e6.33}
			\mathcal H_\alpha(w_n)\to\inf_{w\in\mathcal W_{ad}}\mathcal H_\alpha(w),
			\qquad
			w_n\rightharpoonup w_\alpha^*\in\mathcal W_{ad}.
		\end{equation}
		Equations \eqref{e6.32}--\eqref{e6.33} imply
		$\mathcal H_\alpha(w_\alpha^*)\leq\inf_{\mathcal W_{ad}}\mathcal H_\alpha$.
		The reverse inequality follows from admissibility, so $w_\alpha^*$ solves Problem \ref{p8}.
	\end{proof}

	\section{Numerical Experiments and Computational Illustrations}\label{s10}
	
	We present numerical experiments that illustrate the qualitative behavior of the coupled systems and the performance of the semi-implicit feedback discretization underlying Theorem \ref{t4.01}. Each benchmark is a fixed-parameter instance of Problem \ref{p2a}. In the normal-cone setting $B=N_C$, the resolvent step reduces to metric projection onto the current constraint set, leading to the explicit updates used below.
	
	\begin{example}[Scalar Coupled System with State-Dependent Moving Barrier]\label{ex10.01}
		Consider a one-dimensional benchmark that retains state-dependent feedback while admitting an explicit projection formula. Let $E=H=\mathbb{R}$ and $I=[0,6]$. The dynamics are
		$$
		\begin{cases}
			\dot{x}(t) = -2x(t) + \frac{1}{2}\sin(u(t)) + \cos(t), & x(0) = 0.5, \\
			\dot{u}(t) \in -N_{C(t, u(t))}(u(t)) + x(t) - u(t), & u(0) = 0.
		\end{cases}
		$$
		The state-dependent moving set is the half-line
		$$
		C(t, u) := [\beta(t, u), +\infty), \quad \text{with barrier function } \beta(t, u) = 0.5\sin(3t) + 0.4 u.
		$$
		The term $0.5\sin(3t)$ produces an oscillating boundary, whereas the affine feedback coefficient $0.4<1$ satisfies the contraction condition required by the semi-implicit scheme.
	\end{example}

	\noindent\textbf{I. Structural Verification and Baseline Simulation for Example \ref{ex10.01}}
	
	\noindent \textit{Verification of the abstract hypotheses.}
	For $B(t,u):=N_{C(t,u)}$ with $C(t,u)=[\beta(t,u),+\infty)$, direct calculation gives
	$\operatorname{dis}(B(t,u),B(s,v))\le 1.5|t-s|+0.4|u-v|$,
	so $H(\widetilde{B})$(i) holds with $\theta(t)=1.5t$ and $\lambda=0.4<1$. For $H(\widetilde{B})$(iv), a scalar boundary--interior case distinction gives
	$\langle v_1-v_2,u_1-u_2\rangle\geq0$ for $v_i\in N_{C(t,u_i)}(u_i)$; thus one may take $\varrho_D\equiv0$. Since $E=\mathbb{R}$ is finite-dimensional, $T(t)=e^{-2t}$ is compact for $t>0$, and $H(\widetilde{A})$ holds with $M=1$. The continuous perturbations $F(t,x,u)=0.5\sin(u)+\cos(t)$ and $G(t,x,u)=x-u$ satisfy $H(\widetilde{F})$ and $H(\widetilde{G})$ with $M_F=1.5$ and $M_G=1$, respectively. Finally, $u_0=0\in C(0,0)$, and the remaining conditions follow directly in finite dimensions.
	
	\noindent\textit{Semi-implicit discretization.}
	With step size $\tau = 10^{-3}$ and grid points $t_i = i\tau$, the state dependence in the sweeping set is frozen at the previous iterate:
	$$
	x_{i} = x_{i-1} + \tau \left( -2x_{i-1} + 0.5\sin(u_{i-1}) + \cos(t_{i-1}) \right),
	\qquad
	u_i = \max\!\bigl\{\, u_{i-1} + \tau (x_i - u_{i-1}),\; 0.5\sin(3t_i) + 0.4 u_{i-1}\bigr\}.
	$$
	
	Figure \ref{fig:num:ex1-reference} displays alternating contact and free-motion regimes. The trajectory follows the constraint during rising boundary phases (approximately $t=0.2$--$0.6$, $2.2$--$2.7$, and $4.2$--$4.8$) and evolves above it during the intervening intervals. The projection step enforces feasibility at every time level.
	
	\begin{figure}[htbp]
		\centering
		\includegraphics[width=0.5\textwidth]{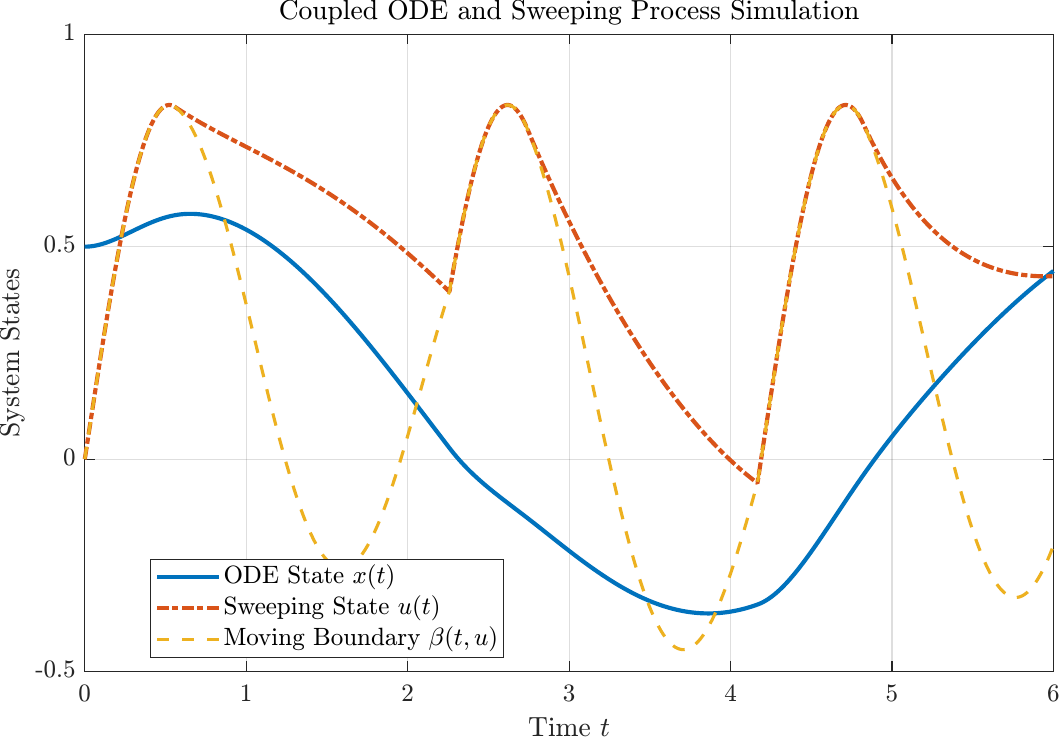}
		\caption{Baseline simulation for Example \ref{ex10.01}: the ODE state $x(t)$ (blue solid), sweeping state $u(t)$ (red dash-dotted), and moving boundary $\beta(t,u(t))$ (yellow dashed). The sweeping trajectory alternates between boundary contact and interior motion while remaining feasible.}
		\label{fig:num:ex1-reference}
	\end{figure}

	\noindent\textbf{II. Stability Under Parameter Perturbations for Example \ref{ex10.01}}
	
	To illustrate the trajectory stability associated with Theorem \ref{t4.02}, let $\epsilon_n\to0$ and consider
	\begin{equation}\label{e10.01}
		\begin{cases}
			\dot{x}_n(t) = -2x_n(t) + \frac{1}{2}\sin(u_n(t)) + \cos(t) + \zeta_n, & x_n(0) = 0.5 + \zeta_n, \\
			\dot{u}_n(t) \in -N_{C_n(t, u_n(t))}(u_n(t)) + x_n(t) - u_n(t), & u_n(0) = 0,
		\end{cases}
	\end{equation}
	with $\zeta_n=\epsilon_n$, $\eta_n=0.1\epsilon_n$, and
	$\beta_n(t,u)=0.5\sin(3t)+(0.4+\eta_n)u$. The resulting feedback coefficient
	$L_n=0.4+\eta_n$ remains below $1$ for all tested perturbations. We measure the relative trajectory deviation by
	$$
	E_n := \frac{\|x_n - x\|_{C(I)} + \|u_n - u\|_{C(I)}}{\|x\|_{C(I)} + \|u\|_{C(I)}}.
	$$
	
	In Figure \ref{fig:num:ex1-stability}, the log--log plot of $E_n$ has slope close to one, consistent with an approximately first-order response along the selected perturbation path. The other panels show the perturbed trajectories approaching the nominal solution as $\epsilon$ decreases through $\{0.15,0.08,0.02\}$.
	
	\begin{figure}[htbp]
		\centering
		\includegraphics[width=\textwidth]{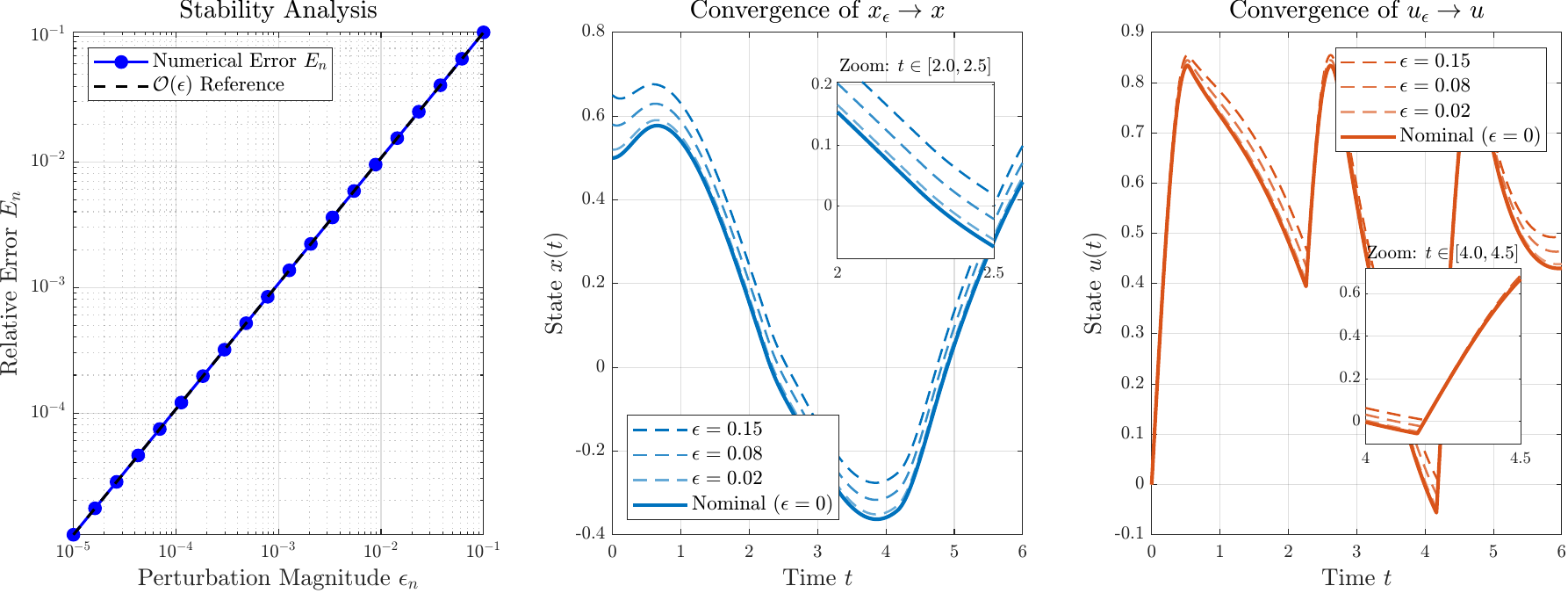}
		\caption{Stability experiment for Example \ref{ex10.01}. \textbf{Left:} Relative error $E_n$ versus $\epsilon_n\in[10^{-5},10^{-1}]$ on log--log axes, together with a reference line of slope $1$. \textbf{Center and right:} Perturbed states for $\epsilon\in\{0.15,0.08,0.02\}$ and the nominal trajectory $\epsilon=0$; the insets show deviations near active constraint intervals.}
		\label{fig:num:ex1-stability}
	\end{figure}
	
	\noindent\textbf{III. Sensitivity of the Trajectory Cost for Example \ref{ex10.01}}
	
	For the same perturbation family, consider the discounted Bolza cost
	$$
	\mathcal{J}(x, u) = \int_0^6 e^{-0.5 t} \bigl( \tfrac{1}{2}|x(t)|^2 + \tfrac{1}{2}|u(t)|^2 \bigr) dt + \tfrac{1}{2}|x(6)|^2.
	$$
	The nominal value is $V_{\mathrm{tr}}(0)\approx0.6357$. The observed discrepancy
	$\Delta V_\epsilon:=|V_{\mathrm{tr}}(\epsilon)-V_{\mathrm{tr}}(0)|$ scales approximately as $\mathcal O(\epsilon)$, consistent with the transfer of trajectory stability to the cost.
	
\noindent\textbf{IV. Fixed-Parameter Control Experiment for Example \ref{ex10.01}}
	
	We introduce a bounded scalar control $w(t)\in[-1,1]$ in the $x$-equation and minimize
	\begin{equation}\label{e10.02}
		\mathcal{J}(w) := \frac{1}{2} \int_0^6 \left( |x(t)|^2 + |u(t)|^2 + 0.1 |w(t)|^2 \right) dt + \frac{1}{2} \left( |x(6)|^2 + |u(6)|^2 \right).
	\end{equation}
	Figure \ref{fig:num:ex1-control} shows the computed control and state response. The control remains within $[-1,1]$ and primarily attenuates $x(t)$, with a smaller effect on $u(t)$. The projected trajectory remains feasible. The objective decreases from $J_{\mathrm{nom}}\approx1.5293$ to $J^*\approx1.0121$, a reduction of approximately $33.82\%$.
	
	\begin{figure}[htbp]
		\centering
		\includegraphics[width=\textwidth]{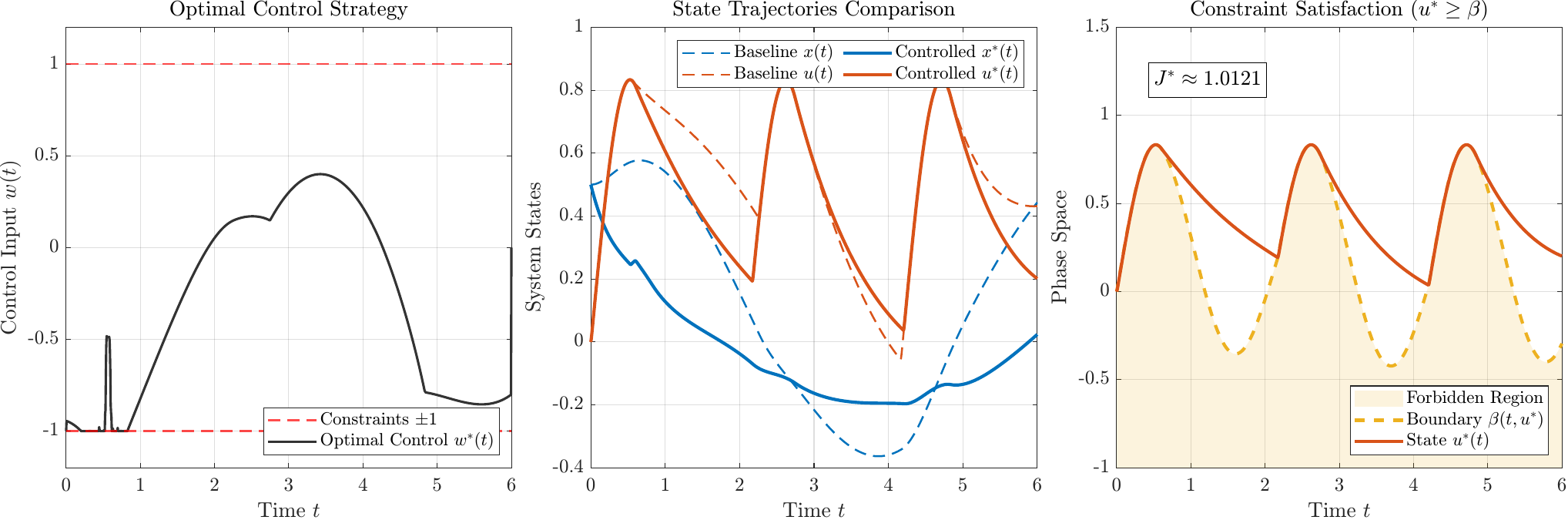}
		\caption{Fixed-parameter control for Example \ref{ex10.01}. \textbf{Left:} Admissible control $w^*(t)$. \textbf{Center:} Uncontrolled (dashed) and controlled (solid) trajectories. \textbf{Right:} Feasibility of $u^*(t)$ relative to $\beta(t,u^*(t))$; the computed cost is $J^*\approx1.0121$.}
		\label{fig:num:ex1-control}
	\end{figure}
	
\noindent\textbf{V. Joint Design--Control Optimization for Example \ref{ex10.01}}
	
	We next optimize $w(t)$ jointly with
	$(\zeta,\eta)\in\mathcal P:=[0,0.25]\times[-0.15,0.15]$ subject to
	\begin{equation}\label{e10.03}
		\begin{cases}
			\dot{x}(t) = -2x(t) + \frac{1}{2}\sin(u(t)) + \cos(t) + w(t) + \zeta, \\
			\dot{u}(t) \in -N_{C_\eta(t, u(t))}(u(t)) + x(t) - u(t),
		\end{cases}
	\end{equation}
	where $C_\eta(t,u)=[\beta_\eta(t,u),+\infty)$ and
	$\beta_\eta(t,u)=0.5\sin(3t)+(0.4+\eta)u$. The computed solution in Figure \ref{fig:num:ex1-joint} has $\zeta^*\approx0$ and $\eta^*\approx-0.15$. The objective decreases from $J_{\mathrm{nom}}\approx1.5283$ to $J^*\approx0.6945$, a reduction of approximately $54.55\%$. Because $\zeta^*$ remains near zero, most of the improvement is associated with the boundary-feedback parameter $\eta$.
	
	\begin{figure}[htbp]
		\centering
		\includegraphics[width=\textwidth]{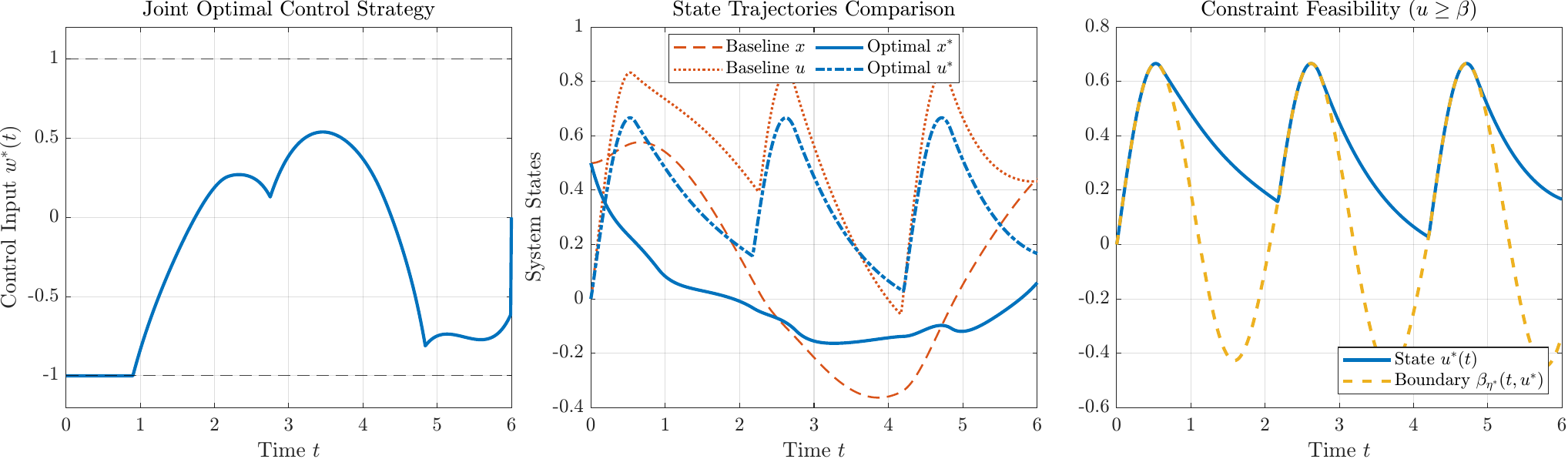}
		\caption{Joint design--control experiment for Example \ref{ex10.01}. \textbf{Left:} Optimized control. \textbf{Center:} Nominal and jointly optimized trajectories. \textbf{Right:} Feasibility relative to $\beta_{\eta^*}(t,u^*)$. The computed values are $\zeta^*\approx0$, $\eta^*\approx-0.15$, and $J^*\approx0.6945$.}
		\label{fig:num:ex1-joint}
	\end{figure}
	
	\noindent\textbf{VI. Robust Min--Max Optimal Control for Example \ref{ex10.01}}
	
	For the dynamics \eqref{e10.03} and uncertainty set
	$\mathcal P=[0,0.25]\times[-0.15,0.15]$, the computed inner maximizer is
	$(\zeta_{\mathrm{wc}},\eta_{\mathrm{wc}})=(0.25,0.15)$. At this adverse pair, the nominal-controller cost is $J_{\mathrm{nom,wc}}\approx1.8726$, whereas the robust controller gives $J_{\mathrm{rob,wc}}\approx1.7718$, an improvement of approximately $5.38\%$. Figure \ref{fig:num:ex1-robust} also displays the price-of-robustness trade-off: the cost curves cross near $\zeta=0.06$--$0.07$, after which the robust controller performs better. The principal trajectory-level change under the adverse scenario is the attenuation of $x(t)$.
	
	\begin{figure}[htbp]
		\centering
		\includegraphics[width=\textwidth]{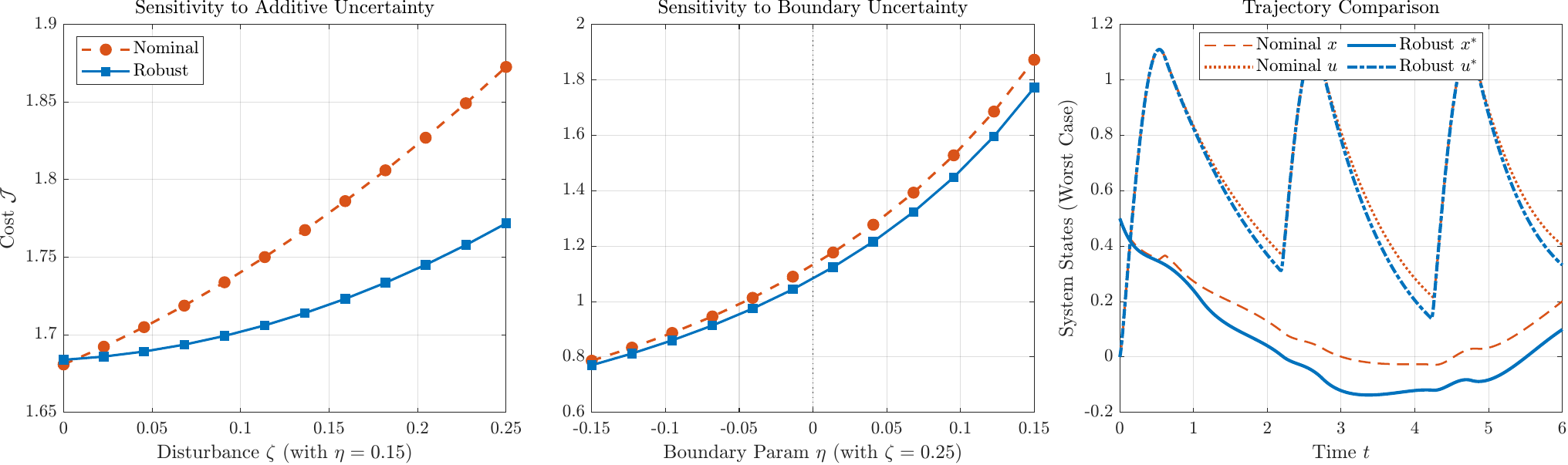}
		\caption{Robust-control experiment for Example \ref{ex10.01}. \textbf{Left:} Cost versus $\zeta$ at $\eta=0.15$. \textbf{Center:} Cost versus $\eta$ at $\zeta=0.25$. \textbf{Right:} Trajectories for the computed adverse pair $(0.25,0.15)$. Robust optimization reduces the adverse cost from approximately $1.8726$ to $1.7718$.}
		\label{fig:num:ex1-robust}
	\end{figure}
	
	\noindent\textbf{VII. Hurwicz Compromise Control for Example \ref{ex10.01}}
	
	Finally, for $\alpha\in\{0,0.1,0.5,0.9,1\}$, we minimize
	$$
	\mathcal H_\alpha(w)
	:=\alpha\inf_{(\zeta,\eta)\in\mathcal P}\mathcal J(w,\zeta,\eta)
	+(1-\alpha)\sup_{(\zeta,\eta)\in\mathcal P}\mathcal J(w,\zeta,\eta).
	$$
	Figure \ref{fig:num:ex1-hurwicz} shows that the best-case cost decreases from approximately $0.7967$ at $\alpha=0$ to $0.6983$ at $\alpha=1$, while the worst-case cost increases from approximately $1.7822$ to $1.9320$. The control profiles vary smoothly between the conservative and optimistic endpoints, with the largest visible change occurring between the balanced and highly optimistic cases.
	
	\begin{figure}[htbp]
		\centering
		\includegraphics[width=\textwidth]{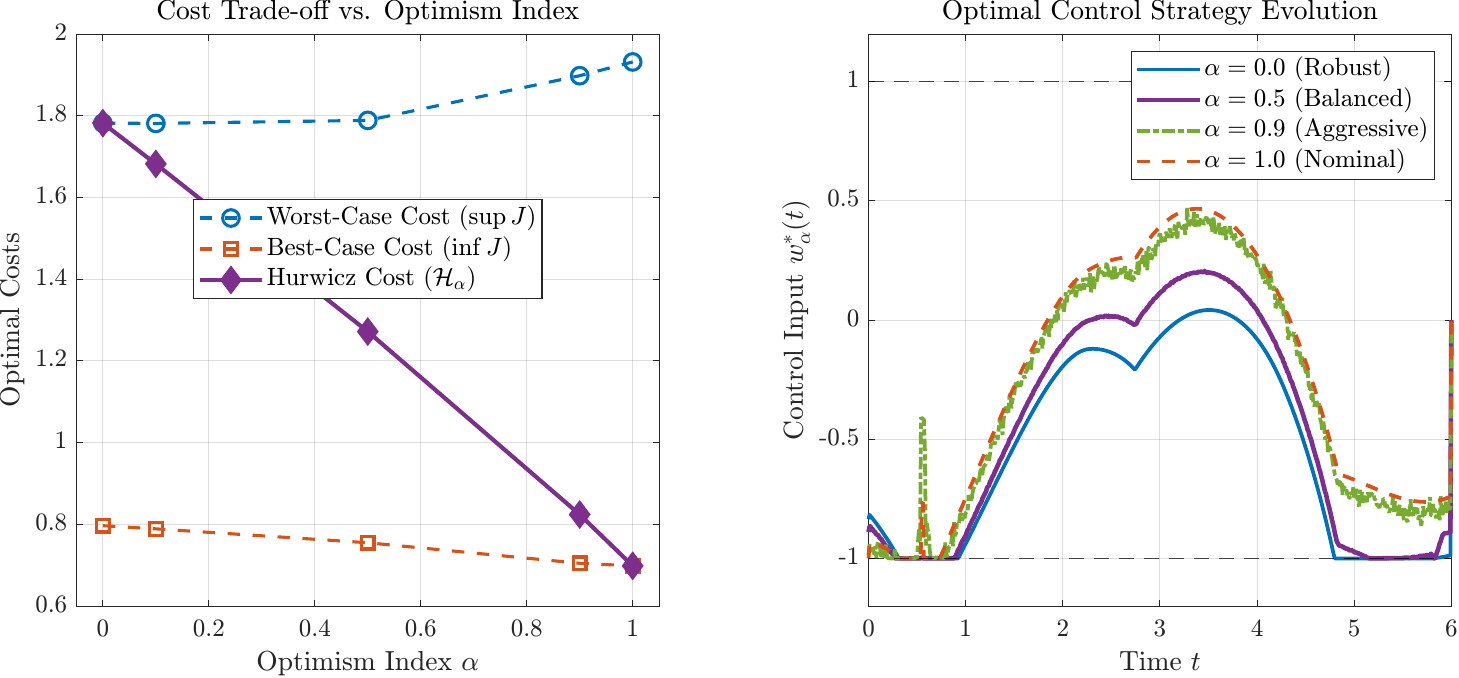}
		\caption{Hurwicz-control experiment for Example \ref{ex10.01}. \textbf{Left:} Best-case, worst-case, and Hurwicz values for $\alpha\in\{0,0.1,0.5,0.9,1\}$. \textbf{Right:} Controls at representative optimism levels; the profiles for $\alpha=0.9$ and $\alpha=1$ are nearly indistinguishable.}
		\label{fig:num:ex1-hurwicz}
	\end{figure}
	
	\begin{example}[Multidimensional Coupled System with State-Dependent Ball Constraint]\label{ex10.02}
		Consider a multidimensional system with $E=\mathbb{R}^2$, $H=\mathbb{R}^3$, and $I=[0,6]$:
		$$
		\begin{cases}
			\dot{x}(t) = A x(t) + F(t, x(t), u(t)), & x(0) = [1, -1]^\top, \\
			\dot{u}(t) \in -N_{C(t, u(t))}(u(t)) + G(t, x(t), u(t)), & u(0) = [0, 0, 0]^\top.
		\end{cases}
		$$
		The matrix $A$ and coupling terms $F$ and $G$ are
		$$
		A = \begin{pmatrix} -1 & 1 \\ -1 & -1 \end{pmatrix}, \quad 
		F(t, x, u) = \begin{pmatrix} 0.5 u_1 + 0.1 x_2 \cos(t) \\ 0.5 u_2 - 0.1 x_1 \sin(t) \end{pmatrix}, \quad 	G(t, x, u) = \begin{pmatrix} x_1 - 0.2 u_1 \\ x_2 - 0.2 u_2 \\ -5 + 0.1 u_3 \end{pmatrix}.
		$$
		The moving set is the closed unit ball centered at $\gamma(t,u)$:
		$$
		C(t, u) := \overline{\mathbb{B}}(\gamma(t, u), 1) = \left\{ v \in \mathbb{R}^3 \mid \|v - \gamma(t, u)\| \le 1 \right\},
		$$
		where
		$$
		\gamma(t, u) = \begin{pmatrix} \sin(t) \\ \cos(t) \\ t/2 \end{pmatrix} + 0.3 \begin{pmatrix} u_2 \\ u_3 \\ u_1 \end{pmatrix}.
		$$
	\end{example}
	
	\noindent\textbf{I. Structural Verification and Baseline Simulation for Example \ref{ex10.02}}
	
	\noindent \textit{Verification of the abstract hypotheses.}
	Write $\gamma(t,u)=\alpha(t)+0.3Pu$, where
	$\alpha(t)=(\sin t,\cos t,t/2)^\top$ and $P$ is the cyclic permutation matrix. For $B(t,u):=N_{C(t,u)}$, one obtains
	$\operatorname{dis}(B(t,u),B(s,v))\le 1.2|t-s|+0.3\|u-v\|$,
	so $H(\widetilde{B})$(i) holds with $\theta(t)=1.2t$ and $\lambda=0.3<1$. The standard hypomonotonicity estimate for normal cones to Lipschitz translations of a ball gives
	$\langle v_1-v_2,u_1-u_2\rangle\geq-\varrho_D(t)\|u_1-u_2\|^2$
	on bounded trajectory sets for a suitable $\varrho_D\in L^1$, which verifies $H(\widetilde{B})$(iv). The semigroup generated by $A$ is $T(t)=e^{-t}R(t)$, with $R(t)$ a rotation matrix, and satisfies $H(\widetilde{A})$ with $M=1$. The continuous maps $F$ and $G$ have linear growth, so $H(\widetilde{F})$ and $H(\widetilde{G})$ hold. Finally, $\|u_0-\gamma(0,0)\|=1$, and the remaining assumptions follow directly in finite dimensions.
	
	\noindent\textit{Semi-implicit discretization.}
	With $\tau=10^{-3}$, the update is
	$$
	\begin{aligned}
		x_i&=x_{i-1}+\tau\bigl(Ax_{i-1}+F(t_{i-1},x_{i-1},u_{i-1})\bigr),\\
		\widetilde u_i&=u_{i-1}+\tau G(t_{i-1},x_i,u_{i-1}),\\
		u_i&=\operatorname{proj}_{\overline{\mathbb B}(\gamma(t_i,u_{i-1}),1)}
		(\widetilde u_i).
	\end{aligned}
	$$
	
	Figure \ref{fig:num:ex2-reference} shows bounded oscillations in $x_1$ and $x_2$, consistent with the dissipative matrix $A$. The sweeping components respond more strongly to the moving geometry. In the three-dimensional view, $u(t)$ tracks the translating ball and moves upward toward the terminal feasible set.
	
	\begin{figure}[htbp]
		\centering
		\begin{minipage}[c]{0.48\linewidth}
			\centering
			\includegraphics[width=\linewidth]{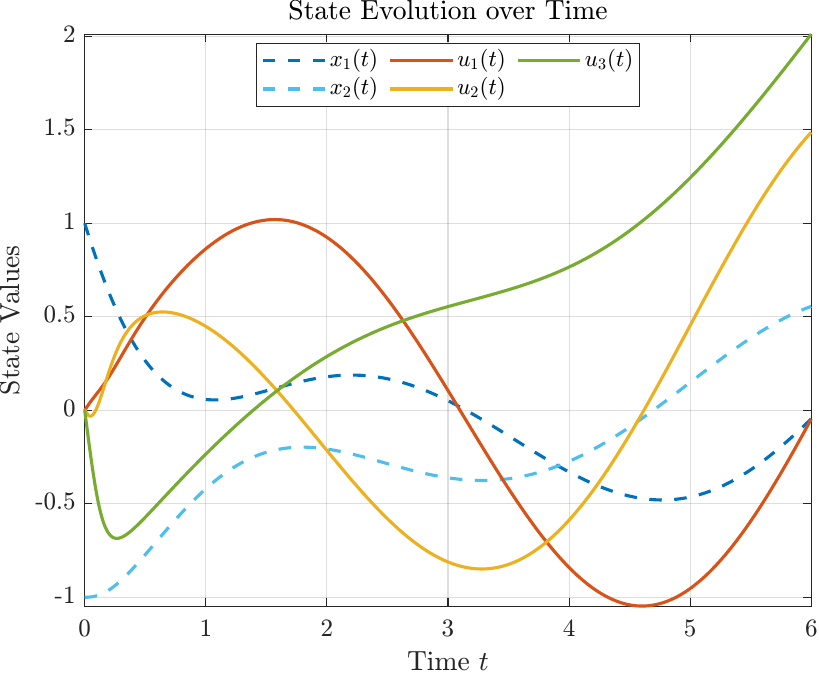}
		\end{minipage}
		\hfill
		\begin{minipage}[c]{0.48\linewidth}
			\centering
			\includegraphics[width=0.6\linewidth]{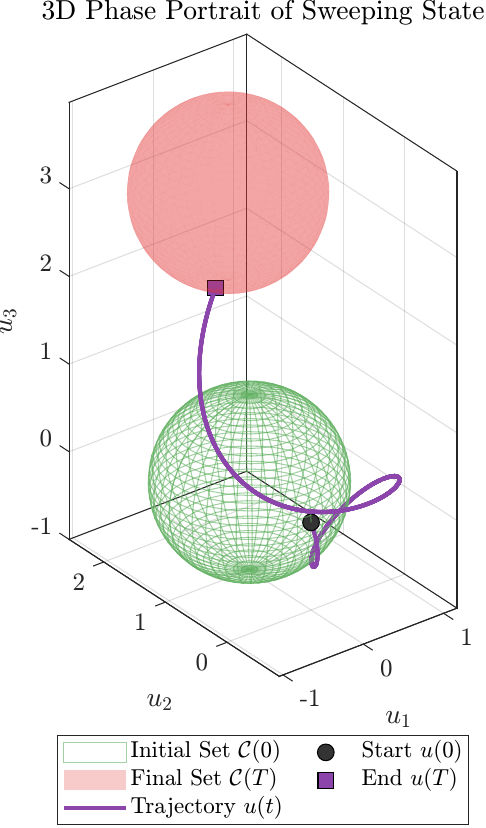}
		\end{minipage}
		\caption{Baseline simulation for Example \ref{ex10.02}. \textbf{Left:} ODE states $x_1,x_2$ (dashed) and sweeping states $u_1,u_2,u_3$ (solid). \textbf{Right:} Sweeping trajectory $u(t)$, initial ball $C(0,u(0))$, terminal ball $C(T,u(T))$, and endpoint markers.}
		\label{fig:num:ex2-reference}
	\end{figure}

	\noindent\textbf{II. Stability Under Coupled Perturbations for Example \ref{ex10.02}}
	
	We replace $A$ by $A_n=A+\epsilon_n I_{2\times2}$ and the center by
	$\gamma_n(t,u)=(\sin((1+\epsilon_n)t),\cos((1+\epsilon_n)t),t/2)^\top+0.3Pu$,
	thereby perturbing both the ODE and the moving geometry. Since
	$T_n(t)=e^{\epsilon_n t}T(t)$,
	\[
	\|T_n(t)-T(t)\|_{\mathcal L(\mathbb R^2)}
	=(e^{\epsilon_n t}-1)\|T(t)\|\to0
	\]
	uniformly on $I$. Thus the perturbed family satisfies the required semigroup continuity. In Figure \ref{fig:num:ex2-stability}, the log--log error curve has slope close to one for $\epsilon_n\in[10^{-4},10^{-1}]$, and all five state components approach the nominal solution as $\epsilon$ decreases.
	
	\begin{figure}[htbp]
		\centering
		\includegraphics[width=\textwidth]{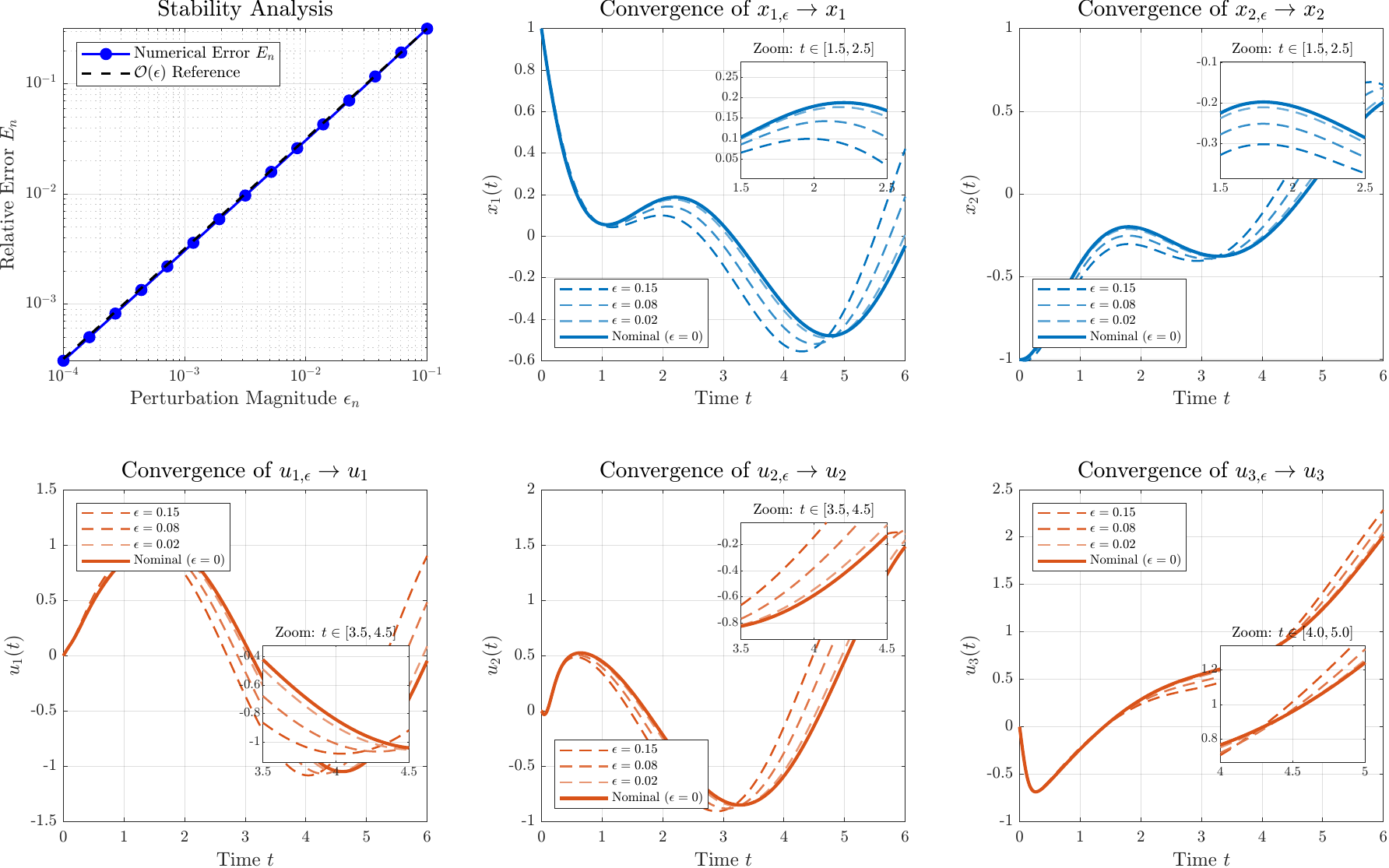}
		\caption{Stability experiment for Example \ref{ex10.02}. \textbf{Top left:} Relative error $E_n$ versus $\epsilon_n\in[10^{-4},10^{-1}]$ on log--log axes, with a slope-one reference. \textbf{Other panels:} Nominal and perturbed components for $\epsilon\in\{0.15,0.08,0.02\}$; the insets show intervals of rapid variation.}
		\label{fig:num:ex2-stability}
	\end{figure}
	
	\noindent\textbf{III. Sensitivity of the Trajectory Cost for Example \ref{ex10.02}}
	
	For the same perturbation family, consider
	\[
	\mathcal J(x,u)
	=\int_0^6e^{-0.5t}\left(\tfrac12\|x(t)\|^2+\tfrac12\|u(t)\|^2\right)dt
	+\tfrac12\|x(6)\|^2.
	\]
	The nominal value is $V_{\mathrm{tr}}(0)\approx1.5680$, and
	$\Delta V_\epsilon$ scales approximately as $\mathcal O(\epsilon)$. This behavior is consistent with the trajectory stability observed above.
	
	\noindent\textbf{IV. Fixed-Parameter Control for Example \ref{ex10.02}}
	
	We introduce a vector control $w(t)\in[-1,1]^2$ in the $x$-equation and minimize
	\begin{equation}\label{e10.04}
		\mathcal{J}(w) := \frac{1}{2} \int_0^6 \left( \|x(t)\|^2 + \|u(t)\|^2 + 0.1 \|w(t)\|^2 \right) dt + \frac{1}{2} \left( \|x(6)\|^2 + \|u(6)\|^2 \right).
	\end{equation}
	Figure \ref{fig:num:ex2-control} shows the computed control and trajectories. The objective decreases from $J_{\mathrm{nom}}\approx9.1690$ to $J^*\approx8.1512$, a reduction of approximately $11.10\%$. Most of the improvement comes from reducing $\|x(t)\|$; the effect on $\|u(t)\|$ is indirect through the coupling.
	
	\begin{figure}[htbp]
		\centering
		\includegraphics[width=\textwidth]{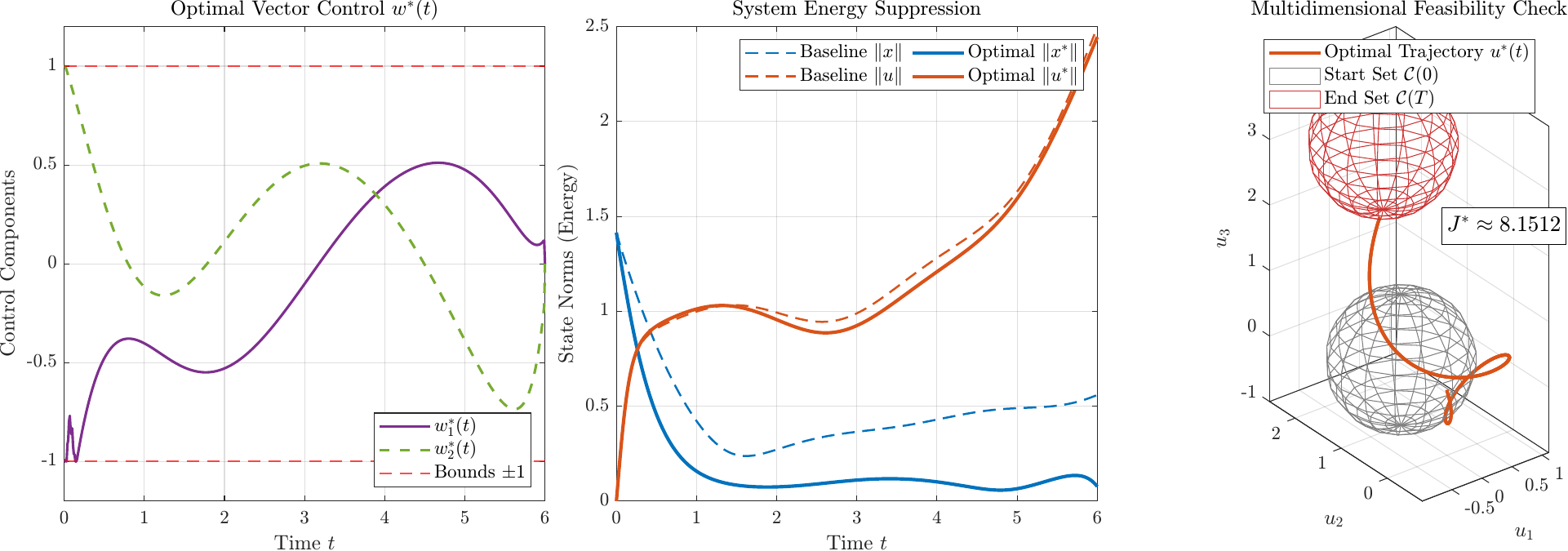}
		\caption{Fixed-parameter control for Example \ref{ex10.02}. \textbf{Left:} Control components under box constraints. \textbf{Center:} Uncontrolled (dashed) and controlled (solid) state norms. \textbf{Right:} Controlled sweeping trajectory with the initial and terminal balls; $J^*\approx8.1512$.}
		\label{fig:num:ex2-control}
	\end{figure}
	
	\noindent\textbf{V. Joint Design--Control Optimization in a Reduced Vector Setting}
	
	For a computationally tractable joint-design experiment, we use a two-dimensional componentwise extension of the scalar benchmark. Here
	$x(t),u(t),w(t)\in\mathbb R^2$, the design variables satisfy
	$\zeta\in[0,0.25]$ and $\eta\in[-0.15,0.15]$, and $e=(1,1)^\top$. The dynamics are
	\begin{equation}\label{e10.05}
		\begin{cases}
			\dot{x}(t) = -2x(t) + \frac{1}{2}\sin(u(t)) + \cos(t)e + w(t) + \zeta e, \\
			\dot{u}(t) \in -N_{C_\eta(t, u(t))}(u(t)) + x(t) - u(t), \\
			x(0) = (1,-1)^\top,\qquad u(0) = (0,0)^\top,
		\end{cases}
	\end{equation}
	where the sine is applied componentwise and
	\[
	C_\eta(t,u)=\prod_{i=1}^2[\beta_{\eta,i}(t,u),+\infty),\qquad
	\beta_\eta(t,u)=0.5\sin(3t)e+(0.4+\eta)u.
	\]
	The controls satisfy $|w_i(t)|\leq1$ a.e., and
	$\mathcal P=[0,0.25]\times[-0.15,0.15]$.
	
	We minimize \eqref{e10.04}, interpreted with Euclidean norms on $\mathbb R^2$, by optimizing the open-loop control samples and $(\zeta,\eta)$ directly in finite dimensions. Relative to $(w,\zeta,\eta)=(0,0,0)$, the computed solution reduces the objective from $J_{\mathrm{nom}}\approx3.0544$ to $J^*\approx1.4798$, or approximately $51.55\%$, and selects
	$\zeta^*\approx0$ and $\eta^*=-0.15$. Thus the boundary-feedback parameter accounts for most of the design improvement, whereas the additive shift remains near zero. Figure \ref{fig:num:ex2-joint} shows a marked reduction in $\|x(t)\|$ while both sweeping components remain feasible and contact their lower barriers during active phases.
	
	\begin{figure}[htbp]
		\centering
		\includegraphics[width=\textwidth]{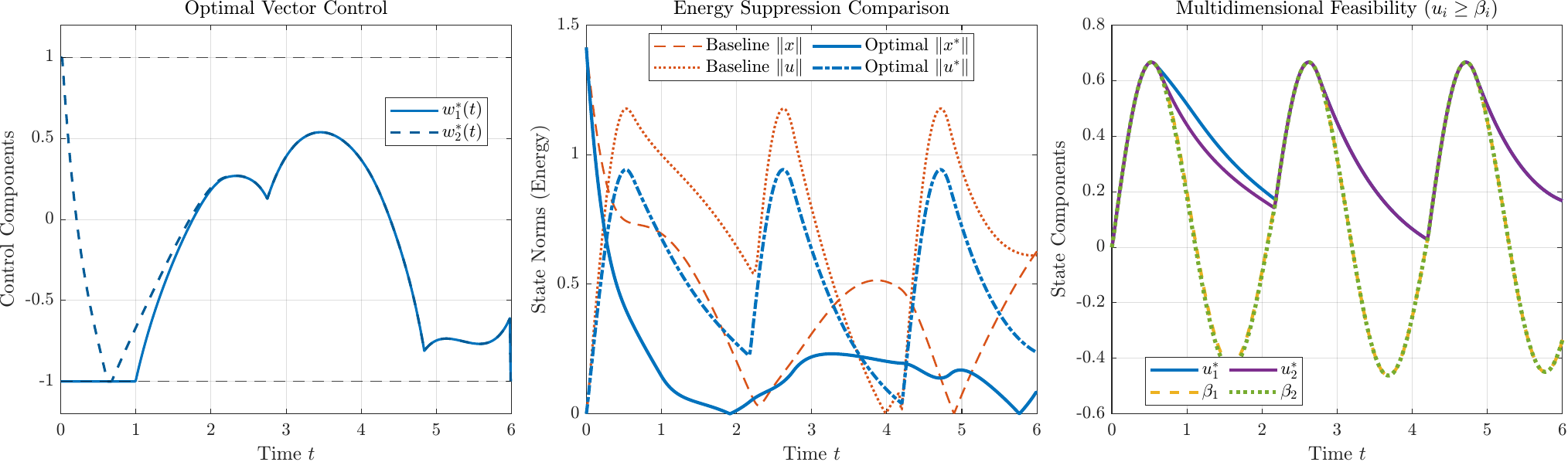}
		\caption{Joint design--control experiment for the reduced vector benchmark. \textbf{Left:} Optimized control components. \textbf{Center:} Uncontrolled and optimized state norms. \textbf{Right:} Sweeping components and their moving lower barriers. The computed values are $J_{\mathrm{nom}}\approx3.0544$, $J^*\approx1.4798$, $\zeta^*\approx0$, and $\eta^*=-0.15$.}
		\label{fig:num:ex2-joint}
	\end{figure}
	
	\noindent\textbf{VI. Robust Min--Max Control for Example \ref{ex10.02}}
	
	We return to the ball-constrained model and introduce additive and geometric uncertainty:
	\[
	\begin{aligned}
		\dot x(t)&=Ax(t)+F(t,x(t),u(t))+w(t)+\zeta(1,1)^\top,\\
		\dot u(t)&\in-N_{C_\eta(t,u(t))}(u(t))+G(t,x(t),u(t)),\\
		C_\eta(t,u)&=\overline{\mathbb B}(\gamma_\eta(t,u),1),\\
		\gamma_\eta(t,u)&=
		\bigl(\sin((1+\eta)t),\cos((1+\eta)t),t/2\bigr)^\top+0.3Pu.
	\end{aligned}
	\]
	The uncertainty set is $[0,0.5]\times[-0.2,0.2]$.
	
	Figure \ref{fig:num:ex2-robust} reports a direct discrete min--max computation with a blockwise-constant open-loop control. The computed inner maximizer is
	$(\zeta_{\mathrm{wc}},\eta_{\mathrm{wc}})=(0,0.2)$. At this pair, the nominal-controller cost is $J_{\mathrm{nom,wc}}\approx11.1412$, whereas the robust controller gives $J_{\mathrm{rob,wc}}\approx10.5764$, a reduction of approximately $5.07\%$. The cost curves cross near $\zeta=0.03$--$0.04$, after which the robust controller performs better. In this benchmark, the adverse response is governed mainly by the geometric parameter $\eta$, and the principal trajectory-level improvement is a reduction in $\|x(t)\|$.
	
	\begin{figure}[htbp]
		\centering
		\includegraphics[width=\textwidth]{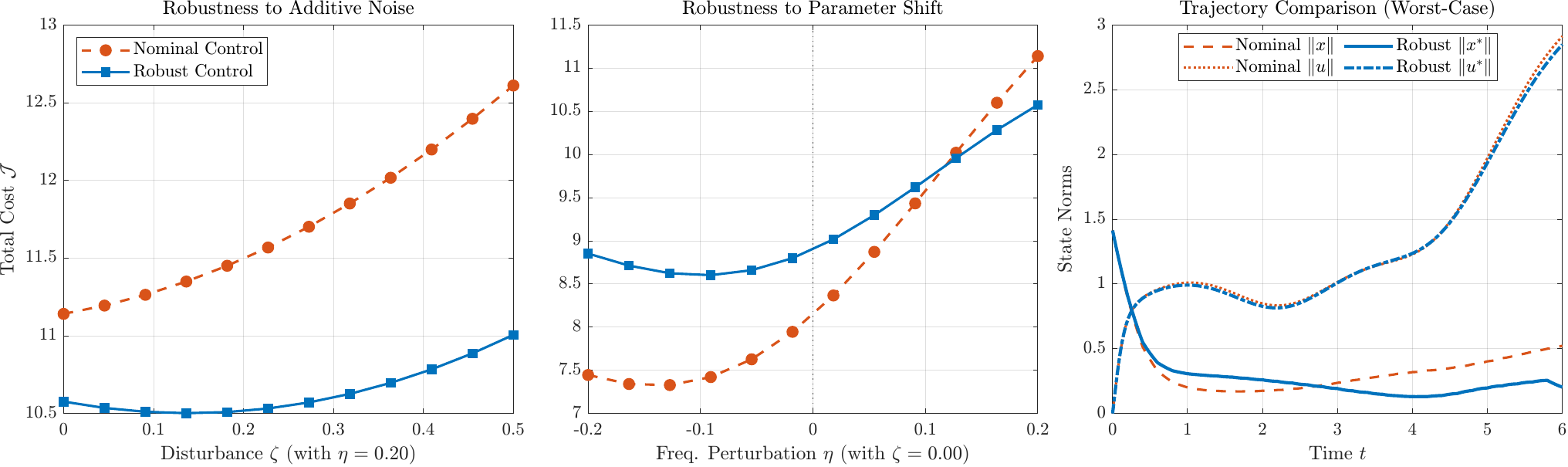}
		\caption{Robust-control experiment for the ball-constrained system. \textbf{Left:} Cost versus $\zeta$ at $\eta=0.2$. \textbf{Center:} Cost versus $\eta$ at $\zeta=0$. \textbf{Right:} State norms for the computed adverse pair $(0,0.2)$. Robust optimization reduces the cost from approximately $11.1412$ to $10.5764$.}
		\label{fig:num:ex2-robust}
	\end{figure}

	\section{Conclusion and Outlook}\label{s11}
	
	This paper investigated sensitivity and optimal control for parameter-dependent coupled evolution inclusions governed by state-dependent maximal monotone operators. The semi-implicit feedback construction underlying the analysis provides a common basis for the existence, compactness, and approximation arguments used throughout the paper. The main contributions can be summarized as follows.
	
	\begin{enumerate}[{\rm (i)}]
		\item \textbf{Analytical foundations and parameter stability.} The analysis builds on the existence, compactness, and Painlev\'e--Kuratowski continuity results established in the companion paper \cite{zengduwang2026submitted}. These properties provide the compactness and stability mechanisms required for the optimization results developed here.
		
		\item \textbf{Sensitivity of optimal values and solutions.} For Bolza-type objectives, we established the existence of optimal pairs, continuity of the value function, and upper semicontinuity of the optimal solution map. Thus, parameter stability is obtained at the levels of both feasible trajectories and optimal responses.
		
		\item \textbf{Control under multiple decision criteria.} After introducing an external control, we proved existence results for fixed-parameter optimal control, joint design--control optimization, robust min--max control, and Hurwicz-type compromise control. The latter places optimistic and pessimistic criteria within a single framework based on the weak lower semicontinuity of marginal value functions.
		
		\item \textbf{Numerical evidence.} The scalar and multidimensional sweeping benchmarks illustrate alternating active and inactive constraint regimes, the empirical stability of trajectories and costs under parameter perturbations, and the behavior of the four control formulations.
	\end{enumerate}
	
	Several questions remain open.
	
	\begin{enumerate}[{\rm (i)}]
		\item \textbf{Quantitative stability.} The present analysis yields Painlev\'e--Kuratowski continuity of the solution sets. Bounded-Hausdorff or Attouch--Wets estimates could strengthen this qualitative result and provide explicit perturbation bounds for trajectories, optimal values, and optimal solution sets.
		
		\item \textbf{Nonsmooth optimality conditions and higher-order sensitivity.} Because the robust and Hurwicz formulations involve infimal and supremal marginal functions, existence alone does not characterize their optimizers. Further analysis should develop generalized subdifferential representations and Danskin-type formulas for the marginal costs, leading to first- and second-order optimality conditions.
		
		\item \textbf{Discretization error and broader models.} Although the experiments in Section \ref{s10} support the numerical effectiveness of the semi-implicit feedback scheme, convergence rates for the fully discrete method remain to be established. Other directions include second-order sweeping processes, more general state-dependent constraints, and systems coupled with partial differential equations.
	\end{enumerate}

	\bibliographystyle{siam}
	\bibliography{refs}

\end{document}